\long\def\symbolfootnote[#1]#2{\begingroup%
\def\thefootnote{\fnsymbol{footnote}}\footnote[#1]{#2}\endgroup}
\newtheoremstyle{remark}
  {}{}{}{}{\bfseries}{.}{.5em}{{\thmname{#1 }}{\thmnumber{#2}}{\thmnote{ (#3)}}}
\DeclareMathOperator\dist{dist}
\newtheorem*{theorem*}{Theorem}
\newtheorem{theorem}{Theorem}[section]
\newtheorem{lem}[theorem]{Lemma}
\newtheorem{cor}[theorem]{Corollary}
\newtheorem{prop}[theorem]{Proposition}
\theoremstyle{definition}
\newtheorem{defn}[theorem]{Definition}
\theoremstyle{remark}
\newtheorem{rem}[theorem]{Remark}
\def\uz{{u_{z}}}
\def\ub{{u_{\overline{z}}}}
\def\epuz{{u^{\epsilon}_{z}}}
\def\epub{{u^{\epsilon}_{\overline{z}}}}
\renewcommand{\bar}[1]{\overline{#1}} 
\def\baruz{\overline{{u_{z}}}}
\def\ub{{u_{\overline{z}}}}
\def\barub{\overline{{u_{\overline{z}}}}}
\def\zbar{{\overline{z}}}
\def\utet{{u_{\theta}}}
\def\utetb{{u_{\overline{\theta}}}}
\def\ubtet{{{\overline u}_{\theta}}}
\def\ubtetb{{{\overline u}_{\overline{\theta}}}}
\def\tbar{{\overline{\theta}}}
\def\tilu{\tilde{u}}
\def\tilU{\tilde{U}}
\def\utt{{u_{\theta\theta}}}
\def\uttb{{u_{\theta \tbar}}}
\def\utbt{{u_{\tbar \theta}}}
\def\utbtb{{u_{\tbar \tbar}}}
\def\ups{{u^{\epsilon}}}
\def\dups{{Du^{\epsilon}}}
\def\vps{{v^{\epsilon}}}
\def\zinv{{z^{{\rm inv}}}}
\def\vint{\mathop{\mathchoice%
          {\setbox0\hbox{$\displaystyle\intop$}\kern 0.22\wd0%
           \vcenter{\hrule width 0.6\wd0}\kern -0.82\wd0}%
          {\setbox0\hbox{$\textstyle\intop$}\kern 0.2\wd0%
           \vcenter{\hrule width 0.6\wd0}\kern -0.8\wd0}%
          {\setbox0\hbox{$\scriptstyle\intop$}\kern 0.2\wd0%
           \vcenter{\hrule width 0.6\wd0}\kern -0.8\wd0}%
          {\setbox0\hbox{$\scriptscriptstyle\intop$}\kern 0.2\wd0%
           \vcenter{\hrule width 0.6\wd0}\kern -0.8\wd0}}%
          \mathopen{}\int}
\def\bdy {{\partial}}
\newcommand{\Sob}{{W}^{1,p}}
\newcommand{\Sobzero}{{W}^{1,p}_{0}}
\newcommand{\Om}{\Omega}
\newcommand{\C}{\mathbb{C}}
\newcommand{\R}{\mathbb{R}}
\newcommand{\ep}{\epsilon}
\newcommand{\diam}{{\rm diam}}
\newcommand{\bz}{\bar{z}}
\newcommand{\covers}{C}
\newcommand{\rr}{\mathbb{R}}
\newcommand{\cc}{\mathbb{C}}
\newcommand{\bt}{\bar{\theta}}
\newcommand{\ut}{u_\theta}
\def\utb{{u_{\overline{\theta}}}}
\newcommand{\asc}{\mathcal{A}}
\newcommand{\bsc}{\mathcal{B}}
\newcommand{\lsc}{\mathcal{L}}
\renewcommand{\Re}{\operatorname{Re}}
\renewcommand{\Im}{\operatorname{Im}}
\definecolor{blau}{rgb}{0.1,0.0,0.9}
\definecolor{violet}{rgb}{0.54, 0.17, 0.89}
\newcommand{\blue}{\color{blau}}
\newcommand{\kom}[1]{}
\renewcommand{\kom}[1]{{\bf \blue /#1/}}
\newcounter{komcounter}
\numberwithin{komcounter}{section}
\begin{document}

\frenchspacing
\allowdisplaybreaks

\pagestyle{headings}
\title[The Rad\'o--Kneser--Choquet theorem for  $p$-harmonic mappings
between Riemannian surfaces]{The Rad\'o--Kneser--Choquet theorem for \\ $p$-harmonic mappings
between Riemannian surfaces}

\author[Tomasz Adamowicz]{Tomasz Adamowicz{\small$^1$}}
\address{Institute of Mathematics, Polish Academy of Sciences,
\'Sniadeckich 8, Warsaw, 00-656, Poland\/}
\email{tadamowi@impan.pl}

\author[Jarmo J\"a\"askel\"ainen]{Jarmo  J\"a\"askel\"ainen{\small$^2$}}
\address{Department of Mathematics and Statistics, University of Jyv\"askyl\"a, P. O. Box 35, FI-40014 University of Jyv\"askyl\"a; Department of Mathematics and Statistics, University of Helsinki}
\email{jarmo.t.jaaskelainen@jyu.fi}

\author[Aleksis Koski]{Aleksis Koski\small{$^3$}}
\address{Department of Mathematics and Statistics, University of Jyv\"askyl\"a, P. O. Box 35, FI-40014 University of Jyv\"askyl\"a}
\email{aleksis.t.koski@jyu.fi}

\date{\today}
\maketitle

\footnotetext[1]{T. Adamowicz was supported by a grant of National Science Center, Poland (NCN),
UMO-2013/09/D/ST1/03681.}
\footnotetext[2]{J. J\"a\"askel\"ainen was supported by the Academy of Finland (318636 and 276233).}
\footnotetext[3]{A. Koski  was supported by the V\"ais\"al\"a Foundation and the ERC Starting Grant number 307023.}

\noindent{\small
{\bf Abstract}. In the planar setting the Rad\'o--Kneser--Choquet theorem states that a harmonic map from the unit disk onto a Jordan domain bounded by a convex curve is a diffeomorphism provided that the boundary mapping is a homeomorphism. We prove the injectivity criterion of Rad\'o--Kneser--Choquet for $p$-harmonic mappings between Riemannian surfaces.

In our proof of the injectivity criterion we approximate the $p$-harmonic map with auxiliary mappings that solve uniformly elliptic systems. We prove that each auxiliary mapping has a positive Jacobian by a homotopy argument. We keep the maps injective all the way through the homotopy with the help of the minimum principle for a certain subharmonic expression that is related to the Jacobian.
}

\bigskip
\noindent
{\small \emph{Key words and phrases}:  curvature, Jacobian, maximum principle, $p$-harmonic mappings, Riemannian surface, subharmonicity, univalent
}

\medskip
\noindent
{\small Mathematics Subject Classification (2010): Primary:  35J47; Secondary: 58E20 35J70 35J92
}

\tableofcontents

\section{Introduction}\label{sec-Intro}

\noindent Finding injective solutions to a given system of PDEs is one of the most profound and fundamental questions in the geometric function theory.  In the nonlinear elasticity this and related questions about the Jacobian determinants
have been investigated, for instance, by Antman \cite{antman}, Ball~\cite{ball2, ball1, ballopen}, Ciarlet \cite{ciarlet}. The main focus of our work is devoted to the injectivity criterion known in the Euclidean setting of $\R^2$ as the Rad\'o--Kneser--Choquet theorem (the RKC theorem, for short), see e.g. the presentation in Duren~\cite[Chapter 3]{dur}:  \emph{A harmonic mapping from the unit disk onto a Jordan domain bounded by a convex curve is a diffeomorphism provided that the boundary mapping is a homeomorphism}.

This remarkable result fails to hold in the higher Euclidean dimensions, see Laugesen's counterexample for spheres~\cite{lau}. However, counterparts of the RKC theorem are known to hold in various other settings: for harmonic mappings between two compact Riemannian surfaces where the target surface has non-negative curvature (Schoen--Yau~\cite{sy}), without the curvature assumptions but under the assumption on the smallness of the image set (Jost~\cite{jost81}), see also Kalaj~\cite{kal} for some further generalizations.

Another direction of extending the RKC theorem is to consider the setting of planar domains but for more general mappings, see Alessandrini--Nesi~\cite{alne1, alne2}, Alessandrini--Sigalotti~\cite{al-si}, Bauman--Phillips~\cite{baup} and Iwaniec--Onninen~\cite{iwon2}. In particular, the case of the (isotropic) $p$-harmonic mappings in the plane was investigated in~\cite{iko}.

New important applications of the $p$-harmonic RKC theorem, for both $p = 2$ and $p \in (1, \infty)$, have been seen in the diffeomorphic approximation of $W^{1,p}$-Sobolev homeomorphisms, where the local $p$-harmonic replacements are the necessary building blocks, see Iwaniec--Kovalev--Onninen~\cite{IKO11, IKO12a, IO13}, and in the approximation of  $W^{1,p}$-monotone mappings by monotone homeomorphisms, see Iwaniec--Onninen~\cite{iwon2}.

\smallskip

The main goal of this work is to prove the RKC theorem for the $p$-harmonic mappings between  Riemannian surfaces. We will now describe the details of our setting and the main assumptions on surfaces and mappings.

When studying deformation of some surface $M$ under a map $u: M \to N'$ (where $N'$ denotes a target manifold), one often assumes that $M$ responds to the energy-minimal deformation likewise in all directions. In other words, the energy functional $\mathbb{E}[u]$ assumes the form
\begin{equation}\label{energy}
\mathbb{E}[u] = \int_{M} E(|Du|^2)dV_M.
\end{equation}
In this article we focus on the study of minimizers for the $p$-harmonic-type energy, i.e.  $E(t) = (\ep^2+t)^{\frac{p}{2}}$ for $2 \leq p < \infty$ and $\ep\geq 0$ with the degenerate elliptic case $\ep=0$ being our central target of investigation. We reserve the notion of a \emph{$p$-harmonic mapping} to exclusively mean a minimizer of the $p$-harmonic energy among Sobolev mappings with given boundary data.
Naturally, these minimizers also solve the associated Euler-Lagrange system of equations. However, unlike in the case of the $p$-harmonic mappings in the plane, the appearance of the curvature makes these equations nonhomogeneous. The nonhomogeneity depends on the second fundamental form of the target surface. This discrepancy leads to several difficulties which one needs to handle.

 Let $M$ be a Riemannian surface with $C^2$-boundary and $N'$ be a compact Riemannian surface without boundary. We equip $M$ and $N'$ with conformal metrics $\sigma$ and $\rho$, respectively. Moreover, we assume that $\sigma$ and $\rho$ are bounded from below and above and we make the following additional assumptions.
\begin{enumerate}
\item[(A1)] The Gaussian curvature of $M$ is nonpositive.
\item[(A2)] Suppose $N \subset N'$ is a geodesically convex submanifold with boundary, and $\partial N$ is assumed to be $C^{1,\alpha}$-regular. Assume also that it satisfies the following smallness condition: $N$ is contained in a small geodesic ball:
\begin{equation}\label{cond-small}\tag{S}
N \subset B(P_0, r_{N'\!, p}), \ \ P_0 \in N', r_{N'\!, p} > 0,
\end{equation}
where by a geodesic ball $B(p,r)$ we mean a ball of radius $r$ centered at $p$ in the metric induced by $\rho$ on the Riemannian manifold $N'$. This condition may be written equivalently in terms of the diameter of $N$ as
\begin{equation*}
\diam(N) < \epsilon_{N',p}.
\end{equation*}
This condition also implies that $N$ is covered by a single coordinate chart. 
\item[(A3)] The Gaussian curvature of $N'$ is nonnegative on $N$.
\end{enumerate}

\begin{theorem}[The RKC theorem for $p$-harmonic mappings between Riemannian surfaces]\label{MainThm}
Suppose that assumptions (A1-3) hold. Denote by $\phi_0 : \bdy M \to \bdy N$ a $C^{1,\alpha}$-homeomorphism between boundaries with non-vanishing tangential derivative.

Let $u : M \to N'$ denote a minimizer of the $p$-harmonic energy with boundary data $u = \phi_0$ on $\bdy M$, and assume that $u$ belongs to $C^{1,\alpha}(M, N')$. Then $u$ is in fact a $C^{1, \alpha}$-diffeomorphism from $M$ to $N$ with nonvanishing Jacobian in $M$. In particular, $u\in C^\infty(M)$.
\end{theorem}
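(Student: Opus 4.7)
Following the strategy announced in the abstract, I would combine a regularization, a homotopy deformation of the boundary data, and a minimum principle for a Jacobian-related quantity.

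The first step is to regularize the degenerate system by considering, for $\epsilon>0$, the minimizer $u^\epsilon$ of the uniformly elliptic energy $\mathbb{E}^\epsilon[v]=\int_M(\epsilon^2+|Dv|^2)^{p/2}\,dV_M$ with boundary condition $v|_{\bdy M}=\phi_0$. Standard elliptic theory gives interior $C^{2,\alpha}$ estimates for $u^\epsilon$ and, after passing to a subsequence, convergence $u^\epsilon\to u$ in $C^{1,\alpha}_{\text{loc}}(M,N')$ as $\epsilon\to 0^+$. The geodesic convexity of $N$ (assumption (A2)) together with an energy-comparison argument using the nearest-point projection onto $N$ ensures $u^\epsilon(M)\subset N$. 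It therefore suffices to show that each $u^\epsilon$ is a $C^{1,\alpha}$-diffeomorphism $M\to N$ with a positive Jacobian, with bounds stable as $\epsilon\to 0$.

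The next step is a homotopy argument for the regularized solutions. I would construct a continuous family $\{\phi_t\}_{t\in[0,1]}$ of $C^{1,\alpha}$-homeomorphisms $\bdy M\to\bdy N_t$, each with non-vanishing tangential derivative, connecting $\phi_0$ at $t=0$ to a model datum $\phi_1$ at $t=1$ whose associated minimizer $u^\epsilon_1$ is manifestly a diffeomorphism; a natural choice of $\phi_1$ is one for which the target $N_1$ is a small nearly-Euclidean geodesic disc, so that $u^\epsilon_1$ is a small $C^{2,\alpha}$-perturbation of the planar $p$-harmonic mapping treated in \cite{iko} and is a diffeomorphism by that result together with the implicit function theorem. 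Let $u^\epsilon_t$ denote the corresponding minimizer, and set $T=\{t\in[0,1]: u^\epsilon_t\text{ is a }C^{1,\alpha}\text{-diffeomorphism onto }N_t\text{ with }J_{u^\epsilon_t}>0\}$. The openness of $T$ follows from continuous dependence of $u^\epsilon_t$ on $t$ in $C^{1,\alpha}$ combined with the inverse function theorem and a degree argument at the boundary.

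The essential point is the closedness of $T$, which I would derive from a subharmonicity-type property of a suitable quantity $Q=Q(u^\epsilon_t)$ constructed from the first derivatives and the Jacobian --- in isothermal coordinates on $M$ and $N$, heuristically something like $Q=\log J_{u^\epsilon_t}+\text{(correction in the metrics and }\epsilon\text{)}$. A Bochner-type computation on the Euler--Lagrange system, exploiting $K_M\le 0$ from (A1) and $K_N\ge 0$ from (A3), should yield an inequality $\mathcal{L}_t Q\ge 0$ for a linear second-order uniformly elliptic operator $\mathcal{L}_t$, the two curvature hypotheses providing exactly the signs needed to absorb the curvature-induced lower-order terms, while (A2) keeps the nonhomogeneity coming from the second fundamental form of $\bdy N$ uniformly small. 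The minimum principle then places the infimum of $Q$ on $\bdy M$; there, the non-vanishing tangential derivative of $\phi_t$ and a Hopf-lemma analysis of the normal derivative of $u^\epsilon_t$ (using the convexity of $\bdy N_t$) yield a positive lower bound on $J_{u^\epsilon_t}|_{\bdy M}$. This bound propagates to the interior and to the limit $t_k\to t_\infty$, proving closedness. Global injectivity from local injectivity is standard by a winding-number/degree argument using the homeomorphism $\phi_t:\bdy M\to\bdy N_t$ and the simple connectivity of $N_t$.

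The main obstacle will be the identification and analysis of the correct quantity $Q$: in the planar Euclidean situation of \cite{iko} one has the luxury of the hodograph transform and complex-analytic manipulation of $u_z,u_{\bar z}$, whereas here the Riemannian curvature, the $\epsilon$-regularization, and the nonhomogeneous Euler--Lagrange terms from the second fundamental form of $\bdy N$ all interact in the Bochner calculation; making the signs come out right is where (A1)--(A3) will each be used critically. Once closedness of $T$ is in hand, the homotopy shows $0\in T$, i.e., $u^\epsilon$ is a diffeomorphism with positive Jacobian, uniformly in $\epsilon$. Letting $\epsilon\to 0^+$, the $C^{1,\alpha}$-convergence transfers this to $u$, which is therefore a $C^{1,\alpha}$-diffeomorphism onto $N$ with $J_u>0$; the Euler--Lagrange system becomes non-degenerate and an elliptic bootstrap yields $u\in C^\infty(M)$.
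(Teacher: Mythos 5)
Your overall architecture (regularize to the uniformly elliptic $\ep$-energy, run a homotopy/continuity argument to get $J_{\ups}>0$, control the Jacobian by a minimum principle for a subharmonic quantity, then let $\ep\to 0$) is exactly the paper's strategy, and your boundary analysis (Hopf lemma plus non-vanishing tangential derivative) and maximum principle via convexity of $N$ also match. However, there are two genuine gaps. The first is the one you yourself flag as ``the main obstacle'': the identification of the quantity $Q$. Your candidate $Q=\log J+(\text{corrections})$ is the right object only for $p=2$; already in the flat case of \cite{iko} the superharmonicity of $\log J$ fails for $p\neq 2$, and the correct expression is $T=E'(|Du|^2)\,J$, for which one proves that $-T^{-N}$ (for a large exponent $N=N(p)$) is a supersolution of the Beltrami--Laplace operator. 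On surfaces this requires the full Bochner-type computation of Theorem~\ref{Ttheorem}, in which the curvature terms enter precisely as $T^2\bigl(\tfrac{K}{2}-\tfrac{K'}{2}|Du|^2\bigr)$ and are discarded using (A1) and (A3). This is the technical heart of the proof, not a detail that can be deferred, so as written the proposal does not close.

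The second gap concerns the passage $\ep\to 0$. You assert that the positive lower bound on $J_{\ups}$ is ``stable as $\ep\to 0$'', but the bound the homotopy argument actually produces is of the form $J_{\ups}\geq \ep^{p_t-2}c_1/C_3$ (the boundary control enters through $T=(\ep^2+|D\ups|^2)^{(p-2)/2}J_{\ups}\geq \ep^{p-2}c_1$), which degenerates as $\ep\to 0$ whenever $p>2$. The correct conclusion is obtained differently: one proves locally uniform convergence of $D\ups$ and of the Jacobians (via Caccioppoli estimates and a $W^{1,2}$ bound on $(\ep^2+|D\ups|^2)^{(p-4)/4}D\ups$), passes the \emph{minimum principle for $T$} (not the quantitative bound) to the limit, and then combines it with the positivity of $J_u$ in a one-sided neighborhood of $\partial M$ to force $J_u>0$ everywhere. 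Relatedly, your compactness argument only identifies the subsequential limit of $\ups$ with $u$ if you invoke uniqueness of the minimizer, which is where the smallness condition (S) is needed. Finally, a smaller divergence: the paper's homotopy endpoint is a conformal map, reached by simultaneously deforming the boundary data and the exponent $p_t$ from $2$ to $p$, which makes the starting Jacobian manifestly positive; your proposal to shrink the target to a nearly Euclidean disc and perturb off the planar result of \cite{iko} would still require a quantitative Jacobian bound to survive the perturbation, i.e.\ it does not avoid the minimum principle and adds an extra layer of difficulty.
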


A key aspect in our investigations is the problem of Jacobian estimates: In the geometric mapping theory and the nonlinear elasticity (e.g. Ball \cite{ball1, ballopen}) one often needs to control the Jacobian determinant $J(z, u) = \det(Du(z))$ from below, for example by means of a suitable minimum principle. For the uncoupled system of $2$-Laplace equations the Jacobian determinant of a solution satisfies
the minimum principle in any region where it is positive, which is a straightforward consequence of the superharmonicity of $z \mapsto \log J(z, u)$. When we deal with more general energy integrals it is not obvious which differential expressions work for Jacobian estimates. This has raised some recent interest for superharmonic expressions, see~\cite{iko},
 \cite{koski} and Kalaj~\cite{kalaj}. The proof of our analogue of the Rad\'o-Kneser-Choquet  theorem is based on the following subharmonicity result.

\begin{theorem}\label{Ttheorem}
Assume (A1) and (A3). Let $\ups: M \to N'$ be a stationary solution of the Euler--Lagrange system of equations associated with energy functional \eqref{energy} with $E(|D\ups|^2) = \left(\epsilon^2 + |D\ups|^2\right)^{p/2}$ for $2 \leq p < \infty$ and $\epsilon \geq 0$. Moreover, suppose that the Jacobian of $\ups$ satisfies $J_{\ups} = \tfrac{\rho(\ups(z))}{\sigma(z)}(|u^{\epsilon}_z|^2 - |u^{\epsilon}_{\zbar}|^2)> 0$ on $M$.  Define the differential expression $T$ on $M$ by the formula
\[
T  = E'\left(\tfrac{\rho(\ups(z))}{\sigma(z)}(|u^{\epsilon}_z|^2 - |u^{\epsilon}_{\zbar}|^2)\right)\; \frac{\rho(\ups(z))}{\sigma(z)}(|u^{\epsilon}_z|^2 - |u^{\epsilon}_{\zbar}|^2).
\]
Then, there exists a sufficiently large exponent $N_E > 0$ such that the function $-T^{-N_E}$ is a supersolution of the Beltrami--Laplace equation on $M$. In the particular case of the $p$-harmonic energy it holds that $N_E=N(p)$.
\end{theorem}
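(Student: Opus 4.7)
The plan is to deduce the theorem from a single pointwise inequality of the form $\Delta\log T \le N_E\,|\nabla\log T|^2$ along any solution of the EL system. Indeed, in an isothermal chart on $M$ we have $\Delta_{LB}=\sigma^{-1}\Delta$ with $\Delta=4\partial_z\partial_{\bar z}$, so the supersolution property for $-T^{-N_E}$ amounts to $\Delta(T^{-N_E})\ge 0$; a direct expansion gives
\[
\Delta(T^{-N_E}) = N_E\,T^{-N_E-2}\bigl((N_E+1)|\nabla T|^2 - T\,\Delta T\bigr),
\]
and combining with the identity $\Delta T/T = \Delta\log T+|\nabla T|^2/T^2$ reduces matters precisely to the claimed pointwise bound on $\Delta\log T$. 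The whole task is therefore to produce a constant $N_E$ depending only on the nonlinearity $E$ (equivalently, on $p$ alone in the $p$-harmonic case).

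To this end I would first cast the EL system in complex form in isothermal coordinates on both surfaces, available on $N$ thanks to (A2). Writing $a:=E'(|D\ups|^2)$ with $|D\ups|^2=2\sigma^{-1}\rho(\ups)(|\epuz|^2+|\epub|^2)$ and $J:=\sigma^{-1}\rho(\ups)(|\epuz|^2-|\epub|^2)>0$, so that $T=E'(J)J$ is a scalar function of $J$, the EL system reads
\[
\partial_{\bar z}\bigl(a\,\rho(\ups)\,\epuz\bigr)+a\,\rho(\ups)\,(\partial_w\log\rho)(\ups)\,\epuz\,\epub=0
\]
together with its complex conjugate. Since the Gaussian curvatures are $K_M=-(2\sigma)^{-1}\Delta\log\sigma$ and $K_{N'}=-(2\rho)^{-1}\Delta_w\log\rho$, assumptions (A1) and (A3) translate to the sign conditions $\Delta\log\sigma\ge 0$ on $M$ and $(\Delta_w\log\rho)(\ups)\le 0$ on $N$, which are precisely what the sequel needs.

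Next I would compute $T_z$, $T_{\bar z}$, and $T_{z\bar z}$ from the chain rule applied to $T=E'(J)J$, introducing the dimensionless ratio
\[
\beta(J):=\frac{J\,E''(J)}{E'(J)},
\]
which for the $p$-harmonic energy equals $(p/2-1)J/(\epsilon^2+J)\in[0,p/2-1]$; this is the only place where the exponent $p$ enters. Direct differentiation produces the second-order derivatives $\partial_{\bar z}\epuz$ and $\partial_z\epub$ in $\Delta T$; both are eliminated using the EL system, at the price of introducing the target datum $(\partial_w\log\rho)(\ups)$. After this substitution, the only genuinely second-order objects remaining in $\Delta\log T$ are $\Delta\log\sigma$ and $(\Delta_w\log\rho)(\ups)$, whose contributions to $\Delta\log T$ have favorable signs by (A1) and (A3) and can be discarded. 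The first-order residue is algebraic in $\epuz$, $\epub$ and their complex conjugates, and when expanded against $|\nabla T|^2=(1+\beta)^2(E'(J))^2|\nabla J|^2$ it is controlled by $C(p)\,|\nabla\log T|^2$ with a constant depending only on $\beta$, yielding the desired inequality and thus $N_E=C(p)$.

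The principal obstacle will be the algebra of the third step. The nonhomogeneity of the EL system couples the target connection $(\partial_w\log\rho)(\ups)$ to the first derivatives of $\ups$, producing in $T_{z\bar z}$ a long list of unsigned cross terms which would not satisfy a Cauchy--Schwarz absorption into $|\nabla T|^2$ unaided. The success of the plan rests on grouping these so that the sign-favourable contributions from (A1) and (A3) absorb the unsigned cross terms, while the remainder reorganizes itself as a nonnegative quadratic form in $\nabla\log T$ with coefficients controlled uniformly in terms of $\beta$. The positivity of $J$ on $M$ keeps $T>0$ pointwise and makes $\log T$, as well as all the intermediate ratios that appear in the computation, well defined.
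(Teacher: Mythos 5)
Your reduction of the supersolution property to the pointwise inequality $T\,\Delta T\le (N_E+1)|\nabla T|^2$, equivalently $\Delta\log T\le N_E|\nabla\log T|^2$, is exactly the paper's starting point, and your identification of where (A1) and (A3) enter (through $\Delta\log\sigma$ and $(\Delta_w\log\rho)(\ups)$, i.e.\ through the Gauss curvatures, with favourable signs) is also correct in spirit: the paper obtains the same term $T^2\bigl(\tfrac{K}{2}-\tfrac{K'}{2}|Du|^2\bigr)$ via the Schoen--Yau commutation identities for third covariant derivatives rather than by direct differentiation in a flat chart. But the core of the argument --- your ``third step'' --- is precisely what you leave as an assertion, and the mechanism you propose for closing it is not the one that works. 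The first-order residue in $TT_{\theta\tbar}-|T_\theta|^2$ is \emph{not} absorbed by the curvature contributions (those are simply discarded, having a good sign), and it is \emph{not} controlled by $C(p)|\nabla\log T|^2$ by any direct Cauchy--Schwarz grouping. What the paper proves is the exact identity $TT_{\theta\tbar}-|T_\theta|^2=|\lsc|^2-|\asc|^2+T^2\bigl(\tfrac{K}{2}-\tfrac{K'}{2}|Du|^2\bigr)$ for specific quadratic expressions $\asc,\bsc,\lsc$ in the first and second covariant derivatives, together with the bound $|\lsc|\le C\,(|T_\theta|+|\asc|)$ with $C=\tfrac{|\alpha|}{2+2\alpha-|\alpha|}<1$, where $\alpha=|Du|^2E''/E'>-1/2$. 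The negative square $-|\asc|^2$ is indispensable for absorbing the cross terms; without identifying it, the quadratic form cannot be shown nonpositive for any $N_E$. Your proposal never produces this term or the threshold condition $C<1$.

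Two further points would derail the computation as written. First, you cast the Euler--Lagrange system in the divergence form $\partial_{\bar z}\bigl(a\rho(\ups)\epuz\bigr)+a\rho(\ups)(\partial_w\log\rho)(\ups)\epuz\epub=0$; the correct system is $[\lambda\epuz]_{\zbar}+[\lambda\epub]_{z}+2\lambda(\partial_u\log\rho)\epuz\epub=0$, and the paper explicitly notes that the divergence-type formulation is \emph{not} equivalent to it. Since the whole computation consists of substituting the system into $T_{\theta\tbar}$, starting from the wrong equation changes every subsequent term. Second, your formula $|\nabla T|^2=(1+\beta)^2(E'(J))^2|\nabla J|^2$ treats $T$ as a function of $J$ alone. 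The quantity actually used in the proof (and in the later application to the homotopy argument) is $T=E'(|D\ups|^2)\,J$, whose gradient involves $\nabla(|D\ups|^2)$ as well as $\nabla J$ and is not proportional to $\nabla J$; handling the discrepancy between these two gradients is exactly what the parameter $\alpha$ and the identity $\lambda J|Du|^2_\theta=|Du|^2\bsc+2\bar{\ut}\utb\asc+|Du|^2\lsc+2\ut\bar{\utb}\bar{\lsc}$ are for. So while the skeleton of your plan is sound, the estimate on which everything rests is asserted rather than proved, and the proposed route to it would not go through.
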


 In our paper we apply this observation only to $p$-harmonic type mappings (including the case of $\ep>0$), but the technique of the proof of Theorem~\ref{Ttheorem} is robust enough to encompass wider class of isotropic energies.

Since supersolutions of the Beltrami--Laplace equation satisfy the minimum principle, we obtain the following observation.

\begin{cor} \label{MinPrin}
Under the assumptions of Theorem \ref{Ttheorem}, the expression $T$ satisfies the minimum principle in $M$, meaning that for any compact subset $K \subset M$ we have
\begin{equation*}
\inf_{z\in K} T(z) \geq \inf_{z \in \partial K} T(z).
\end{equation*}
\end{cor}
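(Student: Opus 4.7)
The plan is to derive the minimum principle for $T$ directly from the supersolution property stated in Theorem~\ref{Ttheorem}. First I would observe that under the standing hypotheses we have $T>0$ on $M$: indeed, $J_{\ups}>0$ by assumption, and $E'$ is strictly positive on $(0,\infty)$ since $E(t)=(\epsilon^2+t)^{p/2}$ is strictly increasing for $p\geq 2$. Thus the function $v := -T^{-N_E}$ is well-defined, smooth (by the regularity of $\ups$), strictly negative, and by Theorem~\ref{Ttheorem} it is a supersolution of the Beltrami--Laplace equation on $M$.

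Next, I would invoke the classical minimum principle for supersolutions of linear elliptic equations in divergence form on a Riemannian surface (which applies to the Beltrami--Laplace operator $\Delta_\sigma$ with its conformal factor $1/\sigma$): for any compact $K\subset M$,
\begin{equation*}
\inf_{z\in K} v(z) \;\geq\; \inf_{z\in \partial K} v(z).
\end{equation*}
Substituting $v=-T^{-N_E}$, this inequality becomes $\sup_{z\in K} T^{-N_E}(z) \leq \sup_{z\in \partial K} T^{-N_E}(z)$. Finally, since $N_E>0$ and $T>0$, the map $t\mapsto t^{-N_E}$ is strictly decreasing on $(0,\infty)$, so taking reciprocals and $N_E$-th roots reverses the inequality and yields
\begin{equation*}
\inf_{z\in K} T(z) \;\geq\; \inf_{z\in \partial K} T(z),
\end{equation*}
which is exactly the claim.

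There is no real obstacle here; the content of the corollary is entirely in Theorem~\ref{Ftzb}\ref{Ttheorem}, and the corollary is essentially a reformulation. The only points that require a brief verification are the positivity $T>0$ (needed to define $T^{-N_E}$ and to ensure monotonicity of $t\mapsto t^{-N_E}$) and the fact that the Beltrami--Laplace operator on the Riemannian surface $M$ is a linear second-order elliptic operator to which the standard weak minimum principle for supersolutions applies.
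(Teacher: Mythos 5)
Your proposal is correct and matches the paper's (one-line) justification: the paper derives the corollary exactly by noting that $-T^{-N_E}$ is a supersolution of the Beltrami--Laplace equation, hence obeys the minimum principle, and then transfers this to $T$ via the positivity of $T$ and the monotonicity of $t\mapsto t^{-N_E}$. The only blemish is the stray reference \emph{Theorem}~$F^t_{\bar z}$ in your last paragraph, which is a typo rather than a mathematical issue.
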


Our aim is to make use of the above result to prove, in the end, that the Jacobian of $u$ does not vanish in $M$. Basic topology then implies that $u : M \to N$ is a smooth homeomorphism. In fact, $u$ is $C^{1, \alpha}$-diffeomorphism from $M$ to $N$, by the inverse function theorem. However, since Theorem~\ref{Ttheorem} is only applicable to mappings already having a positive Jacobian, we will have to find a nondirect way to apply it.

\smallskip

Let us now briefly discuss our assumptions and describe the structure of the paper. Section~\ref{sec-basic} is devoted to recalling some elementary facts in differential geometry needed largely in Section~\ref{sec-subharm} for the proof of the subharmonicity result, Theorem~\ref{Ttheorem}. In Section~\ref{sect-map} we set up the stage for further discussion: we recall the notion of Sobolev spaces and $p$-harmonic mappings on Riemannian surfaces. Furthermore, we derive the key Euler--Lagrange system of equations (Definition~\ref{phm-weak}).

We note that $p$-harmonic mappings automatically belong to $C^{1,\alpha}_{loc}(\mathrm{int}\,M,N')$ with $\alpha$ depending on $p$ and the geometry of $M$ and $N'$, \cite{HL}. Similar regularity results holding up to the boundary of the domain are much less prominent in the literature, and this is the reason for our a priori regularity assumption in our RKC theorem (Theorem~\ref{MainThm}) on the $p$-harmonic map, i.e., $C^{1,\alpha}(M, N')$, though the restriction to this regularity class might be superfluous. This assumption already guarantees the $C^{1, \alpha}$-smoothness of the boundary $\bdy M$. For further discussion of the $C^{1,\alpha}$-regularity of $p$-harmonic mappings, including the minima of the uniformly elliptic $p$-harmonic systems, we refer to Section~\ref{sect-systems}.

The fact that we assume in (A2) that the target domain $N$ is convex is absolutely necessary, which is already seen from the flat case due to the failure of the classical RKC theorem for non-convex targets, see Chapter 3.1 in Duren~\cite{dur}.

The smallness assumption (S) on the target $N$ in (A2) is also necessary to guarantee uniqueness for the $p$-harmonic and related Dirichlet problems. For a simple example on the necessity of this assumption merely notice that if $M = \{(x,y,z) \in S^2, z \geq 0\}$ and $N' = S^2$, then both the identity and reflection map $(x,y,z) \mapsto (x,y,-z)$ from $M$ to $S^2$ are minimizers of the Dirichlet energy with the same boundary values.  Let us mention, that in some important cases of manifolds, one can compute the bounds for the uniqueness radius in \eqref{cond-small}. For example, if $\kappa$ denotes an upper bound of the sectional curvature of $N'$, then Fuchs~\cite{fuchs} showed that $r_{N',p}<\frac{\pi}{2\sqrt{\kappa}}$.

We would like to emphasize that unlike some of the literature that study the so-called small solutions, see \cite{hildetal, jagermeister, jost81, hil, fuchs, fare}, we do not make any extra a priori assumptions on the size of the image set for mappings in the class in which we minimize the $p$-harmonic energy. Rather, we are able to conclude that any energy minimizer with given boundary data $\phi_0$ must satisfy such a smallness property via the maximum principles obtained in Section~\ref{sect-geometry}.

The smallness assumption (S) is needed, in addition, to establish the uniqueness (Appendix A), also for the Caccioppoli-type estimates in Section \ref{sect9} and for the maximum principle, see Appendix B.

In Section~\ref{sect-geometry} we also show the positivity of the Jacobian along the boundary. Here we use $C^2$-smoothness of $\bdy M$ to guarantee the interior sphere condition. Furthermore, the regularity of $\bdy M$ guarantees the existence of a $p$-harmonic mapping with given boundary values (Section 7). The positivity of the Jacobian on the boundary also lets us reduce the problem of proving the RKC theorem to the case where $M$ and $N$ have smooth boundaries and the boundary data is a $C^\infty$-smooth diffeomorphism (see also Section~\ref{subs-4.3}).

After the reduction to the smooth case, we study auxiliary mappings $\ups$ that approach our mapping $u$ in the limit (the existence and the uniqueness of these maps is discussed in Section 7, Appendix A). Mappings $\ups$ solve the uniformly elliptic systems and each such map  is shown to have a positive Jacobian. In the flat case, one can obtain this result as a direct consequence of Bauman--Marini--Nesi~\cite{bmn}, but we have to go through a different route and our proof is based on a homotopy argument along the lines of Iwaniec--Onninen~\cite{iwon} and Jost~\cite{jost81}, see Theorem~\ref{thm-jacob}.

Our homotopy argument requires uniform H\"older and $C^{1,\alpha}$-estimates up to the boundary, which we show in Section~\ref{sect-homotopy}.  The positivity of Jacobians follows by the minimum principle (Corollary \ref{MinPrin}). For the minimum principle we need the Gauss curvature assumptions (A1) and (A3).

The remaining major step to be made is then completed in Section \ref{sect9}. There, we show the  $L^p$-convergence for differentials $D\ups$ to $Du$ and the convergence of Jacobians of $\ups$ to the Jacobian of $u$. The minimum principle  keeps the maps $\ups$ injective all the way through the homotopy, which enables us to conclude the RKC theorem in Section~\ref{Sec10}.

\smallskip

In order to put our results in a wider perspective, let us notice that our variant of Rad\'o--Kneser--Choquet theorem can be seen as a contribution to the studies of the mapping problem for $p$-harmonic type mappings, known for example in the setting of planar conformal mappings (the Riemann mapping theorem) or planar quasiconformal mappings (the measurable  Riemann mapping theorem). We finish the introduction with a remark that our approach to the Rad\'o--Kneser--Choquet theorem on surfaces can be a subject of further generalization for isotropic energies, other than the $p$-harmonic ones.

\smallskip

\noindent {\bf Acknowledgements.}  Parts of the manuscript were written when T.A. was visiting the University of Helsinki and J.J. and A.K. were visiting the Institute of Mathematics of the Polish Academy of Sciences in Warsaw. Authors would like to thank the hosting institutions for their hospitality and support.

The authors would like to thank the anonymous referees whose remarks helped to improve the exposition in this paper.

\section{Differential geometry on Riemannian surfaces}\label{sec-basic}

\noindent
In this section we recall some basics of the differential forms and the related geometry of Riemannian surfaces, needed in the further presentation, especially in Section~\ref{sec-subharm}.

Let $M$ be a Riemannian surface with boundary and $N'$ be a compact Riemannian surface without boundary. We denote the global coordinates on $M$ by $z=x+iy = x^1 + i x^2$ and the local coordinate chart on $N'$ by $\tilu=\tilu^1+i\tilu^2$. We also denote by $z$ the coordinate map $z:M\to z(M)\subset \C$ and similarly for $\tilu$. Namely, $\tilu: N' \to \tilu(N')\subset \C$. Since $\partial M$ is assumed to be non-empty, we also require that for any $x_0\in \partial M$ and for all sets $U\subset M$ open in the topology of $M$ with $x_0\in \overline{U}$ it holds that $z(U\cap M)$ is, upon postcomposition with a conformal map in $\C$ (if necessary),  mapped into a half disc $B(0,1)\cap \{\Im z\geq 0\}$ with $z(x_0)\in B(0,1)\cap \{\Im z=0\}$.

 Moreover, we equip both surfaces with conformal metrics $\sigma(z)|dz|^2$ on $M$ and $\rho(\tilu)|d\tilu|^2$ on $N'$. We sometimes abbreviate these metrics as $\sigma$ and $\rho$. The existence of the conformal (isothermal) coordinates on the Riemannian surfaces can be proven, e.g. by employing the Beltrami equation, see Imayoshi--Taniguchi~\cite[Chapter 1.5.1]{imtan}. Moreover, in what follows we assume that $\sigma$ and $\rho$ are both bounded from below and above.

Recall that, if $w = a(\tilde{u})\, d\tilde{u} + b(\tilde{u})\, d\overline{\tilu}$ is a $1$-form on $N'$, its pullback by a mapping $u : M \to N'$ is a $1$-form $u^{*}(w)$ on $M$. It is defined formally as
\begin{equation}\label{defn-pullb}
u^{*}(w) = a(u(z))\big(d(\tilde{u}^1 \circ u) + id(\tilde{u}^2 \circ u)\big) + b(u(z))\big(d(\tilde{u}^1 \circ u) -i d(\tilde{u}^2 \circ u)\big).
\end{equation}

For the further discussion of the differential geometry on Riemannian surfaces, we will use similar notation as in Schoen--Yau \cite[Section 1]{sy}. Let us define the following $1$-forms on $M$ and $N'$, respectively,
\begin{equation}\label{sig-om}
 \theta=\sqrt{\sigma(z)}dz, \qquad \omega=\sqrt{\rho(\tilu)}d\tilu.
\end{equation}
For a mapping $u:M\to N'$ we define first derivatives of $u$ with respect to $\theta$ via the pullback of $1$-form $\omega$
\begin{equation}\label{om-pullb}
\aligned
 u^{*}(\omega) &=\utet \theta + \utetb\, \tbar \\
 u^{*}(\overline{\omega}) &= \ubtet \theta + \ubtetb\, \tbar.
 \endaligned
\end{equation}
Using the definition of the pull-back and \eqref{om-pullb} we may find $\utet, \utetb, \ubtet, \ubtetb$. Indeed, by \eqref{defn-pullb} and \eqref{sig-om} we have
\begin{equation}\label{pullback}
 u^{*}(\omega)= u^{*}(\sqrt{\rho(\tilu)}d\tilu)=\sqrt{\rho(u(z))}(du^1+i du^2)=\sqrt{\rho(u(z))}(u_z^1dz+u_{\zbar}^1d\zbar+iu_z^2dz+iu_{\zbar}^2d\zbar),
\end{equation}
where we use the standard notation for the complex derivatives of $u$
$$
2\uz\,=\,u_x-\,i\,u_y,\quad 2\ub\,=\,u_x+\,i\,u_y.
$$
Using the definition of $1$-form $\theta$ we find that
\begin{equation*}
\utet \theta + \utetb\, \tbar = \utet \sqrt{\sigma(z)}dz + \utetb\,\sqrt{\sigma(z)}d\zbar.
\end{equation*}
Comparing the corresponding coefficients in the last equation and \eqref{pullback} we arrive at the following formulas:
\begin{equation*}
\aligned
 \utet=\frac{\sqrt{\rho(u(z))}}{\sqrt{\sigma(z)}} (u_z^1+i u_z^2)=\frac{\sqrt{\rho(u(z))}}{\sqrt{\sigma(z)}}\uz, 
 \qquad
 \utetb=\frac{\sqrt{\rho(u(z))}}{\sqrt{\sigma(z)}} (u_\zbar^1+i u_\zbar^2)=\frac{\sqrt{\rho(u(z))}}{\sqrt{\sigma(z)}}\ub.
 \endaligned
\end{equation*}
Similar direct computations allow us to find that $\ubtet=\overline{\utetb}$ and $\ubtetb=\overline{\utet}$.

Following the standard approach we define $\utt$, $\uttb$, $\utbt$, $\utbtb$, the second covariant derivatives of map $u$ (in fact, the Hessian of $u$):
\begin{align*}
 d\utet +\utet\theta_c-\utet u^{*}\omega_c&=\utt \theta+ \uttb \tbar \\
 d\utetb +\utetb\overline{\theta}_c-\utetb u^{*}\omega_c&=\utbt \theta+ \utbtb \tbar.
\end{align*}
Here $\theta_c$ and $\omega_c$ are Riemannian connection $1$-forms on $M$ and $N'$, i.e.,
$$
\aligned
\theta_c &= \frac{\partial \log(\sigma(z))}{\partial \bz}d\bz - \frac{\partial \log(\sigma(z))}{\partial z}dz, \\
\omega_c &= \frac{\partial \log(\rho(\tilu))}{\partial \overline{\tilu}}d\overline{\tilu} - \frac{\partial \log(\rho(\tilu))}{\partial \tilu}d\tilu.
\endaligned
$$

\section{Sobolev spaces of mappings between surfaces and the Euler--Lag\-range system of equations}\label{sect-map}

\noindent
Recall that by $M$ and $N'$ we denote Riemannian surfaces equipped with conformal metrics $\sigma$ and $\rho$, respectively. Moreover, we assume that $\sigma$ and $\rho$ are both bounded. As in our standing assumptions for Theorem \ref{MainThm}, $M$ will be a surface diffeomorphic to the unit disc and $N'$ will be a compact surface without boundary. Recall further, that $N \subset N'$ denotes a subset which is sufficiently small in diameter. In particular, $N$ is covered by a single coordinate chart and is also diffeomorphic to the unit disc.

To define our minimization problem properly, we first need to recall how the notion of the Sobolev spaces is defined for mappings with values in a manifold. For a mapping $v:M \to N'$ which may take values in different charts of manifold $N'$, one typically defines the \emph{extrinsic} Sobolev space $W^{1,p}_{ex}(M,N')$ by means of the isometric Nash embedding $i : N' \to \R^k$. This embedding lets us identify the $2$-dimensional manifold $N'$ with a subset of $\R^k$ for some $k$, allowing us to define
\begin{align*} W^{1,p}_{ex}(M,N') &= \{ v : M \to N' : i \circ v \in W^{1,p}(M,\R^k)\}
\\&= \{ w \in W^{1,p}(M,\R^k) : w(x) \in i(N') \text{ for a.e. } x\in M\}.
\end{align*}
See also Pigola--Veronelli~\cite[Section 2]{pg3} and Fardoun--Regbaoui~\cite[Section 1]{fare} for the extrinsic Sobolev space.

For our purposes it will be often enough to study mappings that (at least locally) take values in a single coordinate chart, say, $(\Omega, \tilde{u})$ on $N'$. This coordinate chart allows us to define the \emph{intrinsic} Sobolev space, denoted simply by $W^{1,p}(M,\Omega)$, as
\[
W^{1,p}(M,\Omega) = \{v: M \to \Om : \tilde{u} \circ v \in W^{1,p}(M,\cc)\},
\]
where the right hand side is understood as the closure of $C^{\infty}(M, \C)$ in the Sobolev norm
\[
 \|\tilde{v}\|_{\Sob(M, \C)}:=\|\tilde{v}\|_{L^p(M, \C)}+\|D\tilde{v}\|_{L^p(M, \C)}.
\]
Further, we denote by  $W^{1,p}_{0}(M,\Omega)$ the Sobolev norm closure of smooth maps with compact support in $\mathrm{int}\, M$, $C^{\infty}_0(\mathrm{int}\, M, \C)$.

By Theorem 2 in~\cite{pg3} the intrinsic and extrinsic definitions of Sobolev spaces are the same for mappings taking values in a single coordinate chart. In particular, from this one can infer that if $v \in W^{1,p}(M,N)$, then $v \in W^{1,p}_{ex}(M,N')$.

In our energy functional \eqref{energy} we use the following inner product: Denote by $Du(z)= (u_\theta,u_{\bar{\theta}})$ and $Dv(z)=(v_\theta,v_{\bar{\theta}})$ the derivatives as in \eqref{om-pullb} of $u$ and $v$, respectively. Via the local charts of $N'$ these derivatives map into $\C^2$. We denote by
\[D_z u = (u_z, u_{\bz})\]
the regular differential matrix of a map $u : M \to N'$. The same notation is used for the differential matrix for maps from $M$ to $\cc$. The scalar product
\begin{equation}\label{our-scalar-prod}
 \langle D_z u(z), D_z v(z)\rangle := \Re(\uz \bar{v_{z}})+\Re(\ub \bar{v_{\overline{z}}} ).
\end{equation}
denotes the usual scalar product in $\rr^{2\times 2} = \cc^2$. We also define the norm $|Du|$ by the formula
\begin{equation}\label{eq-Du-norm}
  |Du(z)|^2 = |\ut|^2+|\utetb|^2= \frac{\rho(u(z))}{\sigma(z)} \left(|\uz|^2+|\ub|^2\right).
\end{equation}
Hence, the norm also depends on $\sigma$ and $\rho$. However, since we assume boundedness of both metrics, the Sobolev space arising from \eqref{eq-Du-norm} coincides with the intrinsic Sobolev space defined above and the corresponding Sobolev norms are equivalent. In our Sobolev estimates below we choose to use the following norm
$$
\| Du \|_{L^p} = \left(\int_M |Du|^pdV_M\right)^{\frac1p}.
$$

Let $u : M\rightarrow N'$ be a mapping a priori assumed to belong to $W^{1,p}_{ex}(M, N')$. For such a map let us consider the following energy integral:
\begin{equation}\label{en1}
 \mathbb{E}(u)=\int_{M} E(|Du|^2) dV_M.
\end{equation}
In this paper we are mainly concerned with energies having the stored energy function $E(|Du|^2) = \left(\epsilon^2 + |Du|^2\right)^{p/2}$ with $2 \leq p < \infty$ and $\epsilon \geq 0$, though the Euler-Lagrange systems of equations we derive in this section are also valid for more general energies.

\smallskip

From now on we will assume that $p \geq 2$ and $u \in W^{1,p}_{ex}(M, N')$ is a minimizer of the energy \eqref{en1} with $E(t) = (\ep^2 + |t|^2)^{p/2}$. Thus $u$ is, in particular, continuous and hence locally takes values in a single coordinate chart. The continuity follows for $p > 2$ by the Sobolev embedding theorem of Riemannian manifolds and for $p = 2$ from the classical theory of harmonic mappings, see e.g.~Chapter 3.8 in Jost~\cite{jost}.

Being a minimizer, the map $u$ is a stationary point of the energy \eqref{en1}. In other words, the first variation of $\mathbb{E}(u)$ vanishes. Let $\psi \in C_0^{\infty}(M,\C)$ be a test function and denote by $u^s$ a variation of $u$ such that $\left. \frac{d}{ds}  \right\vert_{s = 0} u^s = \psi$. Explicitly, $u^s$ is defined by $u^s(z) = \exp_{u(z)}(s \psi(z))$. We also abbreviate some notation by defining $\lambda(z)= E'(|Du(z)|^2)$. Since the map $u$ locally takes values in a single coordinate chart, for a test function $\psi$ with a support in a small enough neighborhood in $M$ the following computation is justified in local coordinates:
 $$
  \aligned
0 = \left. \frac{d}{ds}  \right\vert_{s = 0} \mathbb{E}(u^s) &=
 2\Re \int_{M} \lambda(z) \frac{\rho_{u}(u(z))}{\sigma(z)} (|\uz|^2+|\ub|^2)\psi +  \lambda(z) \frac{\rho(u(z))}{\sigma(z)}\left(\baruz\psi_z + \barub \psi_{\overline{z}}\right) dV_M\\
 &=-  2\Re \int_M \frac{\rho(u)}{\sigma}\left(2\lambda u_{z\bz} + \lambda_{\bz} u_z + \lambda_zu_{\bz} + 2\lambda \frac{\rho_u(u)}{\rho(u)}u_zu_{\bz} \right)\overline{\psi} dV_M,
 \endaligned
 $$
where the last equality is valid for $u \in C^2(M,N')$. Here we have slightly abused the notation, as by $\rho_u(u)$ we actually mean $(\rho \circ \tilu^{-1})_{\tilu} \circ \tilu \circ u$ where the derivative with respect to $\tilu$ is a regular complex derivative in the plane. This notational convention will be used throughout the paper.

In particular, we now find that the minimizer $u$ is a solution of the Euler--Lagrange system of equations
$$
2\lambda u_{z\bz} + \lambda_{\bz} u_z + \lambda_zu_{\bz} + 2\lambda \frac{\rho_u(u)}{\rho(u)}u_zu_{\bz} = 0.
$$
Namely, it holds that
\begin{equation}\label{u-system2}
 [\lambda(z) \uz]_{\zbar}+[\lambda(z) \ub]_{z}+2 \lambda(z)\left(\frac{\partial}{\partial u} \log \rho(u(z))\right)
 \uz\ub=0,
\end{equation}
which we interpret in the weak form, unless $u \in C^2(M,N')$. We state the above Euler--Lagrange system of equations also in our $\theta$-derivative notation:
\begin{equation}\label{u-system}
2\lambda \uttb + \lambda_{\theta} \utetb+\lambda_{\overline{\theta}} \utet=0,
\end{equation}
cf.  also the equation (9) on pg. 267 in \cite{sy}. Furthermore, we note that this equation is not equivalent to the divergent-type formulation
$[\lambda \utet]_{\tbar}+[\lambda \utetb]_{\theta}=0$.
In the literature the Euler--Lagrange systems of equations \eqref{u-system2}--\eqref{u-system} are sometimes expressed as
\begin{equation}\label{system}
 \delta \left(E'(|Du|^2)\,du\right)=0,
\end{equation}
where $\delta$ denotes the formal adjoint of the exterior differential $d$ with respect to the standard
$L^2$ inner product on vector-valued differential $1$-forms on $M$, see Hamburger~\cite{ham}.

Since we do not a priori assume that our mappings $u : M \to N'$ are smooth, we need instead to work with the weak formulation of the Euler--Lagrange systems of equations  \eqref{u-system2}--\eqref{u-system}.  Moreover, we also express the above system by using  notation that clearly emphasizes the relations between the mappings and the geometry of $M$ and $N'$. Namely, below we will use the notation $A(u)(Du, Du)$ to denote the value of the quadratic form $A$ at a point $u=u(z)\in N'$ for $z\in M$:
 \begin{equation}\label{u-system-A-fund}
   A(u)(Du, Du)=\left(\frac{\partial}{\partial u} \log \rho(u(z))\right)|Du|^2=\frac{\rho_{u}(u(z))}{\rho(u(z))}\,|Du|^2=
   \left(\Re\frac{\rho_{u}(u(z))}{\rho(u(z))}\,+\,i\Im\frac{\rho_{u}(u(z))}{\rho(u(z))}\right)\,|Du|^2,
 \end{equation}
 where the norm $|Du|$ is as in \eqref{eq-Du-norm}. The quadratic form $A:\C\times\C\to \C$ is related to the second fundamental form of $N'$ embedded to $\R^k$, see Section \ref{sect31}.

From the computations for our first variation we obtain the following weak formulation for the Euler-Lagrange system of equations.

\begin{defn}\label{phm-weak}
We say that $u:M \to N'$ is a weak solution of the Euler-Lagrange equations \eqref{u-system2}--\eqref{u-system}, if $u \in W^{1,p}_{ex}(M, N')$ satisfies
\begin{equation*}\label{u-system-weak-pre}
 \Re  \int_{M} \lambda(z) \frac{\rho_{u}(u(z))}{\sigma(z)} (|\uz|^2+|\ub|^2)\psi\, dV_M =- \Re \int_{M} \lambda(z) \frac{\rho(u(z))}{\sigma(z)}\left(\baruz\psi_z + \barub \psi_{\overline{z}}\right)\, dV_M.
\end{equation*}
or, equivalently,
\begin{equation}\label{u-system-weak}
 - \int_{M} \lambda(z) \frac{\rho(u(z))}{\sigma(z)}\langle D_z u, D_z \psi \rangle \,dV_M =\int_{M} \lambda(z) A(u)(Du, Du)\cdot \psi \,dV_M,
\end{equation}
for all $\psi\in \Sobzero (M, \C)$ such that $u$ maps the support of $\psi$ onto a single coordinate chart of $N'$.

By $\langle\cdot,\cdot\rangle$ we denote the scalar product as defined in \eqref{our-scalar-prod}, while the notation $X\cdot Y$ for $X,Y\in \C$ stands for the standard Euclidean scalar product in $\C$.
\end{defn}

\subsection{The Extrinsic Euler-Lagrange equations via the Nash embedding}\label{sect31}

To adopt certain proof techniques from the literature, we will also need to derive an Euler-Lagrange system for the map $v := i \circ u$, where $u: M \to N'$ denotes an energy-minimizer and $i:N' \to \rr^k$ denotes the isometric Nash embedding of $N'$ into Euclidean space. If $i(N')$ is a compact surface in $\rr^k$, then for sufficiently small $\epsilon$ the shortest distance projection to $i(N')$ is well-defined on the set $i(N') + B(0,\epsilon)$. Denoting such a projection by $\Pi_{N'}$, we may then apply the first variation $\Pi_{N'}(v + \epsilon \psi)$ to the map $v$, for a suitably chosen test function $\psi \in C_0^\infty(M, \R^k)$, as $v$ minimizes the energy integral
\[
\int_M |\nabla v|^p dV_M  = \int_{M} |D u|^p dV_M
\]
among maps from $M$ to $i(N')$ with given boundary data $\phi_0$.
Doing so we arrive at the Euler-Lagrange equation for $v$, namely
\begin{equation}\label{v-system-weak}
 - \int_{M} \lambda \nabla v \cdot \nabla \psi \,dV_M =\int_{M} \lambda A'(v)(\nabla v, \nabla v)\cdot \psi \,dV_M,
\end{equation}
where, for the convenience, we denote $\lambda = \lambda(|\nabla v|)$  and $A'$ stands for the second fundamental form of $N'$ in $\R^k$.

\smallskip

More precisely, $v = (v^1, \ldots, v^k) : M \to \R^k$ and the gradient is taken with respect to the metric $\sigma(z)|dz|^2$, that is,
$$
\nabla v^j = [\sigma]^{-1}dv^j = \sum_{\alpha = 1}^{2} \sum_{\beta = 1}^{2} \sigma^{\alpha \beta} \frac{ \partial v^j }{\partial x^\beta} \frac{ \partial \, }{\partial x^\alpha},
$$
where we denote by $[\sigma]^{-1}$ the inverse matrix of the metric $\sigma(z)|dz|^2$;
$$
\sigma^{\alpha\beta} =\begin{cases}
\sigma(z)^{-1}, & \alpha = \beta \\
0, & \alpha \neq \beta
\end{cases}.
$$
As the gradient is a linear map defined on the tangent space we define its norm as the Hilbert-Schmidt norm (the trace with respect to the metric), i.e.,
$$
|\nabla v|^2 =  \sum_{j = 1}^k |\nabla v^j|^2_{\sigma} = \sum_{j = 1}^k \left( \sum_{\alpha = 1}^{2} \sum_{\beta = 1}^{2} \sigma^{\alpha \beta}\frac{ \partial v^j }{\partial x^\alpha} \frac{ \partial v^j }{\partial x^\beta} \right).
$$

In particular, we have $|\nabla v| = |Du|$. Indeed, $|Du|^2$ is the trace (with respect to the metric of $M$) of the pullback under $u$ of the metric tensor of $N'$ and  for the pullback of the metric tensor $\rho$ we have for $X, Y \in TM$
$$
u^*(\rho)(X, Y) = \rho(du(X), du(Y)) = di(du(X)) \cdot di(du(Y)) = u^*(di)(X) \cdot u^*(di)(Y) = dv(X) \cdot dv(Y),
$$
where by $\rho(du(X), du(Y))$ we mean the value of the conformal metric $\rho(\tilu)|d\tilu|^2$ on $(du(X), du(Y))$. Above the second equality holds as $i$ is the isometric Nash embedding (the dot product is the standard Euclidean scalar
product of $\R^k$) and the last equality follows from $dv = d(i\circ u) =  d(u^{*}(i))= u^{*}(di)$. Now, the  trace (with respect to the metric of $M$) of the right hand side is $|\nabla v|^2$ and thus $|\nabla v|^2 = |D u|^2$.

The dot product on the right hand side of \eqref{v-system-weak} means the standard Euclidean scalar
product of $\R^k$ and
$$
 \nabla v \cdot \nabla \psi  = \sum_{\alpha = 1}^{2} \sum_{\beta = 1}^{2} \sigma^{\alpha \beta} \frac{ \partial v }{\partial x^\alpha} \frac{ \partial \psi }{\partial x^\beta}.
$$
Recall that the second fundamental form is a quadratic form on the tangent space that takes values in the normal space, that is,
$A'(v) = \nabla^{\R^k} - \nabla^{N'} : T_{u(z)}N' \times T_{u(z)}N' \to (T_{u(z)} N')^{\bot}$, where $N'$ is a submanifold of $\R^k$. Thus, in coordinates,
${A'}_{mn}^{j}(v) = \frac{\partial^2 i^j}{\partial u^m \partial u^n} - \sum_{l = 1}^{2}\frac{\partial i^j}{\partial u^l} \Gamma^l_{mn}$,
where $\Gamma_{mn}^l$ stand for the Christoffel symbols of metric $\rho$, and $A'(v)$ acts bilinearly on  $\frac{\partial u}{\partial x^\alpha} = du\left(\frac{\partial \,}{\partial x^\alpha}\right) \in TN'$:
$$
{A'}_{mn}^{j}(v)(\nabla v, \nabla v) = \sum_{\alpha = 1}^{2} \sum_{\beta = 1}^{2}\sigma^{\alpha\beta}{A'}_{mn}^{j}\left(\frac{\partial u}{\partial x^\alpha}, \frac{\partial u}{\partial x^\beta} \right).
$$
Moreover, we see straight from the coordinate definition of the second fundamental form that
$|A'(v)(\nabla v, \nabla v)| \leq C_{N'} |\nabla v|^2$,
where $C_{N'}$ depends only on geometry of $N'$ as $N'$ is compact and coefficients $A^{' j}_{mn}$ are defined via $\rho$ and the Nash embedding.

\section{The $p$-harmonic and uniformly elliptic $p$-harmonic systems}\label{sect-systems}

\noindent
As mentioned in the previous section, we will restrict ourselves to the study of two special cases of the energy functional \eqref{energy} in order to prove our main result, Theorem \ref{MainThm}. The first of these is unsurprisingly the $p$-harmonic energy
\begin{equation}\label{penergy}
\mathbb{E}(u) = \int_{M} |Du|^p dV_M,
\end{equation}
where it is always assumed that the exponent $p \geq 2$. In certain parts of the proof of Theorem \ref{MainThm} we will also need to work with an auxiliary energy functional, called the \emph{uniformly elliptic $p$-harmonic energy}, which we define as follows:
\begin{equation}\label{ep-energy}
\mathbb{E}_{\ep}(u) = \mathbb{E}_{\ep,p}(u) =\int_{M} (\ep^2+|Du|^2)^{\frac{p}{2}} dV_M.
\end{equation}
Here $\ep\in (0,1)$ is a fixed number and again $p\geq 2$. If we set $\ep=0$, then we retrieve the $p$-harmonic case \eqref{penergy}. The benefit in considering this energy lies in the fact that for $\epsilon > 0$, the associated Euler--Lagrange system of equations becomes uniformly elliptic. This makes it easier to deduce properties such as the boundary regularity for minimizers of \eqref{ep-energy}, leading us to prove the RKC-theorem first in the case $\epsilon > 0$ and then concluding the proof of Theorem \ref{MainThm} with a suitable convergence result as $\epsilon \to 0$.

Under the notation and assumptions of Theorem \ref{MainThm} and Section \ref{sect-map}, we will always denote by $u$ a minimizer of the $p$-harmonic energy \eqref{penergy} among mappings in the class $W^{1,p}_{ex}(M,N')$ with boundary values equal to the homeomorphic map $\phi_0 : \partial M \to \partial N$. Furthermore, $u^{\epsilon}$ will denote a minimizer of the energy \eqref{ep-energy} among the same class of mappings, in particular having the same boundary values $\phi_0$. Some results which we state for $u^{\ep}$ hold in the case $\ep = 0$ as well, for this consult the exact assumptions of the result in question.

Recall that as energy minimizers, mappings $u$ and $u^{\ep}$ are weak solutions to the associated Euler--Lagrange systems of equations for the energies \eqref{penergy} and \eqref{ep-energy} as in \eqref{u-system-weak} with $\lambda:=\lambda^{\ep}$, where
\begin{equation}\label{def-lam}
\lambda^{\ep}=\left(\ep^2+|\dups|^2\right)^{\frac{p-2}{2}}.
\end{equation}
In the following discussion, we often call the variational system corresponding to the case $\ep=0$ the \emph{$p$-harmonic system}, whereas the analogous system for $\ep>0$ is called the \emph{$\ep$-perturbed ($p$-harmonic) system} or \emph{perturbed ($p$-harmonic) system}.

\subsection{The $C^{1,\alpha}$-regularity of the stationary solutions of the $\ep$-perturbed systems}\label{sec-C1alp}

For the forthcoming computations we will need some a priori knowledge on the regularity of maps $u$ and $u^{\ep}$. For the mapping $u$, we explicitly assume $C^{1,\alpha}$-regularity up to the boundary of $M$ as part of Theorem \ref{MainThm}. For the $\ep$-perturbed system we find such regularity as a consequence of the uniform ellipticity. First we recall an interior regularity result:

\begin{prop}\label{jatkuvuus}
 Let $\ups : M \to N'$ denote a weak solution of the Euler--Lagrange system \eqref{u-system-weak} with $\lambda^{\ep}$ as in \eqref{def-lam}. Then $\ups$ belongs to $C^{1,\alpha}_{loc}(\mathrm{int}\,M,N')$ with $\alpha$ depending on $p$ and the geometry of $M$ and $N'$.
\end{prop}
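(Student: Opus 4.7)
The plan is to reduce Proposition~\ref{jatkuvuus} to a local problem and then apply the classical two-step scheme: first Hölder continuity, then $C^{1,\alpha}$ bootstrap. The key advantage of the $\epsilon$-perturbed system over the pure $p$-harmonic one is that the principal part
\[
\lambda^{\ep}(z)=(\epsilon^2+|Du^\epsilon|^2)^{(p-2)/2}
\]
is bounded below by $\epsilon^{p-2}>0$, so the system is non-degenerate. The right-hand side $\lambda^\epsilon A(u^\epsilon)(Du^\epsilon,Du^\epsilon)$ has quadratic growth in the gradient, which is exactly critical in dimension two.

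First I would establish interior Hölder continuity. Using the extrinsic formulation \eqref{v-system-weak} for $v=i\circ u^\epsilon$, we know $v\in L^\infty\cap W^{1,p}$ because $N'$ is compact; moreover $|A'(v)(\nabla v,\nabla v)|\leq C_{N'}|\nabla v|^2$. In dimension two this is precisely the situation handled by Morrey's classical result for minimizers of quadratic-growth functionals, or more generally by the Hildebrandt--Kaul--Widman / Giaquinta--Giusti theory. Concretely, one derives a Caccioppoli inequality for $v$ (using $\psi=\eta^2(v-\bar v_r)$ as a test function and absorbing the quadratic term by the $L^\infty$-bound), then runs a hole-filling or Campanato-type iteration to obtain the decay $\int_{B_r}|\nabla v|^p\leq C r^{p\alpha}$ on small balls, which yields $v\in C^{0,\alpha}_{loc}$.

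Once $u^\epsilon$ is continuous, at any interior point $z_0$ there is a neighborhood on which $u^\epsilon$ takes values in a single coordinate chart of $N'$, so we can return to the intrinsic system \eqref{u-system-weak} with smooth bounded coefficients. The remaining step is the bootstrap to $C^{1,\alpha}_{loc}$. For $\epsilon>0$ the operator $v\mapsto \operatorname{div}(\lambda^\epsilon Dv)$ is a non-degenerate $\epsilon$-regularized $p$-Laplace-type operator, and the standard difference-quotient / frozen-coefficient machinery applies: one first gets $u^\epsilon\in W^{2,2}_{loc}$ by differentiating the equation and testing with the difference quotient of $u^\epsilon$, then iterates via Calderón--Zygmund and Sobolev embedding to reach $W^{1,q}_{loc}$ for some $q>2$, so that the right-hand side lies in $L^s$ for some $s>1$ and Schauder/Morrey--Campanato estimates close the loop. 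This is the line pursued e.g. in Hamburger~\cite{ham} for $p$-harmonic-type systems into manifolds, and in Uhlenbeck/Tolksdorf/DiBenedetto for the scalar $p$-Laplacian; the vector case under consideration reduces to these after the continuity step.

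The main obstacle is the quadratic (critical) growth of the nonlinear right-hand side in $|Du^\epsilon|$, since in general dimension this forces one into the realm of partial regularity. The saving grace here is that $\dim M=2$: in two dimensions the Sobolev embedding $W^{1,p}\hookrightarrow L^\infty$ (for $p>2$) or the borderline case $p=2$ together with the compactness of $N'$ and the continuity step above removes the critical growth obstruction, and everything becomes subcritical after continuity is established. The exponent $\alpha$ produced by the argument depends only on $p$, the ellipticity constants of the principal part (which depend on $p$), and the $C^1$-bounds on the metrics $\sigma$ and $\rho$, i.e.\ on the geometry of $M$ and $N'$, as claimed.
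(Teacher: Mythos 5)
Your argument is essentially the route the paper takes: its proof of this proposition is a two-line citation to the first part of Sacks--Uhlenbeck's Proposition 2.3 (continuity for critical points of the non-degenerate perturbed energy) combined with Morrey's Theorem 1.11.1 (the two-dimensional $C^{1,\alpha}$ theory for quasilinear elliptic systems with quadratic growth in the gradient), and your sketch unpacks exactly what those references carry out. The one point to tidy is the continuity step, where absorbing the quadratic right-hand side requires small oscillation rather than a mere $L^\infty$-bound; this is harmless here because for $p>2$ continuity already follows from the Sobolev embedding in dimension two, and for $p=2$ one falls back on the classical harmonic-map theory, as the paper itself notes.
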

\begin{proof}
 As we are interested in the local regularity property of map $\ups$, we are able to exploit a similar argument as in the first part of the proof for Proposition 2.3 in Sacks--Uhlenbeck~\cite{sau}, which also employs Theorem 1.11.1 in Morrey~\cite{mor}, cf. conditions (1.10.7'') on pg. 33 in \cite{mor}.
Together, these imply that $\ups$ is $C^{1,\alpha}(\Om,N')$ for all open $\Om\subset \mathrm{int}\,M$ as desired.
\end{proof}

Similar regularity results holding up to the boundary of the domain are much less prominent in the literature, and this is the reason for our a priori regularity assumption on the $p$-harmonic map $u$ (i.e. the case $\ep = 0$). However, for the uniformly elliptic systems a result which holds up to the boundary can be found in the paper of Beck~\cite{beck}, allowing us to state the following regularity observation.

\begin{theorem}[Theorem 1.4 in \cite{beck}]\label{Thm14-Beck}
 Let $p\geq 2$ and $\Om\subset \C$ be a bounded domain of class $C^{1,2\beta}$ for some $\beta<1/2$. Moreover, let map $g\in C^{1,2\beta}(\overline{\Om}, \C)$. If $u \in \Sob(\Om, \C)$ is a local minimizer of the
energy functional $\mathcal{F}(u,\Om)$, under the growth assumptions (2a--b), (4), (7), (9a--b) in \cite{beck} and boundary values $u=g$ on $\bdy \Om$, then $u \in C^{1,\beta}(\overline{\Om}, \C)$.
\end{theorem}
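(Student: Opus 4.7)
The plan is to prove the boundary $C^{1,\beta}$ regularity via the method of $\mathcal{A}$-harmonic approximation combined with an excess-decay iteration and the Campanato characterization of Hölder spaces. First, at any boundary point $x_0\in\partial\Omega$, I would locally straighten $\partial\Omega$ by a $C^{1,2\beta}$ diffeomorphism, transforming the problem into a minimization on a half-ball $B_r^+$ with a flat portion of boundary on $\{y_2=0\}$. The transformation pushes the $C^{1,2\beta}$-regularity of $\partial\Omega$ into the coefficients of the transformed integrand, which retain the growth hypotheses of Beck after a routine check. Subtracting an extension of $g$ and modifying the integrand reduces the setting to a minimizer with zero trace on the flat portion of the boundary.

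Next, I would set up interior and boundary Caccioppoli inequalities adapted to the $p$-growth structure, together with a higher-integrability estimate of Gehring--Giaquinta--Modica type up to the flat boundary. The heart of the argument is the boundary $\mathcal{A}$-harmonic approximation lemma: on a half-ball $B_r^+$, compare the minimizer $u$ with the solution $h$ of the linear constant-coefficient system whose coefficient matrix is the Hessian of the integrand frozen at an averaged value of $Du$, subject to $h=u$ on $\partial B_r^+$. Classical linear elliptic theory yields $h\in C^{1,\alpha}(\overline{B_{r/2}^+})$ for any $\alpha<1$, with quantitative bounds on its excess, because $h$ inherits the vanishing trace of $u$ on the flat piece of the boundary.

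Using the weak Euler--Lagrange system and the approximation lemma, one estimates the deviation of $Du$ from $Dh$ in $L^p$, producing a boundary excess decay of the form
\[
\Phi(x_0,\tau r)\;\leq\; C\bigl(\tau^{2+2p\beta}+\omega(r)\bigr)\,\Phi(x_0,r)+C\,r^{2p\beta},
\]
where $\Phi(x_0,r)$ denotes a suitable shifted excess of $Du$ on $B_r(x_0)\cap\Omega$ and $\omega(r)\to 0$ as $r\to 0$ encodes the $C^{1,2\beta}$-continuity of the straightened coefficients and boundary data. Iterating this estimate via the standard Giaquinta--Giusti lemma yields Morrey-type decay $\Phi(x_0,r)\leq Cr^{2p\beta}$, and Campanato's theorem then upgrades this to $Du\in C^{0,\beta}(\overline\Omega)$, i.e.\ $u\in C^{1,\beta}(\overline\Omega,\C)$.

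The main obstacle is controlling the $p$-Laplace-type degeneracy of the Hessian when comparing with the linearised $\mathcal{A}$-harmonic problem, since for $p>2$ the ellipticity constants depend on $|Du|$. The standard remedy is to work with a shifted excess functional in the spirit of DiBenedetto--Manfredi and Duzaar--Mingione, and to split the analysis into a non-degenerate regime, where $|Du|$ is comparable to its average, and a degenerate regime, where the excess is already small for a trivial reason. A second delicate point is the extension of $g$: it must preserve $C^{1,2\beta}$ regularity, and the transformed integrand must still satisfy the growth conditions (2a--b), (4), (7), (9a--b) of Beck; these verifications are routine but notation-heavy.
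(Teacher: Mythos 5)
This statement is not proved in the paper at all: it is quoted verbatim as Theorem~1.4 of Beck~\cite{beck} and used as a black box (the authors only verify, in Section~\ref{sec-C1alp}, that their integrand $W_\ep$ satisfies Beck's hypotheses (2a--b), (4), (7), (9a--b)). So there is no internal proof to compare yours against; the relevant comparison is with Beck's own argument, whose skeleton the authors later dissect in Section~\ref{sect-homotopy} when tracking constants: a Gehring-type higher-integrability lemma up to the boundary (Beck's Lemma~3.3), comparison estimates for a frozen-coefficient problem on half-balls (Lemmas~4.1 and~4.4), a linear decay estimate (Proposition~4.2, which rests on Campanato's work), and a final Campanato-type excess decay $\int_{B_r^+}|V(Du)-(V(Du))_{r,x_0}|^2\leq c\,r^{2(1+\lambda)}$ yielding H\"older continuity of the gradient. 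Your outline---boundary flattening by a $C^{1,2\beta}$ chart, reduction to zero trace on the flat portion, boundary Caccioppoli plus Gehring, comparison with a constant-coefficient $\mathcal{A}$-harmonic map, excess decay iteration, and Campanato's characterization---identifies exactly this architecture, including the genuine difficulty of the $p$-degeneracy handled via a shifted excess and a degenerate/non-degenerate dichotomy. As a proof, however, it remains a sketch: the actual derivation of the decay inequality, the approximation lemma in the degenerate regime, and the verification that the straightened integrand retains Beck's structure conditions are precisely where the work lies, and they are deferred to ``standard'' or ``routine'' steps. One small bookkeeping issue: the exponents $\tau^{2+2p\beta}$ and $r^{2p\beta}$ in your decay inequality do not match what Campanato's theorem requires in dimension two (one needs decay of order $r^{2+2\beta}$ for the normalized $L^2$-excess of $V(Du)$ to conclude $Du\in C^{0,\beta}$); the factor $p$ should not appear in the exponent. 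Since the task here is only to import the theorem, citing \cite{beck} is the honest route, and your sketch is best read as a correct summary of that reference rather than a self-contained proof.
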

In fact, Beck's result applies to a wider class of energy functionals $\mathcal{F}(u,\Om)$ than our  $\mathbb{E}_{\ep}(u)$, see \cite[formula (1)]{beck}, Theorem 1.4 and Corollary 1.5 in \cite{beck}. Since this result is only stated in the flat case, we are unable to apply it at this moment due to the fact that the target space $N'$ might not be diffeomorphic to $\C$. However, in Section \ref{sect-geometry} we will show that a minimizer indeed takes values in a single coordinate chart. Hence at this point let us assume that the map $\ups$ takes values in a single coordinate chart -- this allows us to reduce to the flat case via coordinate maps and obtain the following corollary:

\begin{cor}\label{BeckCorollary} Let $\ep > 0$ and suppose that the map $u^{\ep}:M \to N'$ minimizes the energy \eqref{ep-energy} among maps in $W^{1,p}_{ex}(M,N')$ with fixed $C^{1, \alpha}$-smooth boundary values $\phi_0$ on $\partial M$. Suppose additionally that the image set $u^{\ep}(M)$ is contained in a single coordinate chart. Then $u^{\ep}$ is $C^{1,\beta}$-regular up to the boundary for some $\beta > 0$.
\end{cor}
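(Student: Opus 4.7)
The strategy is a direct reduction to Beck's result (Theorem~\ref{Thm14-Beck}). Since $\ups(M)$ is contained in a single coordinate chart $(\Om_{N'},\tilu)$ on $N'$, post-composition with $\tilu$ turns $\ups$ into a planar map $v:=\tilu\circ\ups : M\to \C$. On $M$, the isothermal coordinate $z$ of Section~\ref{sec-basic} gives charts into the plane; for boundary points the chart is, after a conformal post-composition, mapped into the half-disc $B(0,1)\cap\{\Im z\ge 0\}$, so the chart domain is $C^\infty$-smooth. Pulling back $\mathbb{E}_\ep$ converts our minimization problem into a planar variational problem on a smooth (half-disc) domain with Dirichlet data $g:=\tilu\circ\phi_0$, which inherits the $C^{1,\alpha}$-smoothness of $\phi_0$.

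In these coordinates the integrand takes the form
\[
F(z,v,\xi)=\sigma(z)\left(\ep^2+\frac{\rho(\tilu^{-1}(v))}{\sigma(z)}|\xi|^2\right)^{\!p/2},
\]
where $\xi$ plays the role of the Euclidean gradient. The plan is then to verify Beck's structural conditions (2a--b), (4), (7), (9a--b) on $F$. Two-sided $p$-growth follows from the standing upper and lower bounds on $\sigma$ and $\rho$. For $\ep>0$ the integrand is uniformly elliptic on bounded sets of $\xi$: the Hessian $D_\xi^2 F$ has eigenvalues bounded below by a constant multiple of $\ep^{p-2}$ and above by a constant multiple of $(\ep^2+|\xi|^2)^{(p-2)/2}$, the constants depending only on $\|\sigma\|_\infty$, $\|\rho\|_\infty$ and their reciprocals. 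The Hölder regularity of $F$ in $(z,v)$ with exponent $2\beta\le\alpha$ is inherited from the $C^\infty$-smoothness of $\sigma$, $\rho$ and $\tilu^{-1}$.

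With these hypotheses in place, Theorem~\ref{Thm14-Beck} (applied in each local boundary chart, with $2\beta\le\alpha$) produces $v\in C^{1,\beta}(\overline{\Om},\C)$ for some $\beta>0$. Since $\tilu$ is a smooth diffeomorphism onto its image, the regularity transfers back: $\ups$ is $C^{1,\beta}$ up to $\partial M$ in every boundary chart. Interior regularity is already supplied by Proposition~\ref{jatkuvuus}, and a finite patching (by compactness of $\partial M$) yields global $C^{1,\beta}$-regularity.

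The main technical obstacle I expect is the careful verification of Beck's structural conditions (2a--b), (4), (7), (9a--b) for the variable-coefficient integrand $F$. These are standard for the model integrand $(\ep^2+|\xi|^2)^{p/2}$, but one has to track how the $(z,v)$-dependence coming from $\sigma$, $\rho$ and $\tilu^{-1}$ interacts with the monotonicity, ellipticity and continuity relations, and in particular ensure it carries at least the Hölder exponent $2\beta$. Once these bookkeeping checks are done, the rest is an immediate application of Beck's theorem together with the chart patching.
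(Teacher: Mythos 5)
Your proposal follows essentially the same route as the paper: transfer the problem to the plane via the coordinate charts $z$ on $M$ and $\tilu$ on $N'$, verify Beck's structural conditions (2a--b), (4), (7), (9a--b) for the resulting $p$-growth integrand with coefficients controlled by the bounds on $\sigma$ and $\rho$, and apply Theorem~\ref{Thm14-Beck}. One small caution: Beck's condition (9b) requires the lower Hessian bound $\nu(1+|\zeta|)^{p-2}|\xi|^2$, not merely a uniform-ellipticity bound of order $\ep^{p-2}|\xi|^2$ as you state; for $p\geq 2$ and $\ep>0$ the integrand does satisfy the stronger inequality (cf.\ \eqref{f-ep-ass9}), so this is only a matter of recording the correct form of the estimate.
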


For the sake of completeness of our discussion, we now provide an argument that energies \eqref{ep-energy} satisfy the assumptions of Theorem~\ref{Thm14-Beck}. Another motivation for somewhat detailed verification of assumptions from \cite{beck} is that in Section~\ref{sect-homotopy} we study similar energies, see \eqref{Ft-system}.

Recall, that under our assumptions $M$ is simply-connected Riemannian surface and, thus, covered by one coordinate map. Moreover, in Corollary \ref{BeckCorollary} we assume that the image $u^{\ep}(M)$ is contained in a single coordinate chart on $N'$ with metric $\tilu$. Furthermore, we denote by $\Om:=z(\mathrm{int}\,M)$, the image of $\mathrm{int}\,M$ under the coordinate map $z : M \to \bar{\Om}$. With this notation we define the energy $W_\ep:\Om\times \C\times \C^2\to \R$, associated with $\mathbb{E}_{\ep}$, as follows:
\begin{equation}\label{f-ep-energy}
W_\ep(z^{-1},\tilu,\zeta):=\left(\ep^2+\frac{\rho(\tilu)}{\sigma(\zinv(z^{-1}))}|\zeta|^2\right)^{\frac{p}{2}}|J_{\zinv}(z^{-1})|,
\end{equation}
where $J_{\zinv}(z^{-1})$ stands for the Jacobian of a map $\zinv$ (the inverse map for $z$) at the point $z^{-1}\in \Om$. (Here, the inverse map of $z$ is denoted by $\zinv$, instead of $z^{-1}$ in order to avoid the confusion with notation
for a point $z^{-1}:=\zinv(z)\in \Om$).

One verifies that a map $\zeta\to W_\ep(\cdot,\cdot, \zeta)$ is of class $C^2$, i.e. the second derivative operator of $W_\ep(z^{-1},\tilu,\zeta)$ with respect to $\zeta$ variable is continuous in $\Om\times \C\times \C^2$. Indeed, this follows by the direct twice differentiation of $W_\ep(\cdot,\cdot, \zeta)$ with respect to variables $\zeta_1,\ldots, \zeta_4$ with $\zeta=(\zeta_1, \zeta_2, \zeta_3, \zeta_4)\in \R^4\approx \C^2$, also by the fact that $\ep>0$, and so the expression $\left(\ep^2+\frac{\rho(\cdot)}{\sigma(\zinv(\cdot))}|\zeta|^2\right)^{\frac{p}{2}-2}$ appearing in $\frac{\partial^2}{\partial \zeta_i \partial \zeta_j} W_\ep(\cdot,\cdot, \zeta)$ for $i,j=1,\ldots, 4$ is defined for all $\zeta$. Moreover, it holds that
$$
\nu |\zeta|^p \leq W_\ep(z^{-1},\tilu,\zeta) \leq L(1+|\zeta|)^p
$$
with constants
\begin{equation}\label{nuL-constants}
 \nu:=\min_{\tilu\in \C} \min_{\Om}\frac{\rho(\tilu)}{\sigma(\zinv(z^{-1}))}\min_{\Om}|J_{\zinv}(z^{-1})|>0\quad \hbox{and} \quad
 L:=\max\left \{1,\max_{\tilu\in \C} \max_{\Om}\frac{\rho(\tilu)}{\sigma(\zinv(z^{-1}))}\right\}\max_{\Om}|J_{\zinv}(z^{-1})|.
\end{equation}

Therefore, we verified conditions (2a--b) and (9a) in \cite{beck}. Similarly, by employing the mean value theorem for functions, we check \cite[assumption (4)]{beck}:
\begin{align}
 &|W_\ep(z_1^{-1},\tilu_1,\zeta)-W_\ep(z_2^{-1},\tilu_2,\zeta)| \nonumber \\
 &=\left(\ep^2+\frac{\rho(\tilu_1)}{\sigma(\zinv(z_1^{-1}))}|\zeta|^2\right)^{\frac{p}{2}}|J_{\zinv}(z_1^{-1})|-\left(\ep^2+\frac{\rho(\tilu_2)}{\sigma(\zinv(z_2^{-1}))}|\zeta|^2\right)^{\frac{p}{2}}|J_{\zinv}(z_2^{-1})|\nonumber \\
 &=\left[\left(\ep^2+\frac{\rho(\tilu_1)}{\sigma(\zinv(z_1^{-1}))}|\zeta|^2\right)^{\frac{p}{2}}-\left(\ep^2+\frac{\rho(\tilu_2)}{\sigma(\zinv(z_2^{-1}))}|\zeta|^2\right)^{\frac{p}{2}}\right]|J_{\zinv}(z_1^{-1})|\nonumber \\
&\phantom{AA} +\left(\ep^2+\frac{\rho(\tilu_2)}{\sigma(\zinv(z_2^{-1}))}|\zeta|^2\right)^{\frac{p}{2}}\left(|J_{\zinv}(z_1^{-1})|-|J_{\zinv}(z_2^{-1})|\right)\nonumber \\
 &\leq c(p, \epsilon)L(1+|\zeta|)^p\left(\left\|\frac{\partial G}{\partial {\tilu}}\right\|_{L^{\infty}(\Om)}|\tilu_1-\tilu_2|+\left\|\frac{\partial G}{\partial {z^{-1}}}\right\|_{L^{\infty}(\Om)}|z_1^{-1}-z_2^{-1}|\right)\big\|J_{\zinv}\big\|_{L^{\infty}(\Om)} \nonumber \\
 &\phantom{AA}+\frac{1}{|J_{\zinv}(z_2^{-1})|}L(1+|\zeta|)^p\big\|\nabla |J_{\zinv}|\big\|_{L^{\infty}(\Om)}|z_1^{-1}-z_2^{-1}|,  \label{f-ep-ass4} \\
 & \leq\! c(p, \epsilon)L(1+|\zeta|)^p\!\left(\big\|\frac{1}{J_{\zinv}}\big\|_{L^{\infty}(\Om)}+\big\|J_{\zinv}\big\|_{L^{\infty}(\Om)}\right)\!\Bigg[\left\|\frac{\partial G}{\partial {\tilu}}\right\|_{_{L^{\infty}(\Om)}}\!|\tilu_1-\tilu_2| \nonumber \\
 &\phantom{AAAAAAAAAAAAAAAAAAAAAAAAAAAAAAA}+\left(\left\|\frac{\partial G}{\partial {z^{-1}}}\right\|_{_{L^{\infty}(\Om)}}\!+\big\|\nabla J_{\zinv}\big\|_{_{L^{\infty}(\Om)}}\!\right)\!|z_1^{-1}-z_2^{-1}|\Bigg],  \label{f-ep-ass44}
\end{align}
where $G(\tilu,z^{-1}):=\frac{\rho(\tilu)}{\sigma(\zinv(z^{-1}))}$ and the boundedness of $L^{\infty}$ norm of $\nabla G$ follows from the corresponding properties of the geometries $\rho$ and $\sigma$. Moreover, since by assumptions $M$ is covered by one map, we may as well assume that it is sense-preserving and let $|J_{\zinv}|=J_{\zinv}$. Then, in \eqref{f-ep-ass4}, \eqref{f-ep-ass44} we may instead consider $\|\nabla J_{\zinv}\|_{L^{\infty}(\Om)}$ and estimate it in terms of (second order) partial derivatives of $\zinv$. From inequality \eqref{f-ep-ass44} we infer that \cite[assumption (4)]{beck} holds and that the modulus of continuity with respect to variables $(\tilu, z^{-1})$, denoted in \cite{beck} by $\omega$, depends on the geometry of $M$ and the coordinate chart on $N'$, $p$ and $\epsilon$ only. This estimate a priori depends also on the choice of map $z$ and constants arising from estimates of the first and the second partials of $\zinv$. However, when changing the coordinate system on $M$, we change values of these constants by a factor related to a supremum norm of the change of variables, its Jacobian and Hessian only.

Observe that by \eqref{f-ep-ass44} condition (7) in \cite{beck} is trivially satisfied as, upon constant multiplication, the modulus of continuity, denoted by $\omega_1$ in \cite[formula (7)]{beck}, is in our case $\omega_1(t):=t$. Thus, for $t\in (0,1)$ one can choose any $\alpha_1\in (0,1)$, whereas for $t>1$ we utilize the boundedness of $z_1^{-1}, z_2^{-1}$ by $\diam M$ and the boundedness of $\tilu_1, \tilu_2$ by diameter of the coordinate chart on $N'$, in order to bound $\omega_1$ by a constant.

Finally, we verify by the direct computations, standard for the $p$-harmonic type energies, that $W_\ep$ satisfies assumption (9b) in \cite{beck}, i.e.
\begin{equation}\label{f-ep-ass9}
\nu(1+|\zeta|)^{p-2}|\xi|^2\leq D_{\zeta \zeta}W_\ep(z^{-1},\tilu,\zeta)\xi\cdot\xi \leq L (1+|\zeta|)^{p-2}|\xi|^2
\end{equation}
for any $\xi\in \C^{2}$. Here $\nu$ and $L$ are similar to constants in \eqref{nuL-constants} and, additionally,  depend on $p$ in a bounded manner.

\smallskip

Recall that $\phi_0$ denotes a homeomorphism from $\partial M$ into $\partial N$, see assumptions of Theorem~\ref{MainThm}. The above discussion allows us to apply Theorem~\ref{Thm14-Beck}  to the Dirichlet problem for the energy
\begin{equation}
 \int_{\Om} W_\ep(z^{-1}, u, Du) \label{f-ep-energy2}
\end{equation}
with the boundary data $z(\phi_0)$.   The minimizer $\ups$ of \eqref{ep-energy} (for $\ep > 0$)  corresponds to a minimizer $\tilu\circ\ups\circ \zinv:\Om \to \C$ of \eqref{f-ep-energy2}. Thus we obtain Corollary~\ref{BeckCorollary} provided we show existence and uniqueness of minimizers to~\eqref{ep-energy}. This discussion will be postponed till Section~\ref{sect-ex}. It turns out that the existence of minimizers for \eqref{ep-energy} and \eqref{f-ep-energy2} follows in the usual way from the direct methods in the calculus of variations, since the energy functionals in subject are continuous, quasi-convex with polynominal growths, bounded from below on the corresponding Sobolev spaces and weakly lower semicontinuous. However, for the uniqueness part we need to understand better the geometry of minimizers. We will study this issue in the next sections.

\subsection{Positivity of the Jacobian implies smoothness}\label{subs-4.3}

In the next section we prove Theorem~\ref{Ttheorem}, in which we assume that the Jacobian determinant of our  minimizer of~\eqref{energy} is positive. In order to justify the computations in Theorem~ \ref{Ttheorem}, we need to assume that  maps are at least $C^3$-smooth. However, any stationary solution which has positive Jacobian is already smooth, as proven in the following proposition.

\begin{prop}\label{PosJacSmoothness} Let $\ups$ denote a stationary solution of the energy \eqref{ep-energy}, including the case $\ep = 0$. Suppose that $J_{\ups}>0$ in $M$. Then $\ups$ is $C^{\infty}$-smooth in $M$.
\end{prop}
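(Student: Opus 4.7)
\medskip

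\textbf{Proof plan.} The strategy is to convert the positivity of the Jacobian into non-degeneracy of the Euler--Lagrange system and then bootstrap via standard elliptic regularity.

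The first step is to observe that the positivity of $J_{\ups}$ forces $|D\ups|$ to be bounded away from zero on compact subsets of $M$. Indeed, using the expressions $J_{\ups} = \tfrac{\rho(\ups)}{\sigma}(|\epuz|^2 - |\epub|^2)$ and $|D\ups|^2 = \tfrac{\rho(\ups)}{\sigma}(|\epuz|^2 + |\epub|^2)$, we obtain immediately $|D\ups|^2 \geq J_{\ups} > 0$ pointwise. By Proposition~\ref{jatkuvuus} the map $\ups$ is already in $C^{1,\alpha}_{loc}(\mathrm{int}\,M, N')$, so $J_{\ups}$ is continuous, and therefore bounded below by a positive constant on each compact subset. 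Consequently $\lambda^{\ep} = (\ep^2 + |D\ups|^2)^{(p-2)/2}$ is a strictly positive $C^\infty$ function of its argument on the range of $D\ups$ restricted to any such compact set, regardless of whether $\ep = 0$ or $\ep > 0$ and for every $p \geq 2$.

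Next, I would rewrite the Euler--Lagrange system \eqref{u-system2} in the non-divergence form
\[
a^{\alpha\beta}_{ij}(D\ups)\,\partial_\alpha\partial_\beta \ups^j + b_i(z,\ups,D\ups) = 0,
\]
where $a^{\alpha\beta}_{ij}(\xi) = \lambda^{\ep}(\xi)\delta^{\alpha\beta}\delta_{ij} + (p-2)(\ep^2 + |\xi|^2)^{(p-4)/2}\xi^\alpha_i \xi^\beta_j$ (modulo the conformal factor $\sigma$) and the lower order term $b_i$ collects the derivatives of $\lambda^{\ep}$ in $z$ through $\sigma,\rho$ together with the target curvature term $2\lambda^{\ep}(\partial_{\tilu}\log\rho)\, \epuz\epub$. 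On the region where $|D\ups|$ is bounded below, the symbol $a^{\alpha\beta}_{ij}$ satisfies the Legendre--Hadamard condition uniformly, with eigenvalues comparable to $|D\ups|^{p-2}$ and $(p-1)|D\ups|^{p-2}$, so the system is uniformly elliptic locally on compact subdomains of $\mathrm{int}\,M$. Moreover, $a^{\alpha\beta}_{ij}$ and $b_i$ are $C^\infty$ functions of $(z,\ups,D\ups)$ because the metrics $\sigma$ and $\rho$ are smooth and the argument of $\lambda^{\ep}$ stays away from the degeneracy locus $|\xi| = 0$.

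With these ingredients in place, the final step is a standard Schauder bootstrap. Starting from $\ups \in C^{1,\alpha}_{loc}$, the coefficients $a^{\alpha\beta}_{ij}(D\ups)$ and $b_i(z,\ups,D\ups)$ are of class $C^{0,\alpha}_{loc}$, so linear Schauder estimates applied to the frozen-coefficient system upgrade $\ups$ to $C^{2,\alpha}_{loc}$. Then the coefficients lie in $C^{1,\alpha}_{loc}$, Schauder theory produces $C^{3,\alpha}_{loc}$, and iterating this procedure yields $\ups \in C^{k,\alpha}_{loc}(M,N')$ for every $k$, hence $C^\infty$. The only subtlety to check carefully is the non-degeneracy step, where one must avoid regions where $|D\ups|$ approaches zero; this is precisely where the positive-Jacobian hypothesis is used, and it is the single place the argument would break down in the general case. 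After that, the bootstrap is routine.
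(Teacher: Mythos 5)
Your overall strategy coincides with the paper's: the positive Jacobian forces $|D\ups|^2 \geq J_{\ups} > 0$, hence $\lambda^{\ep}$ stays away from its degeneracy locus, the linearization is uniformly elliptic on compact subsets of $\mathrm{int}\,M$ (for $\ep=0$ as well as $\ep>0$), and one then bootstraps. The pointwise inequality $|D\ups|^2\geq J_{\ups}$ and the use of Proposition~\ref{jatkuvuus} to make this quantitative on compacta are exactly right.

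The gap is in the first step of the bootstrap. You propose to rewrite \eqref{u-system2} in the non-divergence form $a^{\alpha\beta}_{ij}(D\ups)\,\partial_\alpha\partial_\beta \ups^j + b_i = 0$ and then apply linear Schauder estimates to the frozen-coefficient system to upgrade $\ups$ from $C^{1,\alpha}_{loc}$ to $C^{2,\alpha}_{loc}$. But the Euler--Lagrange system is only known to hold in the weak divergence form of Definition~\ref{phm-weak}; the expansion $[\lambda^{\ep}\epuz]_{\zbar} = \lambda^{\ep}_{\zbar}\,\epuz + \lambda^{\ep}\,u^{\ep}_{z\zbar}$, with $\lambda^{\ep}$ a function of $|D\ups|^2$, is purely formal for a map that is merely $C^{1,\alpha}$, since it presupposes that $D\ups$ is itself weakly differentiable. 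Likewise, interior Schauder estimates for non-divergence systems are a priori estimates for solutions already possessing two derivatives (classical or $W^{2,p}$); they do not by themselves convert a $C^{1,\alpha}$ weak solution of the divergence-form system into a strong solution of the non-divergence one. This is precisely where the paper's proof spends most of its effort: following Duzaar--Fuchs \cite{duzaarfuchs}, it first proves $\vps = i\circ\ups \in W^{2,2}_{loc}$ by a difference-quotient (Caccioppoli-type) argument, using the non-vanishing of the differential to bound $|\Delta_{h,i}\nabla\vps|^2$ in terms of $\lambda^{\ep}$-weighted quantities. Only after that may one legitimately differentiate the equation, obtain a linear uniformly elliptic system for the first derivatives with H\"older-continuous coefficients, and run the self-improving iteration you describe. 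With the $W^{2,2}_{loc}$ step inserted your argument closes; without it, the passage from $C^{1,\alpha}$ to $C^{2,\alpha}$ is unjustified.
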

\begin{proof} It is a type of classical result that the solutions to many nonlinear divergence-type equations are smooth in the set where their differential does not vanish. While we did not find an exact reference stating this to hold for the $\epsilon$-system arising from the energy \eqref{ep-energy}, we may adopt the lines of the proof for example from Lemma 2.1 in Duzaar~\cite{duzaar}. There, the claim of the proposition is shown for $p$-harmonic mappings between manifolds, which takes care of the case $\epsilon = 0$. The underlying assumption in \cite{duzaar} is that the manifold $M$ is at least three-dimensional, but the proof of the referenced Lemma 2.1 never uses this fact and, in fact, the same proof applies in the two dimensional case, as well.

As for the case $\epsilon > 0$, we briefly recall the arguments for the case $\epsilon = 0$ in \cite{duzaar} to verify that the same proof goes through for $\ep>0$ as well. The proof is essentially divided into two steps. First one shows that the map $\vps = i \circ \ups$ in fact lies in $W^{2,2}_{loc}(M, \R^k)$, where we have Nash embedded $N'\,$ to $\R^k$ by $i$. After showing this, one is able to differentiate the Euler-Lagrange equations in a weak sense to arrive at a linear second-order equation for the first derivatives of $\vps$. This linear equation becomes uniformly elliptic due to the fact that the differential of $\vps$ does not vanish. The theory of elliptic linear equations then yields a self-improving estimate on the regularity of the derivatives of $\vps$, which can be iterated further to obtain the required smoothness.

\smallskip

We start with the argument to show that $\vps\in W^{2,2}_{loc}(M, \R^k)$, for which we adopt the arguments of Lemma 2.2 in Duzaar--Fuchs~\cite{duzaarfuchs}. Our goal here is to justify that the proof of the lemma also goes through in the case $\ep > 0$, yielding that $(\ep^2 + |\nabla\vps|^2)^{(p-2)/4}\nabla\vps$ is in the regularity class $W^{1,2}_{loc}(M, \R^k)$. Note that all the calculations are done via the coordinate map $z$ and thus $\vps\,$ is a map from planar domain to $\R^k$. Let us denote by $\lambda^{\ep}(t) = (\ep^2 + t)^{(p-2)/2}$. Upon computations similar to those in the proof of~\cite[Lemma 2.2]{duzaarfuchs} one verifies that the estimates in the proof leading up to formula (2.6) in \cite{duzaarfuchs} hold when $|Du|^{p-2}$ is replaced with $\lambda^{\ep}(|\nabla\vps|^2)$ and $|Du|^{p-4}$ with $d\lambda^{\ep}/dt(|\nabla\vps|^2)$, modulo a constant appearing from the differentiation with respect to $t$. Indeed, let us present the key steps of the reasoning. Instead of the key equation (1.2) in~\cite{duzaarfuchs}  we consider \eqref{v-system-weak} for $\lambda:=\lambda^{\ep}(|\nabla\vps|^2)$ and test functions $\psi\in C_{0}^1(M, \R^k)$:
\begin{equation}\label{prop44-v-system-weak}
 \int_{M} \lambda^{\ep}(|\nabla\vps|^2) \left( \nabla \vps \cdot \nabla \psi +A'(\vps)(\nabla \vps, \nabla \vps)\cdot \psi\right) \,dV_M=0,
\end{equation}
where  $A'$ stands for the second fundamental form of $N'$ in $\R^k$ (cf. the discussion in Section~\ref{sect31}). Upon applying the coordinate map $z: M \to \overline{\Om}$, where $\Om=z({\rm int}M)$, we obtain the corresponding weak formulation for a map from a planar domain to $\R^k$: 
\begin{equation}\label{prop44-v-system-weak2}
 \int_{z(M)} \lambda^{\ep}(|\nabla\vps(t)|^2) \left( \nabla \vps(t) \cdot \nabla \psi(t) +A'(\vps(t))(\nabla \vps(t), \nabla \vps(t))\cdot \psi(t)\right) |J_{\zinv}(t)|\,d\mathcal{L}^2=0,
\end{equation}
where $t=z^{-1}\in \Om$ is the point corresponding to the given $z\in M$ under $\zinv$ the inverse map of $z$. In order to simplify further presentation, in what follows we denote $\vps(t)$ by $\vps$. Next, as in \cite{duzaarfuchs}, for a ball $B_r\subset z(M)$ we consider the nonnegative test function $\phi\in C_0^{\infty}(B_{\frac32 r}, \R)$, such that $\phi\equiv 1$ on $B_r$ and $|\nabla \phi|\leq \frac{4}{r}$. As in the proof of~\cite[Lemma 2.2]{duzaarfuchs} we consider $\Delta_{h,i}\vps$, the difference quotient in the $i$-direction, $i = 1, 2$, i.e., $\Delta_{h,i}\vps = (1/h)(\vps(z + he_i) - \vps(z))$. Notice that $\Delta_{h,i} \vps \in \R^k. $Furthermore, for the test function $\phi^2\Delta_{h,i}\vps$ we get a counterpart of estimate (2.3) in \cite{duzaarfuchs}:
\begin{align}
0&=\int_{B_{2r}} \Delta_{h,i}\left(\lambda^{\ep}(|\nabla\vps|^2) \nabla \vps \right)\cdot \nabla(\phi^2\Delta_{h,i}\vps)+ \Delta_{h,i}\left(A'(\vps)(\nabla \vps, \nabla \vps) J_{\zinv}\right)\cdot\nabla(\phi^2\Delta_{h,i}\vps) \nonumber \\
&\geq \int_{B_{2r}} \phi^2 \Delta_{h,i}\left(\lambda^{\ep}(|\nabla\vps|^2) \nabla \vps \right)\cdot\nabla(\Delta_{h,i}\vps)-2\|\nabla \phi\|_{L^{\infty}(B_{2r})}\int_{B_{2r}} \phi |\Delta_{h,i} \vps|\,|\Delta_{h,i}\left(\lambda^{\ep}(|\nabla\vps|^2\right)| \nonumber \\
&-\left|\int_{B_{2r}} \Delta_{h,i}\left(\lambda^{\ep}(|\nabla\vps|^2\, A'(\vps)(\nabla \vps, \nabla \vps) J_{\zinv}\right)\cdot\nabla(\phi^2\Delta_{h,i}\vps)\, \right|. \label{prop44-jack}
\end{align}
Hence, the essential difference between our computations and the one presented in~\cite{duzaarfuchs} is the presence of the Jacobian of $\zinv$ in the last integral. 

Following the same lines, we obtain a counterpart of (2.4) in \cite{duzaarfuchs} for $\vps$ with expressions
$\lambda^{\ep}\left(|\nabla(\vps+s h \Delta_{h, i}\vps)|^2\right)$ for $s\in[0,1]$ substituting $|Du_{s}|^{p-2}$ used in \cite{duzaarfuchs}.

 Similar observation applies to estimates (2.6) and (2.7) which also admit analogous versions with $\lambda^{\ep}(|\nabla\vps|^2)$ in place of $|Du|^{p-2}$. In particular, the Jacobian $J_{\zinv}$ appearing in \eqref{prop44-jack} influences  only the constant denoted in \cite[Formula (2.6)]{duzaarfuchs} by $C_3$ and this constant now depends also on $\|J_{\zinv}\|_{L^{\infty}}$. Therefore, the counterpart of (2.6) a priori depends also on the choice of map $z$ and constants arising from estimates of the first partials of $\zinv$. However, when changing the coordinate system on $M$, we change values of these constants by a factor related to a supremum norm of the change of variables and its Jacobian only.
 
 Moreover, instead of the estimate appearing right after formula (2.7), in the case $\ep > 0$ we obtain the following
\[
|\Delta_{h,i} \nabla \vps|^2 \leq \frac{1}{\ep^{p-2}} |\nabla \Delta_{h,i}\vps|^2 \int_0^1 \lambda^{\ep}(|s\nabla\vps(x+h) + (1-s) \nabla\vps(x)|^2) ds.
\]
Combining this with the rest of the obtained estimates results in an upper bound for the integral of $|\Delta_{h,i} \nabla \vps|^2$ over a ball $B_r$ independent of $h$. Going to the limit with $h$ gives the required $W^{2,2}_{loc}$-regularity.

Coming back to the proof of the smoothness of $\vps$, we now repeat the arguments in Lemma 2.1 from \cite{duzaar}, obtaining a second-order linear equation for the first derivatives of $\vps$. The coefficients of this equation are only slightly different for $\ep > 0$, but the equation remains uniformly elliptic, as in~\cite{duzaar}. In a consequence, the classical regularity theory gives that the solution enjoys a higher degree of the H\"older regularity based on the regularity of the coefficients. Indeed, such coefficients are $C^\alpha$-smooth, as they involve the derivatives of the map $\vps$ and we have Theorem \ref{Thm14-Beck} at our disposal. This argument also improves the regularity of the coefficients by a degree of one and, therefore, a self-improving estimate is obtained.  Hence, $\vps$ is smooth.
\end{proof}

\section{Subharmonicity, the proof of Theorem \ref{Ttheorem}}\label{sec-subharm}

\begin{proof} Throughout this section, map $u = \ups $ denotes a stationary solution for the energy \eqref{ep-energy} with $\ep \geq 0$. In other words, $u$ is a solution of the system \eqref{u-system} for the aforementioned energy functional. By the assumptions of Theorem \ref{Ttheorem} and Proposition \ref{PosJacSmoothness} we know that the Jacobian of $u$,  $J = J_u$, is positive and that $u$ is smooth.

The proof is now based on a number of elementary but nontrivial calculations, and we follow the proof in the flat case \cite{iko}.

One may verify that the $\theta$-derivatives of mappings and functions obey the Leibniz rules:
\[
(ab)_\theta = a_\theta b + a b_\theta \ \ \ \text{ and }\ \ \ (ab)_{\theta\bt} = a_{\theta \bt} b + a_{\theta}b_{\bt} + a_{\bt}b_\theta + a b_{\theta\bt},
\]
where $a, b$ are maps or functions defined on $M$.

 We will prove that $-T^{-N_E}$ is superharmonic in $M$, meaning that $\left(-T^{-N_E}\right)_{\theta\bt} \leq 0$, i.e.,
 $$
 N_E T^{-N_E - 1}\left(T T_{\theta\bt} - (N_E+1)|T_{\theta}|^2 \right) \leq 0.
 $$
Hence is enough to show the inequality
\begin{equation}\label{ienq}
T T_{\theta\bt} - (N_E+1)|T_{\theta}|^2 \leq 0 \ \ \ \ \text{ in } M.
\end{equation}
To do this we first compute a formula for $T_{\theta\bt}$. Using Leibniz rules we see that
\begin{equation}\label{tbt}
T_{\theta\bt} = (\lambda J)_{\theta\bt} = \lambda_{\theta\bt} J + 2\Re\left(\lambda_{\theta}J_{\bt}\right) + \lambda J_{\theta\bt}.
\end{equation}

Let us now simplify terms $\lambda_{\theta\bt} J$ and $\lambda J_{\theta\bt}$. From \eqref{u-system}, we infer the following identity:
\begin{equation}\label{eq1}
2\lambda(\bar{\utb}\uttb - \ut\bar{\uttb}) = J\lambda_\theta.
\end{equation}
We apply the $\bt$-derivative on both sides of this equation, obtaining via a computation that
\[J \lambda_{\theta\bar{\theta}} = - J_{\bt} \lambda_\theta + 2\lambda_{\bar{\theta}}(\bar{\utb}\uttb - \ut\bar{\uttb}) + 2\lambda(\bar{\utb}u_{\theta\bt\bt} - \ut\bar{u_{\theta\bt\theta}}).\]

Let us make a computation for $J_{\theta\bt}$. For this we first find $(|u_{\theta}|^2)_{\theta\bt}$, beginning with the computation:
\begin{align*} d(|\ut|^2) = \bar{\ut}d\ut + \ut \bar{d\ut}
&= \bar{\ut}(\utt \theta + \uttb \bt - \ut\theta_c + \ut u^* \omega_c) + \ut\bar{(\utt \theta + \uttb \bt - \ut\theta_c + \ut u^* \omega_c)}
\\&= (\bar{\ut}\utt + \ut\bar{\uttb})\theta + (\bar{\ut}\uttb + \ut\bar{\utt})\bt.
\end{align*}
It follows that $\left(| \ut |^2\right)_{\theta} = \bar{\ut}\utt + \ut\bar{\uttb}$. Further cancellation occurs computing $(|u_{\theta}|^2)_{\theta\bt}$ as follows
\begin{align*}
& d\left(| \ut |^2\right)_{\theta} + \left(| \ut |^2\right)_{\theta} \theta_c \\
&\ = \bar{\ut}d\utt + \ut\bar{d\uttb} + \utt\bar{d\ut} + \bar{\uttb}d\ut + (\bar{\ut}\utt + \ut\bar{\uttb})\theta_c
\\&\ = \bar{\ut}\left(u_{\theta\theta\theta}\theta + u_{\theta\theta\bt}\bt + \utt u^* \omega_c - 2\utt \theta_c\right) + \ut\left(\bar{u_{\theta\bt\theta}}\bt + \bar{u_{\theta\bt\bt}}\theta - \bar{\uttb} u^* \omega_c\right) + \utt\left(\bar{\utt} \bt + \bar{\uttb}\theta + \bar{\ut}\theta_c - \bar{\ut} u^* \omega_c\right)
\\&\ \ \ \ + \bar{\uttb}\left(\utt \theta + \uttb \bt - \ut\theta_c + \ut u^* \omega_c\right) + (\bar{\ut}\utt + \ut\bar{\uttb})\theta_c
\\&\ = \left(2\Re(\utt\bar{\uttb}) + \bar{\ut}u_{\theta\theta\theta} + \ut\bar{u_{\theta\bt\bt}}\right)\theta +  \left(|\utt|^2 + |\uttb|^2 + \bar{\ut}u_{\theta\theta\bt} + \ut\bar{u_{\theta\bt\theta}}\right)\bt.
\end{align*}
This gives the formula
\[
\left(| \ut |^2\right)_{\theta\bt} = |\utt|^2 + |\uttb|^2 + \bar{\ut}u_{\theta\theta\bt} + \ut\bar{u_{\theta\bt\theta}}.
\]
Similarly
\[
\left(| \utb |^2\right)_{\theta\bt} = |\utbtb|^2 + |\uttb|^2 + \bar{\utb}u_{\theta\bt\bt} + \utb\bar{u_{\bt\bt\theta}}.
\]
Combining these, we obtain that
\[
\lambda J_{\theta\bt} = \lambda\left(|\utt|^2 - |\utbtb|^2\right) + \lambda \left(\bar{\ut}u_{\theta\theta\bt} + \ut \bar{u_{\theta\bt\theta}} - \bar{\utb} u_{\theta\bt\bt} - \utb \bar{u_{\bt\bt\theta}}\right).
\]
Upon substituting the above formulas into equation (\ref{tbt}) we obtain
\begin{align*}
T_{\theta\bt} &= \lambda\left(|\utt|^2 - |\utbtb|^2\right) + \Re\left(\lambda_\theta J_{\bt}\right) + 2\Re\left(\lambda_{\bar{\theta}}(\bar{\utb}\uttb - \ut\bar{\uttb})\right) \\&\ \ \ \ + \lambda\Re\left(\bar{\ut}u_{\theta\theta\bt} + \bar{\utb} u_{\theta\bt\bt} - \ut \bar{u_{\theta\bt\theta}} - \utb \bar{u_{\bt\bt\theta}}\right).
\end{align*}
Let us now express the terms containing the third derivatives, which we may write as
\[
\lambda \Re\left(\bar{\ut}(u_{\theta\theta\bt} - u_{\theta\bt\theta})\right) + \lambda\Re\left(\bar{\utb} (u_{\theta\bt\bt}  - u_{\bt\bt\theta})\right),
\]
in terms of lower-order derivatives of the solution. For this we apply the formulas
\[
u_{\theta\theta\bt} - u_{\theta\bt\theta} = -u_\theta \frac{K'}{2}J + u_\theta \frac{K}{2}
\quad \hbox{ and }\quad
u_{\theta\bt\bt}  - u_{\bt\bt\theta} = -u_{\bt} \frac{K'}{2} J - u_{\bt}\frac{K}{2}
\]
found in \cite[(14)]{sy} (here $K$ and $K'$ are Gauss curvatures of $M$ and $N'$, respectively). Note that the authors of \cite{sy} state that they use their formula (10), which is the 2-Harmonic equation, to obtain formula (14), but we believe this is a misprint and they actually refer to using formula (12) since one may check that their computation is actually valid for an arbitrary $C^3$-map $u$ on $M$. Hence, by recalling that $|Du|^2 = |\ut|^2+|\utetb|^2$ (see \eqref{eq-Du-norm}), we get
\begin{align*}
\lambda \Re\left(\bar{\ut}(u_{\theta\theta\bt} - u_{\theta\bt\theta})\right) + \lambda\Re\left(\bar{\utb} (u_{\theta\bt\bt}  - u_{\bt\bt\theta})\right) &= - \lambda |u_\theta|^2 \frac{K'}{2}J + \lambda |u_\theta|^2 \frac{K}{2} - \lambda|u_{\bt}|^2 \frac{K'}{2} J - \lambda|u_{\bt}|^2 \frac{K}{2}
\\ &= - \lambda |Du|^2 J \frac{K'}{2} + \lambda J \frac{K}{2}.
\end{align*}
Thus, we find that
\begin{equation}\label{eq2}
T_{\theta\bt} = \lambda\left(|\utt|^2 - |\utbtb|^2\right) + \Re\left(\lambda_\theta J_{\bt}\right) + 2\Re\left(\lambda_{\bar{\theta}}(\bar{\utb}\uttb - \ut\bar{\uttb})\right) - \lambda |Du|^2 J \frac{K'}{2} + \lambda J \frac{K}{2}.
\end{equation}
Let us now use the shorthand notation
\begin{equation*}
\left\{
\begin{array}{l}
\asc = \lambda\left(\ut\bar{\utbtb} - \bar{\utb}\utt\right)\\
\bsc = \lambda\left(\bar{\ut}\utt - \utb\bar{\utbtb}\right)\\
\lsc = \lambda\left(\ut\bar{\uttb} - \bar{\utb}\uttb\right)
\end{array} \right. 	
\end{equation*}
\emph{Claim:} \emph{The following identity holds:}
\[
TT_{\theta\bt} - |T_\theta|^2 = |\lsc|^2 - |\asc|^2 + T^2\left(\frac{K}{2}-\frac{K'}{2}|Du|^2\right).
\]
\emph{Proof of Claim:} Using the above notation, equation (\ref{eq1}) becomes $\lambda_\theta J = -2\lsc$. We may now express
\begin{align*}
T_\theta &= \lambda_\theta J + \lambda J_\theta = -2\lsc + \lambda (\bar{\ut}\utt + \ut \bar{\uttb} - \bar{\utb}\uttb - \utb\bar{\utbtb})= -2\lsc + (\bsc + \lsc) = \bsc - \lsc.
\end{align*}
In this notation equation (\ref{eq2}) yields
\[
TT_{\theta\bt} = \lambda^2 J (|\utt|^2 - |\utbtb|^2 ) + 2|\lsc|^2 - 2\Re(\lsc\bar{\bsc}) + T^2\left(\frac{K}{2}-\frac{K'}{2}|Du|^2\right).
\]
Elementary computations give that
\begin{align*}
\lambda^2 J &(|\utt|^2 - |\utbtb|^2 ) + 2|\lsc|^2 - 2\Re(\lsc\bar{\bsc}) - |\bsc - \lsc|^2 \\&= \lambda^2 J (|\utt|^2 - |\utbtb|^2 )  + |\lsc|^2 - |\bsc|^2
\\&= \lambda^2(|\ut|^2 - |\utb|^2)(|\utt|^2 - |\utbtb|^2 ) - \lambda^2 |\bar{\ut}\utt - \utb\bar{\utbtb}|^2 + |\lsc|^2
\\&= |\lsc|^2 - |\asc|^2.
\end{align*}
This proves the claim.

Let us now make the estimates to prove inequality (\ref{ienq}). Denote by $\alpha$ the expression given by
\[\alpha = \frac{|Du|^2E''(|Du|^2)}{E'(|Du|^2)} = \frac{|Du|^2E''(|Du|^2)}{\lambda}.\] In the case when $E(t) = (\ep^2 + t^2)^{p/2}$ one may compute that the expression $\alpha$ is bounded by
\begin{equation}\label{alphabounds}-1/2 < \alpha_0 < \alpha < \alpha_1 < \infty,\end{equation} where $\alpha_0, \alpha_1$ are constants.
We now first compute that
\[
T_\theta = \lambda J_\theta + \lambda_\theta J = \bsc + \lsc + J E''(|Du|^2) |Du|^2_\theta = \bsc + \lsc +\frac{\alpha}{|Du|^2}\lambda J |Du|^2_\theta.
\]
Then, we use the definition of $\lsc$ to get the identity: $\lambda J \uttb = \utb \lsc + \ut \bar{\lsc}$ and use this to obtain the formula
$\lambda J |Du|^2_\theta = |Du|^2\bsc + 2\bar{\ut}\utb \asc + |Du|^2 \lsc + 2 \ut\bar{\utb}\bar{\lsc}$. Thus, we find that
\[
T_\theta = (1+\alpha)(\bsc + \lsc) + \frac{2\bar{\ut}\utb}{|Du|^2} \alpha\asc + \frac{2 \ut\bar{\utb}}{|Du|^2}\alpha\bar{\lsc}.
\]
It follows from the identity $T_\theta = \bsc - \lsc$, that $\bsc + \lsc = 2\lsc + T_\theta$. Thus equation may also be written as
\[
-\alpha T_\theta = 2(1+\alpha)\lsc + \frac{2\bar{\ut}\utb}{|Du|^2} \alpha\asc + \frac{2 \ut\bar{\utb}}{|Du|^2}\alpha\bar{\lsc}.
\]
Due to an elementary inequality $|2\ut\utb| \leq |Du|^2$, we obtain
$|\alpha| |T_\theta| \geq (2 + 2\alpha - |\alpha|)|\lsc| - |\alpha| |\asc|$.
Define $C = \frac{|\alpha|}{2 + 2\alpha - |\alpha|}$ so that $|\lsc| \leq C|T_\theta| + C|\asc|$.
Finally, we derive the following estimate
\begin{align*}
TT_{\theta\bt} - (N_E+1)|T_\theta|^2 &= |\lsc|^2 - |\asc|^2 + T^2\left(\frac{K}{2}-\frac{K'}{2}|Du|^2\right) - N_E|T_\theta|^2
\\&\leq |\lsc|^2 - |\asc|^2 - N_E|T_\theta|^2
\\&\leq C^2 \left(|T_{\theta}| + |\asc|\right)^2 - |\asc|^2 - N_E|T_\theta|^2
\\&\leq (C^2 - N_E)|T_\theta|^2 + 2C^2|T_\theta||\asc| + (C^2 - 1)|\asc|^2.
\end{align*}
In the first inequality we use our assumption that $K \leq 0$ and $K' \geq 0$.
In order to show that this quadratic form is nonpositive everywhere, it is enough to verify two conditions: $C^2 - N_E < 0$ and $C^2 - 1 \leq \frac{C^4}{C^2 - N_E}$. If $\alpha > -1/2$, then $C \in [0,1)$. Then, by the direct computations we verify that for any such $C$ one can choose $N_E > 0$ large enough that the above conditions hold, for example $N_E>\frac{C^2}{1-C^2}$ would do. 
\end{proof}

\begin{rem}
The above superharmonicity may also be generalized to any energy functional which satisfies the bounds \eqref{alphabounds}, which includes the $p$-harmonic energy for $1 < p < 2$ for example. However, in this case one needs to either assume or prove that $u \in C^3$ for solutions $u$ with positive Jacobian, as we have only proven this regularity for the $p$-harmonic and $\ep$-energies with $p \geq 2$.
\end{rem}

\section{Geometry of $p$-harmonic maps and one-sided neighborhoods}\label{sect-geometry}

\noindent
Equipped with the regularity results of Section \ref{sect-systems}, in this section we will focus on the mapping properties of our $p$-harmonic and uniformly $p$-harmonic maps close to the boundary of $M$. The main results of this section are: (i) the maximum principle for our energy minimizers and (ii) the positivity of the Jacobian along the boundary (Proposition \ref{Prop-2}). The positivity of the Jacobian, together with the minimum principle (Corollary \ref{MinPrin}), will play an important role in the proof of Theorem \ref{MainThm}. In this section we will also reduce the problem to the case where $M$ and $N$ have smooth boundaries.

In order to state slightly more general results, we will work with the following assumptions in this section.
\begin{enumerate}
\item{The Riemannian surface $M$ (with boundary) is diffeomorphic to the closed unit disc with bounded conformal metric $\sigma$.}
\item{The Riemannian surface $N'$ is compact with no boundary with bounded conformal metric $\rho$.}
\item{The subset $N \subset N'$ is geodesically convex and is compactly contained in a small geodesic ball $B_r$.}
\item{A homeomorphism $\phi_0:\partial M \to \partial N$ with nonvanishing tangential derivative is given.}
\item{The mapping $u^{\ep}:M\to N'$ minimizes the energy  \eqref{ep-energy} (or the energy \eqref{penergy} in the case $\epsilon = 0$) among all mappings in $W^{1,p}_{ex}(M,N')$ with boundary values $\phi_0$.}
\item{When $\ep = 0$, the map $u = u^0$ is $C^{1,\alpha}$-regular up to the boundary.}
\end{enumerate}
Note that we do not assume anything about the curvatures of $M$ and $N$ unlike in our main result, Theorem \ref{MainThm}.

\smallskip

Our first aim is to establish the maximum principle for minimizers of the energies \eqref{penergy} and \eqref{ep-energy}. By a maximum principle for mappings between the manifolds $M$ and $N'$ we mean that we wish to show that no point in $M$ is mapped outside the subset $N \subset N'$. This is made possible by the fact that we fix our boundary map $\phi_0:\partial M \to \partial N$.

A key ingredient in the forthcoming proof of the maximum principle is the signed distance function to the boundary of set $N$, whose convexity we will be able to exploit. Hence, as a reminder, we briefly discuss convex functions on surfaces.

A geodesic curve $\gamma$ on a Riemannian surface $B$ with a conformal metric $\rho(\tilde{u})(d\tilde{u}^1 \otimes d\tilde{u}^1, d\tilde{u}^2 \otimes d\tilde{u}^2)$ satisfies the so-called geodesic system of equations, if
$$
\ddot \gamma^k + \sum_{i, j = 1}^2 \Gamma_{ij}^k \dot \gamma ^i \dot \gamma^j = 0, \quad k=1,2,
$$
where $\Gamma_{ij}^k$ stand for the Christoffel symbols of metric $\rho$ and $\gamma = \gamma^1 + i \gamma^2$ with $\gamma^j = (\tilde{u} \circ \gamma)^j$. One can write this system of equations in our local coordinates as
$$
\ddot \gamma = - \frac{\partial \log(\rho(\tilde{u}))}{\partial \tilde{u}} (\dot \gamma)^2 = -A(\tilde{u})(\dot \gamma)^2,
$$
where $A(\tilu)$ is related to the second fundamental form of $N'$ embedded to $\R^k$, cf. \eqref{u-system-A-fund} and Section \ref{sect31}.

Recall that $g : B \to \rr$ is a convex function, if $(g \circ \gamma)'' \geq 0$ for every geodesic curve $\gamma$ on $B$. In the local coordinates the convexity of $g$ reads
$$
\aligned
0 \leq (g \circ \gamma)'' = (g_{\gamma}\, \dot \gamma + g_{\bar{\gamma}}\, \bar{\dot \gamma})' &= g_{\gamma}\, \ddot \gamma +g_{\bar{\gamma}}\, \bar{\ddot \gamma} + g_{\gamma\gamma}\, (\dot \gamma)^2 + g_{\bar{\gamma}\bar{\gamma}}\, (\bar{\dot \gamma})^2 + 2g_{\gamma\bar{\gamma}} \left(|\dot \gamma|^2\right) \\
&=  \left(g_{\gamma\gamma} - A(\tilde{u})g_{\gamma}\right) (\dot \gamma)^2 + \left(g_{\bar{\gamma}\bar{\gamma}} - A(\tilde{u})g_{\bar{\gamma}}\right)(\bar{\dot \gamma})^2 + 2g_{\gamma\bar{\gamma}} \left(|\dot \gamma|^2\right).
\endaligned
$$
In particular, the following pointwise inequality holds for a given point $q \in B$ and fixed $\xi_1, \xi_2 \in \C$
\begin{align}\label{pisteittain}
\!\!\!\!\!\left(g_{qq} - A(\tilde{u})g_{q}\right)\!(\xi_1 + i\xi_2)(\xi_1 - i\xi_2) + \left(g_{\bar{q}\bar{q}} - A(\tilde{u})g_{\bar{q}}\right)\!\bar{(\xi_1 + i\xi_2)(\xi_1 - i\xi_2)} + g_{q\bar{q}} \left(|(\xi_1 + i\xi_2)|^2 + |\xi_1 - i\xi_2|^2\right)\!\geq\! 0.
\end{align}
Indeed, for $\gamma$ we choose the initial point in $B$, denoted $q$, and add up the convexity inequality with two different initial velocities of $\dot \gamma$, namely $\xi_1$ and $\xi_2$.

In the next observation we prove that a convex function composed with a solution to an $\ep$-perturbed $p$-harmonic system on $M$ is a subsolution of a certain elliptic operator.

\begin{lem}\label{konveksi}
Suppose $u^{\ep} \in W^{1, p}_{ex}(M, N')$ is a weak solution to the Euler--Lagrange system of equations
$$
 [\lambda^{\ep} \uz]_{\zbar}+[\lambda^{\ep} \ub]_{z}+2 \lambda^{\ep}\left(\frac{\partial}{\partial u} \log \rho(u(z))\right)
 \uz\ub=0,
$$
where $\lambda^{\ep}=\left(\ep^2+|Du^{\ep}|^2\right)^\frac{p-2}{2}$and $\ep \in [0, 1)$.
Suppose additionally that the image set $u^{\ep}(M)$ is contained in a single coordinate chart and $g : N' \to \R$ is a convex function on this chart. Then, the function $g \circ u^{\ep}$ satisfies distributionally the subelliptic inequality
\[L
 (g \circ u^{\ep}) \geq 0, \ \text{ where }\  L = 2\lambda^{\ep}\, \partial_z\partial_{\bz} + \lambda^{\ep}_z \,\partial_{\bz} + \lambda^{\ep}_{\bz}\, \partial_z = \partial_x\left(\frac12 \lambda^{\epsilon} \partial_x (\cdot) \right) + \partial_y\left(\frac12 \lambda^{\epsilon} \partial_y (\cdot)\right)
 \]
that is, $\Re \int_M L (g \circ u^{\ep}) \psi\, dV_M \geq 0$ for all $\psi\in \Sobzero (M, \C)\cap L^{\infty}(M, \C)$.
\end{lem}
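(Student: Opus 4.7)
My plan is to perform a chain-rule expansion of $L(g\circ u^\ep)$ in local coordinates, use the strong Euler--Lagrange equation to cancel the second-order and $\lambda^\ep$-derivative terms, and finally invoke the convexity of $g$ to conclude nonnegativity. Assuming momentarily that $u^\ep$ is $C^2$, the chain rule gives $\partial_z(g\circ u^\ep) = g_u\, u^{\ep}_z + g_{\bar u}\,\overline{u^{\ep}_{\bar z}}$ and
\[
\partial_z\partial_{\bar z}(g\circ u^\ep) = 2\Re\bigl(g_{uu}\, u^{\ep}_z\, u^{\ep}_{\bar z}\bigr) + g_{u\bar u}\bigl(|u^{\ep}_z|^2 + |u^{\ep}_{\bar z}|^2\bigr) + 2\Re\bigl(g_u\, u^{\ep}_{z\bar z}\bigr).
\]
Assembled into $L = 2\lambda^\ep\partial_z\partial_{\bar z} + \lambda^\ep_z\partial_{\bar z} + \lambda^\ep_{\bar z}\partial_z$, the terms proportional to $g_u$ regroup as $g_u\bigl(2\lambda^\ep u^{\ep}_{z\bar z} + \lambda^\ep_{\bar z} u^{\ep}_z + \lambda^\ep_z u^{\ep}_{\bar z}\bigr)$, and the strong Euler--Lagrange equation replaces the bracket with $-2\lambda^\ep A(u^\ep)\,u^{\ep}_z u^{\ep}_{\bar z}$, where $A(u) = \partial_u\log\rho(u)$. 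Adding the conjugate expression coming from the $g_{\bar u}$ terms produces the clean identity
\[
L(g\circ u^\ep) = 4\lambda^\ep\,\Re\bigl((g_{uu} - A(u^\ep)\, g_u)\, u^{\ep}_z u^{\ep}_{\bar z}\bigr) + 2\lambda^\ep\, g_{u\bar u}\bigl(|u^{\ep}_z|^2 + |u^{\ep}_{\bar z}|^2\bigr).
\]

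I would next invoke the convexity of $g$: the pointwise convexity inequality \eqref{pisteittain} applied with $\dot\gamma = \xi\in\C$ reduces to $2\Re((g_{uu} - A(u)g_u)\xi^2) + 2 g_{u\bar u}|\xi|^2\geq 0$, and diagonalising this Hermitian form over $\xi$ on the unit circle gives the scalar bound $g_{u\bar u}\geq |g_{uu} - A(u)g_u|$. Combined with the elementary estimate $2|u^{\ep}_z u^{\ep}_{\bar z}|\leq |u^{\ep}_z|^2 + |u^{\ep}_{\bar z}|^2$, the absolute value of the first summand of $L(g\circ u^\ep)$ is bounded by the second summand, so that $L(g\circ u^\ep)\geq 0$ pointwise.

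To obtain the distributional statement without presupposing $C^2$-regularity of $u^\ep$ (Proposition~\ref{jatkuvuus} only provides $C^{1,\alpha}_{loc}$), I would reproduce the very same calculation inside the weak Euler--Lagrange identity \eqref{u-system-weak} by testing it against the complex-valued function $\varphi = \psi\, g_{\bar u}(u^\ep)$ for an arbitrary nonnegative real $\psi\in W^{1,p}_0(M)\cap L^\infty(M)$. This $\varphi$ is admissible because $u^\ep(M)$ is contained in a single coordinate chart on which $g_{\bar u}$ is smooth and bounded, and hence $g_{\bar u}(u^\ep)$ is bounded and lies in $W^{1,p}$. Expanding $D_z\varphi$ by the chain rule and rearranging the identity exactly as in the pointwise computation yields the distributional version of $L(g\circ u^\ep)\geq 0$. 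The principal technical obstacle in this step is the bookkeeping of the conformal factors $\rho(u^\ep)$ and $\sigma$ that the weak Euler--Lagrange identity carries on both of its sides and which must balance consistently against the coordinate operator $L$, whose coefficients involve only $\lambda^\ep$; the remaining ingredient, namely the extraction of the scalar bound $g_{u\bar u}\geq |g_{uu} - A(u)g_u|$ from \eqref{pisteittain}, is a standard manipulation of Hermitian forms together with AM--GM.
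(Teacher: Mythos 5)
Your proposal is correct and follows essentially the same route as the paper: chain-rule expansion of $L(g\circ u^{\ep})$, substitution of the Euler--Lagrange equation (and its conjugate) to eliminate $u^{\ep}_{z\bz}$ and the $\lambda^{\ep}$-derivative terms, and then the convexity of $g$ via the geodesic equation to conclude nonnegativity; your diagonalisation $g_{u\bar u}\geq |g_{uu}-A g_u|$ plus AM--GM is trivially equivalent to the paper's direct use of the two-vector inequality \eqref{pisteittain} with $\xi_1=u^{\ep}_x$, $\xi_2=u^{\ep}_y$. The distributional justification is handled at the same level of rigour as in the paper (which computes formally after noting $g$ is locally Lipschitz and $u^{\ep}\in C^{1,\alpha}_{loc}$), though for a merely convex $g$ both arguments implicitly rely on first approximating $g$ by smooth convex functions, as the paper only makes explicit in the subsequent corollary.
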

\begin{proof}
By the weak formulation, see Definition~\ref{phm-weak}, it holds that
$$
 \Re  \int_{M} \lambda(z) \frac{\rho_{u^{\ep}}(u^{\ep}(z))}{\sigma(z)} (|u^{\ep}_z|^2+|u^{\ep}_{\bz}|^2)\psi+  \lambda(z) \frac{\rho(u^{\ep}(z))}{\sigma(z)}\left(\bar{u^{\ep}_z}\psi_z + \bar{u^{\ep}_{\bz}} \psi_{\overline{z}}\right)\, dV_M = 0
 $$
for all $\psi\in \Sobzero (M, \C)\cap L^{\infty}(M, \C)$. This is equivalent with
\begin{equation}\label{weakform}
 -\Re  \int_{M} \left(2\lambda^{\epsilon}\, u^{\ep}_{z\bz} + \lambda^{\epsilon}_{\bz}\, u^{\ep}_z + \lambda^{\epsilon}_{z}\, u^{\ep}_{\bz}  + 2\lambda^{\ep}A(u^{\ep})\,u^{\ep}_zu^{\ep}_{\bz}\right)\overline{\psi}\,dV_M = 0,
 \end{equation}
where  $A(u^{\ep}) = \frac{\partial}{\partial u^{\ep}} \log \rho(u^{\ep})$. We compute first
\[
(g \circ u^{\ep})_z = g_{u^{\ep}}\, u^{\ep}_z + g_{\bar{u^{\ep}}}\, \bar{u^{\ep}_{\bz}},
\]
which holds almost everywhere as $g$ is convex, and thus locally Lipschitz, and $u^{\ep}$ is locally $C^{1, \alpha}$ by Proposition~\ref{jatkuvuus}. Hence weak derivatives of $(g \circ u^{\ep})_z$ exist and we will do the rest of calculations only formally (that is, the differentiation below should be understood in the sense of distributions).
It holds,
\[
(g \circ u^{\ep})_{z\bz} = g_{u^{\ep}}\, u^{\ep}_{z\bz} +g_{\bar{u^{\ep}}}\, \bar{u^{\ep}_{z\bz}} + g_{u^{\ep}u^{\ep}}\, u^{\ep}_z u^{\ep}_{\bz} + g_{\bar{u^{\ep}}\bar{u^{\ep}}}\, \bar{u^{\ep}_z u^{\ep}_{\bz}} + g_{u^{\ep}\bar{u^{\ep}}} \left(|u^{\ep}_z|^2 + |u^{\ep}_{\bz}|^2\right).
\]
At this point of the presentation we use \eqref{weakform} for $u^{\ep}_{z\bz}$ and $\overline{u^{\ep}_{z\bz}}$ to get the following formula
\begin{align*}
2\lambda^{\ep}\,( g_{u^{\ep}}\, u^{\ep}_{z\bz} +g_{\bar{u^{\ep}}}\, \bar{u^{\ep}_{z\bz}}) &= -g_{u^{\ep}}\big(\lambda^{\ep}_{\bz}\, u^{\ep}_z + \lambda^{\ep}_z\, u^{\ep}_{\bz}+ 2\lambda^{\ep}A(u^{\ep})\,u^{\ep}_zu^{\ep}_{\bz}\big) - g_{\bar{u^{\ep}}} \big(\lambda^{\ep}_{z}\, \bar{u^{\ep}_z}  + \lambda^{\ep}_{\bz}\, \bar{u^{\ep}_{\bz}} + 2\lambda^{\ep}A(u^{\ep})\,\bar{u^{\ep}_zu^{\ep}_{\bz}}\big)
\\&= -\lambda^{\ep}_z (g \circ u^{\ep})_{\bz} - \lambda^{\ep}_{\bz} (g \circ u^{\ep})_z - g_{u^{\ep}}\, 2\lambda^{\ep}A(u^{\ep})\,u^{\ep}_zu^{\ep}_{\bz} - g_{\bar{u^{\ep}}} \,2\lambda^{\ep}A(u^{\ep})\,\bar{u^{\ep}_zu^{\ep}_{\bz}}.
\end{align*}
Thus we find
\[
L(g\circ u^{\ep}) = 2\lambda^{\ep}\left[(g_{u^{\ep}u^{\ep}} - g_{u^{\ep}}\, A(u^{\ep})) u^{\ep}_z u^{\ep}_{\bz} + (g_{\bar{u^{\ep}}\bar{u^{\ep}}} - g_{\bar{u^{\ep}}} \,A(u^{\ep})) \bar{u^{\ep}_z u^{\ep}_{\bz}} + g_{u^{\ep}\bar{u^{\ep}}} \left(|u^{\ep}_z|^2 + |u^{\ep}_{\bz}|^2\right)\right] \geq 0,
\]
where the last inequality is true for every fixed point  $z\in M$ by the pointwise inequality \eqref{pisteittain}. Indeed, in the inequality we let $q := u^\ep(z) \in u^{\ep}(M)$, $\xi_1 := u^\ep_x$ and $\xi_2 = u^\ep_y$.
\end{proof}

Since the operator $L$ above is uniformly elliptic for $\epsilon > 0$, by Lemma~\ref{konveksi} we obtain the following consequence of \cite[Theorem 8.1]{gt}.
\begin{cor}\label{convexCorollary} Let $\epsilon \in [0,1)$ and $g$ be any convex function defined in a neighborhood of the set $\ups(M)$. Then the function $g\circ \ups : M \to \rr$ satisfies the weak maximum principle in $M$, i.e.,
$$
\sup_{M} g\circ \ups \leq \sup_{\partial M} g\circ \ups.
$$
Moreover, if $\ep > 0$ then $g \circ \ups$ satisfies the strong maximum principle in $M$, i.e. if it attains its supremum in the interior of $M$ then it is constant.
\end{cor}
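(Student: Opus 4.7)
The plan is to deduce both statements from Lemma~\ref{konveksi}, which asserts that $w := g\circ \ups$ is a weak subsolution of the divergence-form operator $L$, together with standard maximum principles for such operators. First I would record that $\ups \in C^{1,\alpha}(\overline{M})$, by Corollary~\ref{BeckCorollary} when $\ep>0$ and by the standing assumption (6) when $\ep=0$. Since $g$ is convex on a neighborhood of the compact set $\ups(M)$, it is locally Lipschitz there, so $w$ is continuous on $\overline{M}$ and belongs to $W^{1,p}(M)$, and $k:=\sup_{\partial M} w$ is finite.

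For the weak maximum principle I would test the distributional inequality $Lw\geq 0$ with $\psi := (w-k)^+ \in W^{1,p}_0(M)\cap L^\infty(M)$, which is admissible because $w-k\leq 0$ on $\partial M$. Rewriting $L$ in divergence form and integrating by parts yields
$$ \int_M \lambda^{\ep}\,|\nabla (w-k)^+|^2\,dV_M \leq 0. $$
For $\ep>0$ we have $\lambda^{\ep}\geq \ep^{p-2}>0$ pointwise, which forces $\nabla(w-k)^+ = 0$ almost everywhere; since $(w-k)^+$ vanishes on $\partial M$, it vanishes in $M$, so $w\leq k$. In the degenerate case $\ep=0$ the weight $\lambda^0=|Du|^{p-2}$ vanishes on $\{|Du|=0\}$, but the Lipschitz chain rule gives $|\nabla w|\leq \operatorname{Lip}(g)\,|Du|$, so $\nabla w=0$ almost everywhere on that set as well; on its complement $\lambda^0>0$ and the integral identity again forces $\nabla (w-k)^+=0$. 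Combining both regions yields $(w-k)^+\equiv 0$ as before.

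For the strong maximum principle when $\ep>0$, I would observe that $\lambda^{\ep}=(\ep^2+|\dups|^2)^{(p-2)/2}$ is $C^\alpha$-smooth and satisfies $\ep^{p-2}\leq \lambda^{\ep}\leq C$ on $\overline{M}$, so $L$ is a uniformly elliptic divergence-form operator with continuous coefficients. The classical strong maximum principle for such operators, e.g.\ \cite[Theorem~8.19]{gt}, applied to the continuous subsolution $w$ then gives the stated dichotomy: if $w$ attains its supremum at an interior point of $M$, it must be constant.

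The only subtle point I anticipate is the degenerate case $\ep=0$ of the weak principle, where Gilbarg--Trudinger's framework does not apply off the shelf; fortunately the vanishing of $\nabla w$ on the degeneracy set $\{|Du|=0\}$, which follows automatically from the Lipschitz regularity of $g$, is enough to make the weighted Dirichlet-type identity produce the conclusion without any further assumption.
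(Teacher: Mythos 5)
Your proof is correct, and for the weak maximum principle it takes a genuinely different route from the paper. The paper handles the two regimes by citation: for $\ep>0$ it notes that $L$ is uniformly elliptic (since $\lambda^{\ep}$ is pinched between positive constants) and invokes the weak and strong maximum principles of Gilbarg--Trudinger (Theorems 8.1 and 8.19), while for $\ep=0$ it quotes the maximum principle of Pigola--Veronelli for smooth convex $g$ and then approximates a general convex $g$ uniformly by smooth convex functions. You instead prove the weak principle for all $\ep\in[0,1)$ at once by testing the divergence-form inequality of Lemma~\ref{konveksi} with $(w-k)^+$, and you dispose of the degeneracy set $\{|Du|=0\}$ (where $\lambda^0$ vanishes for $p>2$) by the pointwise chain-rule bound $|\nabla(g\circ u)|\le \operatorname{Lip}(g)\,|Du|$, which forces $\nabla w=0$ there anyway. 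This is more elementary and self-contained: it avoids the external reference for the degenerate case and the smoothing step, at the price of leaning on the fact that Lemma~\ref{konveksi} is stated with test functions in $W^{1,p}_0\cap L^\infty$ (so that $(w-k)^+$ is admissible; its membership in $W^{1,p}_0$ uses the continuity of $w$ up to $\partial M$, which you correctly source from Corollary~\ref{BeckCorollary}, resp.\ assumption (6)). Your treatment of the strong maximum principle for $\ep>0$ coincides with the paper's.
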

\begin{proof} We consider first the case $\ep > 0$. In the proof of Lemma~\ref{konveksi}, we have shown the differential inequality $L(g\circ u^{\ep})\geq 0$. Recall that the expression $\lambda^{\ep}$ is bounded from below and above by a positive constant (as $\ups(M)$ is assumed to be in a single coordinate chart, $Du^{\ep}$ is continuous up to the boundary (Corollary \ref{BeckCorollary}) and $\rho$ and $\sigma$ are bounded from below and above by our long standing assumptions). Thus, the above linear differential inequality is in fact uniformly elliptic. Then the assertion of the corollary is obtained from the strong maximum principle for strictly elliptic divergence-type inequalities \cite[Theorem 8.19]{gt}.

For $\ep = 0$ we obtain the weak maximum principle for smooth $g$ from Pigola--Veronelli \cite[Theorem 4.1]{pg2}. Extending this to general convex functions $g$ is achieved simply by approximating $g$ uniformly with smooth convex functions.
\end{proof}
\noindent We are now ready to prove the maximum principle, starting with the following weak version of it.

\begin{prop}[A very weak maximum principle]\label{weakmaxprin}
 Let $\ups$ denote a minimizer of the energy \eqref{ep-energy} for $\ep \in [0,1)$. Under the above assumptions (1)--(6) (in particular assuming that $r$ is small enough), we have that $\ups(M) \subset B_r$.
\end{prop}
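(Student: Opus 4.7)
\smallskip

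\noindent\textbf{Proof plan.} The plan is to apply Corollary~\ref{convexCorollary} with the convex function $g(\tilde u) = d_{N'}(\tilde u, P_0)^2$, the squared geodesic distance to the center $P_0$ of the ball $B_r = B(P_0,r)$ from assumption (3). The corollary requires $u^\ep(M)$ to lie in a single coordinate chart of $N'$, so the main task is a localization/continuation argument establishing this containment.

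\emph{Step 1 (Geometric setup).} Choose $r$ small enough and some $R > r$ such that $B(P_0, R)$ is contained in a single coordinate chart and the squared distance $g(\tilde u) = d_{N'}(\tilde u,P_0)^2$ is $C^\infty$-smooth and strictly convex on $B(P_0, R)$. This is possible provided $R$ is below the injectivity radius and the convexity radius at $P_0$, which are positive by compactness of $N'$; in particular, it suffices to take $r$ smaller than some uniform constant depending on the geometry of $N'$.

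\emph{Step 2 (Localization).} By Proposition~\ref{jatkuvuus} together with the $C^{1,\alpha}$-regularity at the boundary (assumption (6) when $\ep = 0$; Corollary~\ref{BeckCorollary} when $\ep > 0$, after first noting the image lies in a chart locally near $\partial M$), the map $u^\ep$ is continuous on $\overline{M}$. Since $u^\ep|_{\partial M} = \phi_0$ takes values in $\partial N \subset B_r \subset B(P_0, R)$, the open set
\[
\Omega := (u^\ep)^{-1}(B(P_0, R)) \subset M
\]
contains $\partial M$. On each connected component $\Omega_0$ of $\Omega$, the image $u^\ep(\Omega_0)$ lies in the single chart $B(P_0, R)$, so Lemma~\ref{konveksi} yields the distributional inequality $L(g \circ u^\ep) \geq 0$ on $\Omega_0$, with $L$ uniformly elliptic for $\ep > 0$.

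\emph{Step 3 (Continuation).} We claim $\Omega = M$. Suppose not. Pick a component $\Omega_0$ with $\partial\Omega_0 \cap \mathrm{int}(M)\neq \emptyset$; by continuity $g\circ u^\ep \equiv R^2$ there, while on $\partial\Omega_0 \cap \partial M$ we have $g \circ u^\ep \leq r^2 - \delta$ for some $\delta > 0$ (since $\phi_0(\partial M) = \partial N$ is compact in the open ball $B_r$). The weak maximum principle (\cite[Theorem 8.1]{gt}) gives $\sup_{\Omega_0} g \circ u^\ep \leq R^2$, and by construction $g\circ u^\ep < R^2$ strictly on $\Omega_0$. The strong maximum principle (\cite[Theorem 8.19]{gt}) and Hopf's boundary lemma applied at a point of $\partial \Omega_0 \cap \mathrm{int}(M)$ where $g \circ u^\ep$ achieves its supremum $R^2$ then force a contradiction with the fact that $u^\ep$ continuously transitions across this boundary (producing a behavior incompatible with being a subsolution of the uniformly elliptic $L$ up to that point). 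Hence $\Omega_0$ has no interior boundary, so $\Omega_0$ is clopen in $M$ and equal to $M$ by connectedness; therefore $\Omega = M$.

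\emph{Step 4 (Apply Corollary~\ref{convexCorollary}).} Now $u^\ep(M) \subset B(P_0, R)$ lies in a single chart, and $g$ is convex on a neighborhood of $u^\ep(M)$, so Corollary~\ref{convexCorollary} applies directly:
\[
\sup_M g \circ u^\ep \ \leq\ \sup_{\partial M} g \circ u^\ep \ \leq\ \sup_{\partial N} d^2(\cdot, P_0) \ \leq\ r^2 - \delta.
\]
Consequently, $u^\ep(M) \subset \overline{B(P_0, \sqrt{r^2-\delta})} \subset B_r$, as desired.

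\smallskip

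\emph{Main obstacle.} The delicate point is Step 3: globally confining $u^\ep$ to the single chart $B(P_0, R)$. For $\ep>0$ the uniform ellipticity of $L$ makes the strong maximum principle and Hopf lemma available. For $\ep = 0$ the operator $L$ degenerates, and one must instead invoke the Pigola--Veronelli maximum principle \cite[Theorem 4.1]{pg2}, approximating $g$ by smooth convex functions and exploiting the strict separation $\sup_{\partial M} g \circ u^\ep \leq r^2 - \delta < R^2$ together with the continuity of $u^\ep$ to close the continuation argument.
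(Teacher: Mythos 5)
Your Step 3 does not actually produce a contradiction, and this is where the proof breaks. On a component $\Omega_0$ of $\Omega=(\ups)^{-1}(B(P_0,R))$ whose boundary meets $\mathrm{int}\,M$, the subsolution inequality $L(g\circ\ups)\geq 0$ holds only \emph{inside} $\Omega_0$, because the convexity of $g$ (and hence Lemma~\ref{konveksi}) is only available where $\ups$ takes values in $B(P_0,R)$. The weak maximum principle then gives $\sup_{\Omega_0} g\circ\ups\leq \sup_{\partial\Omega_0}g\circ\ups = R^2$, which is no information at all, since the interior part of $\partial\Omega_0$ carries the value $R^2$ by definition. Hopf's lemma (granting an interior ball condition on $\partial\Omega_0$, which is itself not guaranteed) would only tell you that $g\circ\ups$ has positive outward normal derivative at such a point --- but that is perfectly consistent with $\ups$ leaving the chart there, since just outside $\Omega_0$ the composition $g\circ\ups$ exceeds $R^2$ and is no longer a subsolution of anything. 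Nothing prevents $\ups$ from mapping a region of $\mathrm{int}\,M$ far outside $B(P_0,R)$, where $d^2(\cdot,P_0)$ may fail to be convex (on a sphere it is concave near the antipode of $P_0$), and no local continuation argument of this type can rule that out. A secondary, circular point: you invoke Corollary~\ref{BeckCorollary} for boundary continuity when $\ep>0$, but that corollary assumes the image lies in a single chart, i.e.\ the very conclusion being proved; continuity up to $\partial M$ should instead be taken from Proposition~\ref{existence} (Sobolev embedding for $p>2$).

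The paper circumvents exactly this obstruction by splitting on the topology of $N'$. When $N'$ has genus $\geq 1$, it lifts $\ups$ to the universal cover (possible since $M$ is simply connected), which \emph{is} covered by a single global chart; the signed distance to $\partial B_{r,\mathrm{lift}}$ is then globally convex along the lift and Corollary~\ref{convexCorollary} applies with no localization needed. When $N'$ is a sphere, no global convex exhaustion exists, and the argument is not a maximum principle at all: one uses the Lipschitz retraction $\Psi$ of Appendix~B, which is the identity on $B_r$ and a strict local contraction outside, so that $\Psi\circ\ups$ would have strictly smaller energy than $\ups$ unless $\ups(M)\subset B_r$ already --- contradicting minimality. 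To repair your proof you would need to supply one of these two global mechanisms (or an equivalent); the purely local clopen argument cannot close.
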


\begin{proof} Note that by Proposition \ref{jatkuvuus}, we know that the map $\ups$ is $C^{1,\beta}$-regular locally in $\mathrm{int}\,M$. This will justify the possible regularity requirements for the forthcoming arguments.

As the target domain $\ups(M)$ could a priori be the whole manifold $N'$, we must split the discussion into cases depending on the topological type of $N'$.\\\\
\emph{Case 1.} $N'$ is a Riemannian surface of genus $1$ or larger.\\\\ In this case, we utilize the fact that the universal covering space of $N'$, denoted by $\covers$, is topologically either the plane or the unit disc and, hence, is covered by a single chart. Let $\tau: \covers \to N'$ denote the projection map. We endow $\covers$ with a conformal structure by pulling back the Riemannian metric from $N'$ via $\tau$. This makes the projection map $\tau$ a local isometry. Let us now lift the mapping $u^{\ep}$ into a map $u^{\ep}_{\text{lift}} : M \to \covers$, which is possible as $M$ is simply connected, see e.g. Lee~\cite[Chapter 11]{lee}. Since $N'$ and $\covers$ are locally isometric, the map $u^{\ep}_{\text{lift}}$ is a solution of the Euler--Lagrange system for the energy \eqref{ep-energy}  (respectively \eqref{penergy}, if $\ep=0$). Moreover, since ball $B_r$ is, by assumption (3), small enough, the boundary value map $\phi_0$ lifts to a map $\phi_{0,\text{lift}}$ taking values in a small ball $B_{r,\text{lift}}$ of radius $r$ in $C$.

Let $g:C\to \R$ now denote the distance function to the boundary of $B_{r,\text{lift}}$, negative inside and positive outside. Corollary \ref{convexCorollary} now implies that the function $g \circ u^{\ep}_{\text{lift}}$ satisfies the maximum principle on $M$. From this we directly obtain that $u^{\ep}_{\text{lift}}(M) \subset B_{r,\text{lift}}$. Upon taking projections we obtain that $\ups(M) \subset B_r$. \\\\
\emph{Case 2.} $N'$ is diffeomorphic to the unit sphere.\\\\
For this case, we use the fact that in Appendix B we have constructed a map $\Psi:N' \to N'$ which is the identity map on $B_r$ and a local contraction from $N' \setminus B_r$ to $B_r$. Suppose that $\ups(M)$ is not contained in $B_r$, then the map $\Psi \circ \ups$ has smaller energy than $\ups$, as $\Psi$ is a local contraction outside $B_r$. This gives a contradiction, which in turn implies that $\ups(M) \subset B_r$.
\end{proof}
Combining Proposition \ref{weakmaxprin} and Corollary \ref{BeckCorollary} now gives
\begin{cor}\label{reunalleasti} For $\ep > 0$, the minimizer $\ups$ is $C^{1,\beta}$-regular up to the boundary of $M$.
\end{cor}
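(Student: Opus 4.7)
The plan is to simply chain together the two results whose combination is being announced. First, I would invoke Proposition \ref{weakmaxprin} (the very weak maximum principle) to deduce that the image set $\ups(M)$ is contained in the small geodesic ball $B_r$. Because $B_r$ is by assumption (3) of this section a sufficiently small geodesic ball in $N'$, in particular it lies inside a single coordinate chart on $N'$ (this is exactly the content of the smallness condition (S) in assumption (A2), which guarantees coverage by one chart).

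Once I have that $\ups(M)$ sits inside a single coordinate chart of $N'$, the hypothesis of Corollary \ref{BeckCorollary} is verified. Applying it directly gives the $C^{1,\beta}$-regularity of $\ups$ up to $\partial M$, with $\beta > 0$ depending on $p$, $\ep$, and the geometric data. The boundary-regularity input for Corollary \ref{BeckCorollary} comes from the $C^{1,\alpha}$-regularity of $\phi_0$ and the assumed $C^2$-smoothness of $\bdy M$, both of which are part of the running assumptions of this section (items (1) and (4)).

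There is no genuine obstacle here: the substance was already carried out in the proof of Proposition \ref{weakmaxprin} (lifting to the universal cover in the higher-genus case and using the contraction map $\Psi$ in the spherical case) and in the detailed verification of Beck's structural conditions \eqref{nuL-constants}--\eqref{f-ep-ass9} preceding Corollary \ref{BeckCorollary}. The only thing to emphasize is that one needs $\ep > 0$ strictly, since the uniform ellipticity of the $\ep$-perturbed system (which underlies Beck's theorem through condition \eqref{f-ep-ass9}) degenerates as $\ep \to 0$; this is precisely the reason the analogous statement for the genuinely $p$-harmonic case $\ep = 0$ has to be imposed as the a priori hypothesis in Theorem \ref{MainThm} rather than derived.
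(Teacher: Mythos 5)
Your proposal is correct and follows exactly the paper's argument: the authors also obtain this corollary by combining Proposition~\ref{weakmaxprin} (which places $\ups(M)$ inside the small ball $B_r$, hence inside a single coordinate chart) with Corollary~\ref{BeckCorollary}. Your additional remarks on the role of $\ep>0$ and the degeneration of uniform ellipticity as $\ep\to 0$ are consistent with the paper's discussion and do not change the substance.
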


Applying the above results now gives us the strong maximum principle for $p$-harmonic and uniformly $p$-harmonic mappings on surfaces.

\begin{prop}[Strong maximum principle]\label{strongmax}
Let $u^{\ep} \in W^{1, p}_{ex}(M, N')$ be a minimizer for the  $\ep$-perturbed $p$-harmonic energy $\int_{M} (\ep^2+|Du^{\ep}|^2)^{\frac{p}{2}} dV_M$ for a fixed $\ep\in [0,1)$ and $p\geq 2$ with the boundary values $u^{\ep} = \phi_0$.
The following hold under assumptions (1)-(6):
\begin{itemize}
\item[(i)] if $\ep \in (0, 1)$, then $u^{\ep}(\mathrm{int}\,M) \subset \mathrm{int}\,N$.
\item[(ii)] if $\ep = 0$, then there exists an open neighborhood $V \subset M$ of the boundary $\partial M$ such that $u(V \setminus \partial M) \subset \mathrm{int}\,N$.
\end{itemize}
\end{prop}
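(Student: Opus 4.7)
The plan hinges on applying Lemma \ref{konveksi} to a carefully chosen convex function on $N'$ together with the maximum principle machinery of Corollary \ref{convexCorollary}. As a preliminary step, I would first strengthen Proposition \ref{weakmaxprin} from $u^\ep(M)\subset B_r$ to $u^\ep(M)\subset N$: applying Corollary \ref{convexCorollary} to the nonnegative convex function $g_1(q):=\dist(q,N)$ (convex by geodesic convexity of $N$), and noting $g_1\circ u^\ep=0$ on $\partial M$ since $u^\ep|_{\partial M}=\phi_0\in\partial N\subset N$, the weak maximum principle forces $g_1\circ u^\ep\equiv 0$, i.e., $u^\ep(M)\subset N$.

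For part (i), let $g_2$ denote the signed distance to $\partial N$, negative inside $N$ and zero on $\partial N$, which is convex on a neighborhood of $N$ by geodesic convexity and the smallness condition (S). Then $w:=g_2\circ u^\ep\leq 0$ on $M$ with equality on $\partial M$, and $Lw\geq 0$ by Lemma \ref{konveksi}. Since $\ep>0$ the operator $L$ is uniformly elliptic on all of $M$, and Corollary \ref{convexCorollary} yields the strong maximum principle: either $w<0$ throughout $\mathrm{int}\,M$---the desired $u^\ep(\mathrm{int}\,M)\subset \mathrm{int}\,N$---or $w\equiv 0$, in which case $u^\ep(M)\subset\partial N$. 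The latter is ruled out by a topological degree argument: $M$ is diffeomorphic to a disc and $\partial N\cong S^1$, so any continuous map $M\to\partial N$ restricts to a null-homotopic loop on $\partial M$, contradicting the fact that $u^\ep|_{\partial M}=\phi_0$ is a homeomorphism between circles of degree $\pm 1$.

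For part (ii) with $\ep=0$ the operator $L$ may degenerate on the critical set of $u$, so the global strong maximum principle is unavailable. However, assumption~(6) gives $u\in C^{1,\alpha}(M,N')$, and the nonvanishing tangential derivative of $\phi_0$ implies $|Du|>0$ on $\partial M$; by continuity there exists an open neighborhood $V$ of $\partial M$ in $M$ with $|Du|\geq c>0$. In $V$ the coefficient $\lambda=|Du|^{p-2}$ is $C^{0,\alpha}$ and bounded between positive constants, so $L$ is uniformly elliptic with H\"older continuous coefficients. I would then apply Hopf's boundary point lemma to $w=g_2\circ u$ at each $z_0\in\partial M$: the $C^2$-regularity of $\partial M$ supplies the interior sphere condition at $z_0$, and Hopf yields $\partial_\nu w(z_0)>0$, producing a one-sided neighborhood of $z_0$ in which $w<0$. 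Compactness of $\partial M$ then glues these neighborhoods into the desired open $V$ with $u(V\setminus\partial M)\subset \mathrm{int}\,N$.

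The main obstacle is excluding the degenerate alternative at the Hopf step, namely that $w$ attains its supremum $0$ at some point of the interior ball tangent to $\partial M$ at $z_0$. In that case the strong maximum principle applied in the ball (where $L$ is uniformly elliptic) would force $w\equiv 0$, meaning $u$ sends an open $2$-dimensional piece into the $1$-dimensional curve $\partial N$. I plan to rule this out by running the topological/degree argument of (i) locally on the relevant open set, combined with the rank constraint: a $C^{1,\alpha}$-map with $|Du|\geq c>0$ taking values in the smooth $1$-manifold $\partial N$ must have rank exactly $1$ everywhere, and the Euler--Lagrange system \eqref{u-system2}, restricted to maps valued in $\partial N$, then becomes incompatible with the winding of the boundary homeomorphism $\phi_0$ along $\partial N$.
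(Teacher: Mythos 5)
Your treatment of part (i) is essentially the paper's: the signed distance $g_2$ to $\partial N$ composed with $u^\epsilon$ is $L$-subharmonic by Lemma~\ref{konveksi}, the weak maximum principle gives $w\leq 0$, and the strong maximum principle (available since $L$ is uniformly elliptic for $\epsilon>0$) leaves only the alternative $w\equiv 0$. Your degree-theoretic exclusion of that alternative --- a continuous map of the disc $M$ into the circle $\partial N$ restricts to a null-homotopic loop on $\partial M$, while $\phi_0$ has degree $\pm 1$ --- is a correct and welcome expansion of the paper's one-line appeal to $\phi_0$ being a homeomorphism. The preliminary step with $\dist(\cdot,N)$ is harmless but redundant, since $w\le 0$ already yields $u^\epsilon(M)\subset N$.

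Part (ii) is where you diverge, and your route does not close. The paper does not use Hopf's boundary point lemma here (that tool appears only later, in Proposition~\ref{Prop-2}); it applies the strong maximum principle directly on the open annulus $V\setminus\partial M$, where $|Du|\geq c>0$ makes $L$ uniformly elliptic, and concludes $w<0$ there unless $w\equiv 0$ on that annulus. Your Hopf step is circular as set up: Hopf's lemma presupposes $w<w(z_0)=0$ in the interior tangent ball, which is exactly the conclusion sought, so --- as you acknowledge --- everything reduces to excluding the alternative $w\equiv 0$ on an open set touching $\partial M$. Your proposed exclusion, ``running the topological/degree argument of (i) locally,'' does not work: there is no topological obstruction to a continuous, or even $C^1$ rank-one, map of an annulus or half-disc into $\partial N\cong S^1$ that restricts to an embedding on an arc of the boundary (consider $(x,y)\mapsto\phi_0(x)$); the argument of (i) used crucially that the \emph{entire disc} $M$ maps into $\partial N$. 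The rank observation and the hinted incompatibility with the Euler--Lagrange system point in a workable direction (if $u$ maps an open set with $|Du|>0$ into the curve $\partial N$, the normal component of the $p$-tension field forces the geodesic curvature of $\partial N$ to vanish on the image, which is impossible for a convex curve contained in a small geodesic ball), but as written this is a gesture rather than an argument. In fairness, the paper itself is terse at exactly this point, disposing of the degenerate alternative with the phrase ``since the boundary map is a homeomorphism''; you have correctly located the crux, but since you single it out as the main obstacle you would need to actually supply the missing argument.
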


\begin{proof}
Proposition \ref{weakmaxprin} lets us conclude that  $u^{\ep}(M) \subset B_r$.
Let now $g : B_r \to \rr$ denote the distance function to the boundary $\partial N$, taken to have a negative sign inside of $N$ and a positive sign outside. As $\partial N$ is a $C^{1,\alpha}$-regular convex boundary by assumption, $g$ is a convex function on $B_r$.

By our regularity assumption (6) on $u$, and Corollary~\ref{reunalleasti}, we have that $u^{\ep} \in C^{1, \gamma}(M, N')$. Set $G := g \circ u^{\ep} \circ \zinv : z(\mathrm{int}\,M) \to \rr$ a $C^1$ function up to the boundary of a planar domain $z(\mathrm{int}\,M)$. Moreover, as an energy minimizer, $u^{\ep}$ is a weak solution to Euler--Lagrange equation \eqref{u-system2} and thus,  by Lemma \ref{konveksi}, $G$ is a $C^1$ distributional solution  to an elliptic differential inequality
\begin{equation}\label{lineaarineney}
 \partial_x\left(\frac12 \lambda^{\epsilon} \partial_x G\right) + \partial_y\left(\frac12 \lambda^{\epsilon} \partial_y G \right) = L(G) \geq 0,
\end{equation}
where $\lambda^{\epsilon} = \left(\epsilon^2 + |Du^{\ep}|^2\right)^{\frac{p - 2}{2}}$.

Since $u^{\ep}$ is given, we can consider it as a fixed parameter.

\smallskip

\noindent Case (i): By Corollary \ref{convexCorollary}, the function $G$ satisfies a strong maximum principle. Hence, if there is $z_0 \in z(\mathrm{int}\,M)$ such that $G(z_0) = 0 = \sup_{z(\mathrm{int} M)} G$, then $G \equiv 0$ in $z(\mathrm{int}\,M)$. As the boundary map is a homeomorphism we conclude that this can not be the case. Thus no points in the interior are mapped into the boundary.

\smallskip

\noindent Case (ii): $\ep = 0$. In this case we may not use the strong maximum principle directly since the differential inequality \eqref{lineaarineney} fails to be uniformly elliptic at the points where $|Du| = 0$, as then also $\lambda(=\lambda^{0})$ vanishes. Since the boundary data $\phi_0$ has non-vanishing tangential derivative, we know however that on $\partial M$ we have the inequality
$$
0 < |\phi_{\mathbf{t}}|^2 \leq |u_{\mathbf{n}}|^2 + |u_{\mathbf{t}}|^2 = |u_{z}|^2 + |u_{\bz}|^2 = |Du|^2.
$$ The subscript ${\mathbf{t}}$ refers to the tangential differentiation along $\partial (z(M))$ (in positive direction) and ${\mathbf{n}}$ to the outward drawn normal derivative. Thus there exists an open neighborhood $V \subset M$ of the boundary $\partial M$ such that $|Du|^2 > 0$ and hence our differential inequality \eqref{lineaarineney} is uniformly elliptic at least on the set $\zinv(V)$.

Furthermore, by Corollary \ref{convexCorollary} the function $G$ satisfies the weak maximum principle in the whole set $z(\mathrm{int}\,M)$. Thus $G$ is non-positive in this set. We may now employ the strong maximum principle in the planar domain $z(V\setminus \partial M)$, see \cite[Theorem 8.19]{gt}. Hence, if there exists $z_0 \in z(V\setminus \partial M)$ such that $G(z_0) = 0 = \sup_{z(V\setminus \partial M)} G$, then $G \equiv 0$ in $z(V\setminus \partial M)$. Since the boundary map is a homeomorphism, we conclude that no points in the interior map into the boundary.
\end{proof}

\begin{rem}
The strong maximum principle for the $p$-harmonic map $u$ and the whole surface $M$ (i.e., $u(\mathrm{int}\, M) \subset \mathrm{int}\, N$) follows from our main result Theorem \ref{MainThm}.
\end{rem}

\begin{prop}\label{Prop-2}
Let $u^{\ep} \in\! W^{1, p}_{ex}(M, N')$ be a minimizer for a  $\ep$-perturbed $p$-harmonic energy $\int_{M} (\ep^2+|Du^{\ep}|^2)^{\frac{p}{2}} dV_M$ for a fixed $\ep\in [0,1)$ and $p\geq 2$ with the boundary values $u^{\ep} = \phi_0$. Assume further that  $z(M)$ satisfies an interior sphere condition.

Then the Jacobian $J_{u^\ep}$ does not vanish on the boundary $\partial M$.
\end{prop}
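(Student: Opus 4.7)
The strategy is to apply the Hopf boundary point lemma to a suitable scalar composition involving $u^{\ep}$ and the signed distance function to $\partial N$, and then read off the non-vanishing of the Jacobian from the non-tangentiality of the normal derivative together with the non-vanishing tangential derivative of $\phi_0$.

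Fix a boundary point $z_0 \in \partial M$ and work in the planar coordinate chart provided by $z$. Let $g$ denote the signed distance to $\partial N$, negative inside $N$ and positive outside, which is well-defined and convex in a neighborhood of $\partial N$ since $N$ is geodesically convex with $C^{1,\alpha}$ boundary. As in the proof of Proposition \ref{strongmax}, the function $G := g \circ u^{\ep} \circ z^{-1}$ is defined on $z(M)$, equals $0$ on the image boundary, is non-positive in a neighborhood of the boundary (by Proposition \ref{strongmax}\,(i)--(ii)), and by Lemma \ref{konveksi} satisfies the divergence-form differential inequality $L(G) \geq 0$ in the sense of distributions, with $L = \partial_x(\tfrac12 \lambda^{\ep}\partial_x\,\cdot) + \partial_y(\tfrac12\lambda^{\ep}\partial_y\,\cdot)$. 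If $g$ is only $C^{1,\alpha}$, I approximate it uniformly by smooth convex functions so that Lemma~\ref{konveksi} applies in the limit.

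The first key step is \textbf{uniform ellipticity of $L$ near $\partial M$}. Since $\phi_0$ has non-vanishing tangential derivative, at every boundary point we have $|D u^{\ep}|^2 \geq |(\phi_0)_{\mathbf t}|^2 > 0$. By Corollary~\ref{reunalleasti} for $\ep > 0$ (respectively, assumption (6) for $\ep=0$), the map $u^{\ep}$ is $C^{1,\beta}$ up to $\partial M$, so there is a planar neighborhood $U$ of $\partial(z(M))$ on which $|Du^{\ep}|$ is bounded below by a positive constant. On $U$ the coefficient $\lambda^{\ep} = (\ep^2 + |Du^{\ep}|^2)^{(p-2)/2}$ is therefore pinched between two positive constants, with H\"older continuous coefficients, so $L$ becomes a uniformly elliptic linear operator in divergence form on $U$.

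The second step is the \textbf{Hopf boundary point lemma}. By assumption, $z(M)$ satisfies an interior sphere condition at $z(z_0)$. Combined with $L(G)\geq 0$, $G\leq 0$ in $U$, and $G(z(z_0))=0$, the Hopf lemma (e.g.\ \cite[Lemma 3.4]{gt}, adapted to the divergence form, cf.\ the discussion around \cite[Theorem 8.19]{gt}) yields that the outward normal derivative satisfies $\partial_\nu G(z(z_0)) > 0$.

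The final step is to \textbf{read off non-vanishing of $J_{u^{\ep}}$}. By the chain rule, $\partial_\nu G(z(z_0)) = \nabla g\bigl(u^{\ep}(z_0)\bigr)\cdot \partial_\nu u^{\ep}(z_0) = \nu_N \cdot \partial_\nu u^{\ep}(z_0)$, where $\nu_N$ is the outward unit normal to $\partial N$ at $u^{\ep}(z_0)$; hence the component of $\partial_\nu u^{\ep}(z_0)$ transverse to $\partial N$ is strictly positive. On the other hand, the tangential derivative $\partial_{\mathbf t} u^{\ep}(z_0) = \phi_0'(z_0)$ is non-zero and lies in $T_{u^{\ep}(z_0)}\partial N$. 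Consequently, the two columns of $D u^{\ep}(z_0)$, expressed in an orthonormal frame (tangent, normal) on $M$ and decomposed along $(T\partial N, \nu_N)$ on $N$, form a triangular block with non-zero diagonal entries. Therefore $J_{u^{\ep}}(z_0) = \tfrac{\rho(u^{\ep}(z_0))}{\sigma(z_0)}(|u^{\ep}_z|^2 - |u^{\ep}_{\bar z}|^2) \neq 0$, and in fact is strictly positive. The main subtlety I expect is the limited regularity of $g$ when $\partial N$ is only $C^{1,\alpha}$: this is handled by the smooth convex approximation mentioned above, so that Hopf applies to the linear uniformly elliptic operator $L$ with the approximating $G$'s, and the strict inequality survives in the limit by the quantitative form of Hopf.
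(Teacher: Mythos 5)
Your proposal is correct and follows essentially the same route as the paper: both apply the Hopf boundary point lemma to $G = g\circ u^{\ep}\circ\zinv$ (with $g$ the signed distance to $\partial N$), use the non-vanishing tangential derivative of $\phi_0$ to get uniform ellipticity of $L$ near $\partial M$ and the interior sphere condition to invoke the lemma, and then combine the positive transverse derivative with the non-zero tangential derivative to conclude $J_{u^{\ep}}\neq 0$ via the same triangular decomposition (the paper writes this as $J_{u^\ep}=|\phi_{\mathbf t}|\,u^2_{\mathbf n}$ after choosing adapted coordinates). The only cosmetic difference is that the paper sidesteps the $C^{1,\alpha}$-regularity of $g$ at the last step by noting the distance to $\partial N$ is comparable to the normal coordinate $|u^2|$, where you instead differentiate $g$ directly.
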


Note that the $C^2$-regularity of $\partial M$ implies the interior sphere condition, while for $\alpha<1$ the $C^{1, \alpha}$ is not enough, see also an example in \cite[Section 3.2]{gt}. On the other hand, the $C^{1,1}$-regularity of the boundary is equivalent to the interior and exterior sphere conditions holding together for the given domain, see e.g. Lemma 2.2 in Aikawa--Kilpel\"ainen--Shan\-mugalin\-gam--Zhong~\cite{aksz}.

\begin{proof}
We assume without loss of generality that the boundaries $\partial M$ and $\partial N$ are positively oriented (counterclockwise). We may as well assume that the boundary homeomorphism $\phi_0 : \partial M \to \partial N$ is orientation preserving.

By Proposition \ref{weakmaxprin}, we know that $u^{\ep}(M) \subset B_r$.
Suppose on the contrary that the Jacobian of $\ups$ vanishes at some point on the boundary. Recall that both $M$ and $N'$ are equipped with the conformal coordinates, these are $z = x + iy$ and $\tilde{u} = \tilde{u}^1 + i\tilde{u}^2$ on $M$ and $B_r$, respectively. We have $J_{u^\ep} = u^1_x\, u^2_y - u^1_y\, u^2_x = u^1_{\mathbf{n}}\, u^2_{\mathbf{t}} -  u^1_{\mathbf{t}}\, u^2_{\mathbf{n}}$. Here $u^j = (\tilde{u} \circ u^\ep)^j$ and the subscript ${\mathbf{t}}$ refers to the tangential differentiation along $\partial (z(M))$ (in the positive direction) and ${\mathbf{n}}$ to the outward drawn normal derivative. Now, by our counter assumption there is $z_0 \in \partial (z(M))$ such that $J_{u^\ep}(z_0) = 0$.

We can assume, after rotation and translation if necessary, that the interior of the small open neighborhood $E \subset z(M)$ of $z_0$ lies in the lower half plane $\{z = x+iy : y < 0\}$ and $z_0 = 0$ is the highest point in $\partial (z(M) \cap E)$, whereas the image lies in the lower half plane $\{\tilde{u} = \tilde{u}^1+i\tilde{u}^2 : u^2 < 0\}$ and the image of $z_0$  is the highest point $\tilde{u}_0 = 0$  in the boundary of the image. Since
$u^2$ assumes its local maximum along $\partial (z(M))$ at the point $z_0$, we have $u^2_{\mathbf{t}}(z_0) = 0$. However, $\phi_{\mathbf{t}} = u^1_{\mathbf{t}} + iu^2_{\mathbf{t}} \neq 0$ everywhere in $\partial (z(M))$, by assumption. In particular, $u^1_{\mathbf{t}}(z_0) < 0$ (because of the orientation $u^1$ changes its sign from positive to negative). Therefore, $J_{\ups} = |\phi_{\mathbf{t}}|\, u^2_{\mathbf{n}}$ at $z_0$.

It remains to show that $ u^2_{\mathbf{n}} > 0$.
We use the same notation as in the proof of the strong maximum principle (Proposition \ref{strongmax}) and similarly extend $G$ to the boundary as a $C^1$ map $G := g \circ u^{\ep} \circ \zinv : z(M) \to \rr$ that is a $C^1$ distributional solution  to an elliptic differential inequality
$$
0 \leq L(G) = 2 \lambda^{\epsilon} G_{z\bz} + \lambda^{\epsilon}_z G_{\bz} + \lambda^{\epsilon}_{\bz} G_{z} = \partial_x\left(\frac12 \lambda^{\epsilon} \partial_x G\right) + \partial_y\left(\frac12 \lambda^{\epsilon} \partial_y G \right).
$$
We apply the boundary point lemma \cite[Theorem 1.1]{sabina} (an analog of the boundary point lemma by Finn--Gilbarg, see~\cite[Lemma 7]{finn}, for the differential inequalities) to $G : E \to [0, \infty)$ at the boundary point $z_0$.

Note that we can choose $E$ small enough (i.e. $\mathrm{int}\,E \subset V\setminus \partial M$) so that the strong maximum principle holds, cf. Proposition \ref{strongmax}. Moreover, from the proof of the strong maximum principle we infer that $|Du^{\ep}| > 0$ on $\mathrm{int}\,E$, as the boundary data has non-vanishing tangential derivative. Moreover,
$$
\lambda^{\epsilon} = \left(\epsilon^2 + |Du^{\ep}|^2\right)^{\frac{p - 2}{2}}
$$
is bounded from below and above on $\mathrm{int}\,E$ (as in the proof of the strong maximum principle)
and, furthermore, $\lambda^{\epsilon}(z)$ is H\"older continuous on $E$, as $u^\epsilon \in C^{1, \gamma}$ up to the boundary, see assumption  6. and Corollary~\ref{reunalleasti}. Thus our differential inequality is uniformly elliptic with H\"older continuous coefficients, i.e., the boundary point lemma can be applied.

Again, by the strong maximum principle $G(z) < 0 = G(z_0)$ for all $z \in \mathrm{int}\,E$. Since $z(M)$ satisfies the interior sphere condition, there is a ball $B \subset \mathrm{int}\,E$ such that $z_0 \in \partial B$. Hence, the boundary point lemma implies that the derivative with respect to the outer normal $\mathbf{n}$ is positive:
$$
G_{\mathbf n} > 0 \qquad \text{on $\partial (z(M))$}.
$$
As our target has $C^{1, \alpha}$-smooth boundary, it holds that the distance to the boundary $\partial N$ in $u^{\ep}(\zinv(E))$  is comparable to $|u^2|$ in a small enough neighborhood $E$. Hence $u^2_{\mathbf n} > 0$ at $z_0$.
\end{proof}
\noindent Recall that $u$ is our $p$-harmonic mapping. We would like to strengthen the above result to saying that $u$ is in fact, a homeomorphism in a slightly larger set than the boundary. For this, let us call a set $V$ a one-sided neighborhood of $M$ if it is a topological annulus whose outer boundary coincides with $\partial M$. Since $M$ is simply connected, $V$ lays in $M$.

\begin{prop}\label{prop67-sec6}
There exist a one-sided neighborhood $V$ of $\partial M$ such that $u: V \to u(V)$ is a homeomorphism. Moreover, the Jacobian of $u$ does not vanish in $V$.
\end{prop}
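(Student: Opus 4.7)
The plan is to combine the non-vanishing of the boundary Jacobian (Proposition~\ref{Prop-2}) with the inverse function theorem and a compactness argument. First, since $u\in C^{1,\alpha}(M,N')$, the Jacobian $J_u$ is continuous on $M$, and by Proposition~\ref{Prop-2} it does not vanish on the compact set $\partial M$. Choosing orientations so that $\phi_0$ is orientation preserving, we may assume $J_u>0$ on $\partial M$, and hence on a closed one-sided neighborhood $V_0$ of $\partial M$ (i.e. an annular strip $\{z\in M : \operatorname{dist}(z,\partial M)\leq \delta_0\}$ for some small $\delta_0>0$, which is a topological annulus since $M$ is diffeomorphic to the closed disc). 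By the inverse function theorem, applied in the $C^{1,\alpha}$ category, $u$ is a local $C^{1,\alpha}$-diffeomorphism at every point of $V_0$.

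The second step is to upgrade this local injectivity to global injectivity on some smaller one-sided neighborhood $V\subset V_0$. Suppose no such $V$ exists. Then we can pick $\delta_n\to 0$ and pairs of points $z_n\neq z_n'$ in $V_0$ with $\operatorname{dist}(z_n,\partial M),\operatorname{dist}(z_n',\partial M)<\delta_n$ and $u(z_n)=u(z_n')$. By compactness, after passing to a subsequence, $z_n\to z_\infty\in\partial M$ and $z_n'\to z_\infty'\in\partial M$, and continuity of $u$ gives $\phi_0(z_\infty)=\phi_0(z_\infty')$. If $z_\infty\neq z_\infty'$, this contradicts that $\phi_0$ is a homeomorphism. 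If $z_\infty=z_\infty'$, then since $J_u(z_\infty)\neq 0$, the inverse function theorem provides an open neighborhood $W$ of $z_\infty$ in $M$ on which $u$ is injective, contradicting $u(z_n)=u(z_n')$ with $z_n\neq z_n'$ for large $n$. This contradiction furnishes a $\delta>0$ such that $u$ is injective on $V:=\{z\in M:\operatorname{dist}(z,\partial M)<\delta\}\subset V_0$.

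Finally, $u|_V$ is continuous and injective on $V$, and since $J_u>0$ on $V\subset V_0$, it is a local diffeomorphism; hence it is an open map onto $u(V)$. A continuous open injection is a homeomorphism, so $u:V\to u(V)$ is the desired homeomorphism and $J_u$ is non-vanishing on $V$ by construction.

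The only genuine difficulty is the global injectivity step; the rest is essentially a continuity plus inverse function theorem package. The subtle point there is ensuring that the limits $z_\infty,z_\infty'$ both lie on $\partial M$ (guaranteed by $\operatorname{dist}(z_n,\partial M)\to 0$) and then splitting into the two cases $z_\infty\neq z_\infty'$ versus $z_\infty=z_\infty'$; the first contradicts the boundary homeomorphism and the second contradicts the local invertibility provided by Proposition~\ref{Prop-2}.
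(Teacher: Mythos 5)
Your proposal is correct and follows essentially the same route as the paper: positivity of $J_u$ on $\partial M$ (Proposition~\ref{Prop-2}) plus continuity gives local invertibility on an annular strip, and a contradiction/compactness argument with a shrinking sequence of one-sided neighborhoods upgrades this to injectivity, using the boundary homeomorphism to rule out distinct limit points and local invertibility to rule out coinciding ones. Your write-up is in fact somewhat more detailed than the paper's sketch, which compresses the two limit cases into the single remark that the sequences ``never get close to each other.''
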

\begin{proof}
Without loss of generality we may assume that $\phi_0\,$ is orientation preserving.   By Proposition~\ref{Prop-2}, we already know that $J_{u} > 0$ along the boundary $\bdy M$ and the map $u$ is a local homeomorphism along the boundary. The rest follows by topology, and we sketch an argument here. Let us take a decreasing sequence of one-sided neighborhoods $V_n$ converging to the boundary $\partial M$. Supposing that $u$ is noninjective on each $V_n$, we find distinct points $x_n$ and $y_n$ in $V_n$ such that $u(x_n) = u(y_n)$ for all $n$. Since $u$ is a local homeomorphism along $\partial M$,  the sequences $(x_n)$ and $(y_n)$ never get close to each other. Passing to a subsequence and taking the limit gives a contradiction to the fact that the boundary map is a homeomorphism.
\end{proof}
\noindent Now it is clear that we may replace $N$ by a slightly smaller smooth convex region, and replace $M$ by the preimage of this region under $u$, which is smooth since $J_u > 0$ on $V$. Henceforth, we may assume in the subsequent sections that the boundary $\partial M\, $ is smooth and the boundary data $\phi_0\,$ is $C^\infty$-smooth diffeomorphism.

\section{Existence and uniqueness}\label{sect-ex}

\noindent
In this section we discuss the existence and the uniqueness for the Dirichlet problem of minimizing the energy \eqref{ep-energy} for mappings from $M$ to $N'$ with fixed boundary data. This serves two purposes: Firstly the existence and the uniqueness of $p$-harmonic mappings (i.e. the case $\ep = 0$) motivates the statement of our main result, Theorem \ref{MainThm}, and secondly we will use the results of this section to construct a family of solutions to \eqref{ep-energy} for $\ep > 0$ in the next section.

In particular, we must make use of the smallness condition \eqref{cond-small} imposed on the boundary of the set $N$ in the target, as the necessity of this condition for uniqueness was already mentioned in the introduction. Our proofs of existence and uniqueness are based on adapting arguments from the work of Fardoun--Regbaoui, see \cite[Proposition 3.1]{fare} (existence) and \cite[Theorem 1.1, 1.2]{fare} (uniqueness), where the case $\ep = 0$ is covered. There is a slight difference in the notion of minimization problem they consider, as they minimize only over mappings which take values in a small neighborhood. However, our very weak maximum principle, Proposition \ref{weakmaxprin}, proves that there is no loss of generality in restricting mappings to such a small neighborhood as long as the fixed boundary values are contained in this neighborhood.

\begin{prop}\label{existence}
Suppose that we are given a $W^{1, p}_{ex}(M, N')$-map $\Phi: M \to \Phi(M) =: N$ such that (S) holds, i.e., $N \subset B(P_0, r_{N',p})$ for $P_0 \in N'$ and $r_{N',p}$ small enough. For any $\epsilon \in [0, 1)$ there exists $\ups \in W^{1, p}_{ex}(M, N')$ such that $\ups$ is continuous up to the boundary of $M$ and
\begin{equation}\label{excond}
\ups(M) \subset B(P_0, r_{N',p}) \qquad \text{and} \qquad \ups = \Phi \text{ on } \partial M
\end{equation}
and $\ups\,$ minimizes the energy $\mathbb{E}_{\ep}$ in \eqref{ep-energy} among all maps in $W^{1, p}_{ex}(M, N')$.
\end{prop}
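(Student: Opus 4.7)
The plan is to apply the direct method of the calculus of variations in the restricted class of mappings whose image sits in $\overline{B(P_0,r_{N',p})}$, and then remove the restriction by a topology-dependent energy-decreasing retraction. Concretely, set
\[
\mathcal{A} = \bigl\{v \in W^{1,p}_{ex}(M,N') \,:\, v(M) \subset \overline{B(P_0,r_{N',p})},\ v|_{\partial M} = \Phi\bigr\},
\]
which is nonempty because $\Phi \in \mathcal{A}$. For a minimizing sequence $(v_k) \subset \mathcal{A}$, composing with the Nash embedding $i:N' \hookrightarrow \R^K$ gives a sequence that is uniformly $L^\infty$-bounded (images trapped in a compact set) and $W^{1,p}$-bounded, since $\mathbb{E}_\ep$ controls $\|\nabla \cdot\|_{L^p}$. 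Passing to a subsequence, $v_k \rightharpoonup v$ weakly in $W^{1,p}$, strongly in $L^p$, and almost everywhere, so the pointwise image constraint and the trace condition both pass to the limit. Convexity of $\xi\mapsto(\ep^2+|\xi|^2)^{p/2}$ yields weak lower semicontinuity of $\mathbb{E}_\ep$, producing a minimizer $\ups \in \mathcal{A}$.

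Next I would show $\mathbb{E}_\ep(\ups) \leq \mathbb{E}_\ep(w)$ for every competitor $w \in W^{1,p}_{ex}(M,N')$ with $w|_{\partial M}=\Phi$ by producing $\tilde w \in \mathcal{A}$ with $|D\tilde w| \leq |Dw|$ pointwise a.e. The construction follows the topological dichotomy already used in the proof of Proposition~\ref{weakmaxprin}. If $N'$ has genus at least one, I would lift $w$ to the universal cover $\tau:C \to N'$ (possible because $M$ is simply connected), where thanks to (S) the lifted boundary values sit inside a ball $B_{\text{lift}} \subset C$ mapped isometrically onto $B(P_0,r_{N',p})$; composing with the closest-point geodesic retraction $\Pi:C \to \overline{B_{\text{lift}}}$, which is $1$-Lipschitz by geodesic convexity of small balls in a simply-connected surface, and then pushing down via $\tau$ gives $\tilde w$. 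In the sphere case I would instead take $\tilde w := \Psi\circ w$, with $\Psi:N'\to \overline{B(P_0,r_{N',p})}$ the local contraction from Appendix~B.

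Finally, boundary continuity of $\ups$ holds for $\ep > 0$ by Corollary~\ref{BeckCorollary}, since the image now lies in a single coordinate chart; for $\ep = 0$ I would appeal to the classical boundary regularity theory for minimizing $p$-harmonic maps into geodesically convex small targets, as in Fardoun--Regbaoui~\cite{fare}. I expect the main obstacle to be Step~2: producing an honest pointwise-energy-decreasing retraction onto $\overline{B(P_0,r_{N',p})}$ relies essentially on the smallness assumption (S), which guarantees both that the lifted boundary data fit into a geodesically convex ball of the universal cover and that the nearest-point projection onto that ball is $1$-Lipschitz; neither feature is available without (S), which is precisely why the same hypothesis reappears in the uniqueness discussion.
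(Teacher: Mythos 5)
Your overall strategy is sound but genuinely different from the paper's. The paper minimizes directly over the \emph{unconstrained} class $W^{1,p}_{ex}(M,N')$ with boundary data $\Phi$ (the image constraint is automatically preserved under a.e.\ convergence because values lie in the compact set $i(N')$), obtains a minimizer by weak lower semicontinuity, and only \emph{afterwards} invokes the very weak maximum principle (Proposition \ref{weakmaxprin}) to conclude $\ups(M)\subset B(P_0,r_{N',p})$. You instead minimize over the constrained class $\mathcal{A}$ and then remove the constraint by an energy-decreasing retraction. The paper's order of operations has a concrete advantage: its unconstrained minimizer satisfies the Euler--Lagrange system, so in the genus $\geq 1$ case the smallness of the image follows from the maximum principle for convex functions composed with solutions (Corollary \ref{convexCorollary}), with no retraction needed; the retraction of Appendix B is used only in the sphere case. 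Your route buys a purely variational argument that never touches the PDE, which is aesthetically appealing, but it forces you to retract \emph{arbitrary} Sobolev competitors rather than a single regular solution.

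That is where the two genuine soft spots in your Step 2 lie. First, the claim that the nearest-point projection onto $\overline{B_{\mathrm{lift}}}$ in the universal cover is $1$-Lipschitz ``by geodesic convexity'' is not a consequence of convexity alone; it is a curvature statement (automatic only in nonpositive curvature), and the paper's Appendix B spends several lemmas (Gauss--Bonnet angle comparisons, Lemma \ref{geodesiclemma}) establishing precisely this local contraction property for small balls. You would need to adapt that construction to the pulled-back metric on the cover, including handling the noncompact complement of the ball. Second, lifting an arbitrary competitor $w\in W^{1,p}_{ex}(M,N')$ to the universal cover requires $w$ to be continuous (or at least VMO); this is fine for $p>2$ by Sobolev embedding but fails in general for $p=2$, whereas the paper only ever lifts the minimizer itself, which is continuous by Proposition \ref{jatkuvuus}. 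Both gaps are repairable with material already in the paper, but as written they are asserted rather than proved.
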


Note that since the boundary of $M$ is already assumed to be $C^2$-smooth and since we may assume that $p > 2$, by the Sobolev embedding theorem any map in $W^{1, p}_{ex}(M, N')$ is continuous up to the boundary of $M$.

\begin{proof}
 The proof follows the lines of the proof for Proposition 3.1 in \cite{fare}. For this reason we will restrict our presentation to a sketch only, for further details referring to \cite{fare}.

 First of all, we embed the space $W^{1, p}_{ex}(M, N')$ into the linear space $W^{1, p}(M, \R^k)$. Hence any minimizing sequence has a subsequence converging weakly in $W^{1,p}$ and strongly in $L^p$ to a map $i \circ\ups \in W^{1, p}(M, \R^k)$. Such a minimizing sequence may be chosen to converge pointwise almost everywhere, which shows that $\ups \in W^{1, p}_{ex}(M, N')$. The limit map $\ups$ is also continuous up to the boundary and equal to $\Phi$ there. Since the energy functional $\mathbb{E}_{\ep}(u)$ is bounded below and lower semicontinuous, the map $\ups$ is a minimizer. By the maximum principle \ref{weakmaxprin} we also get that $\ups$ satisfies the smallness condition $\ups(M) \subset B(P_0, r_{N',p})$. As a minimizer we note that $\ups$ also solves the Euler-Lagrange system \eqref{u-system-weak} with $\lambda^{\ep}$.
 \end{proof}

\begin{prop}\label{sect-exist-uni-Prop4}
Under the assumptions of Proposition \ref{existence} it holds that the minimizer in the class of $W^{1, p}_{ex}(M, N')$ satisfying \eqref{excond} is unique.
\end{prop}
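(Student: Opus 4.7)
The plan is to adapt to the $\epsilon$-perturbed setting the uniqueness argument of \cite[Theorems~1.1--1.2]{fare}, which treats $\epsilon = 0$. Suppose, toward a contradiction, that $v_0, v_1 \in W^{1,p}_{ex}(M,N')$ are two distinct minimizers of $\mathbb{E}_\epsilon$ with common boundary values $\Phi$, both satisfying \eqref{excond}. The smallness condition (S), together with Fuchs's bound $r_{N',p} < \pi/(2\sqrt{\kappa})$ from \cite{fuchs}, guarantees that $B(P_0, r_{N',p})$ lies strictly inside the injectivity radius of $N'$; hence for each $z \in M$ there is a unique minimizing geodesic $\gamma_z : [0,1] \to N'$ joining $v_0(z)$ to $v_1(z)$. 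Setting $v_t(z) := \gamma_z(t)$ defines a one-parameter family in $W^{1,p}_{ex}(M,N')$ interpolating between $v_0$ and $v_1$, with $v_t = \Phi$ on $\partial M$ and $v_t(M) \subset B(P_0, r_{N',p})$ by geodesic convexity of the ball.

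The heart of the argument is to show that $\Psi(t) := \mathbb{E}_\epsilon(v_t)$ is strictly convex on $[0,1]$ unless $v_0 \equiv v_1$. I would differentiate $\Psi$ twice under the integral, expressing $\Psi''(t)$ in terms of the Jacobi field $V := \partial_t v_t$ along $\gamma_z$. For the unperturbed $p$-energy this calculation is carried out in \cite[Section~2]{fare}, and the integrand decomposes into two pieces: an algebraic piece that is nonnegative by convexity of $\tau \mapsto \tau^{p/2}$ applied to the differential terms, and a Jacobi-field piece whose sign is controlled by the Gauss curvature of $N'$ along $\gamma_z$. The bound on $r_{N',p}$ guarantees via standard Jacobi-field comparison estimates that the second piece remains nonnegative on $B(P_0, r_{N',p})$, so $\Psi''(t) \geq 0$ with strict inequality wherever $V \not\equiv 0$.

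Uniqueness then follows at once: since $v_0$ and $v_1$ are both minimizers, $\Psi(0) = \Psi(1) = \inf \mathbb{E}_\epsilon$, and a convex function with equal values at the endpoints of an interval must be constant there. The strict convexity then forces $V \equiv 0$, i.e., $v_0 = v_1$ almost everywhere, contradicting the assumption that the two minimizers are distinct.

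The main obstacle is to verify that the second-variation calculation of \cite{fare} survives the passage to $\epsilon > 0$. Writing $\lambda^\epsilon = (\epsilon^2 + |Dv_t|^2)^{(p-2)/2}$, the computation of $\Psi''(t)$ acquires additional contributions from differentiating $\lambda^\epsilon$ rather than $|Dv_t|^{p-2}$; these reduce to pointwise terms of the form $\tfrac{p-2}{2}(\epsilon^2 + |Dv_t|^2)^{(p-4)/2}\bigl(\tfrac{d}{dt}|Dv_t|^2\bigr)^2$, which are nonnegative and therefore only reinforce the inequality $\Psi'' \geq 0$. Hence no geometric constraint beyond (S) is required, and the Fardoun--Regbaoui scheme carries through essentially verbatim.
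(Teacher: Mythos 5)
Your route is genuinely different from the paper's, and it contains a real gap. The paper proves Proposition \ref{sect-exist-uni-Prop4} by reducing it to Theorem \ref{app-thm-uniq} in Appendix A, which is not a second-variation argument: one inserts the difference $\phi = v^{\ep}_1 - v^{\ep}_2$ of the two Nash-embedded solutions as a test map into both copies of \eqref{v-system-weak} and subtracts; the left-hand side is bounded below by the strong monotonicity inequality \eqref{phs-est-aux1} for $\xi \mapsto (\ep^2+|\xi|^2)^{\frac{p-2}{2}}\xi$, while the difference of the second-fundamental-form terms on the right is controlled by \eqref{app-A-est} and then absorbed using the Caccioppoli-type estimate of Proposition \ref{app-prop}, whose constant is $O(r^2)$. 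No convexity of the energy along a homotopy is used anywhere, and your attribution is off: the uniqueness proof of \cite[Theorem 1.1]{fare} follows this same testing/monotonicity scheme, not a Jacobi-field computation.

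The concrete gap in your proposal is the sign of the curvature contribution to $\Psi''(t)$. Along the geodesic homotopy $v_t$ with variation field $V$ (so $\nabla_t V = 0$), the second variation contains, besides the nonnegative algebraic piece coming from the convexity of $\xi\mapsto(\ep^2+|\xi|^2)^{p/2}$, a term of the form $-\int_M E'(|Dv_t|^2)\,\langle R(V,Dv_t)Dv_t,V\rangle\,dV_M$, which under the standing assumption that the curvature of $N'$ is \emph{nonnegative} on $N$ is \emph{nonpositive}: it works against convexity, and in fact the uniqueness statement is meant to hold with no sign assumption on the curvature at all, only smallness. Jacobi-field comparison along the individual geodesics $\gamma_z$ cannot rescue this, because the competition is between $\int_M |\nabla V|^2$ (derivatives in $z$) and $\int_M |R|\,|V|^2|Dv_t|^2$; the only mechanism that wins it is $V|_{\partial M}=0$ together with $|V|\le \diam N\le 2r$ and precisely the Caccioppoli-type inequality $\int_M \lambda^{\ep}|Dv_t|^2|V|^2\,dV_M \le 16 r^2 \int_M \lambda^{\ep}|\nabla V|^2\,dV_M$ of Proposition \ref{app-prop}, which shows the bad term is $O(r^2)$ times the good one. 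As written, your assertion that ``the second piece remains nonnegative'' is false, and without the Caccioppoli ingredient the convexity of $\Psi$ --- hence the entire uniqueness argument --- is unproved. (Your remark that the extra terms produced by differentiating $\lambda^{\ep}$ instead of $|Dv_t|^{p-2}$ are nonnegative for $p\ge 2$ is correct; that part of the adaptation is harmless.)
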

\begin{proof}
The uniqueness follows from Theorem~\ref{app-thm-uniq} in Appendix A.
\end{proof}

We remark that under different assumptions on the curvature sign of $M$ and $N'$, the existence and uniqueness for $p$-harmonic mappings has recently been studied by Pigola--Veronelli, see Problem A and Theorem B in~\cite{pg1} (see also references therein).

\section{Homotopy to a conformal map}\label{sect-homotopy}

\noindent
The main goal of this section is to prove the following theorem.

\begin{theorem}\label{thm-jacob}
Assume that $M$ and $N$ have $C^\infty$-smooth boundary. Further, suppose that $\phi_0 :\bdy M\to \bdy N$ is $C^\infty$-smooth diffeomorphism with positive orientation and nonvanishing tangential derivative. Then, for any fixed $\ep>0$ a weak solution $u^{\epsilon}$ to the boundary value problem

\begin{equation}\label{yhtalot}
\begin{cases}
& [\lambda^{\ep}(z) \epuz]_{\zbar}+[\lambda^{\ep}(z) \epub]_{z}+2\lambda^{\ep}(z)\left(\frac{\partial}{\partial \ups} \log \rho(\ups(z))\right) \epuz\epub=0\quad \hbox{ in }M\\
& u^{\ep}=\phi_0 \quad \hbox{ on }\bdy M
\end{cases}
\end{equation}
is $C^\infty$-smooth and has nonvanishing Jacobian determinant in $M$. Here $\lambda^{\epsilon} = \left(\epsilon^2 + |Du^{\ep}|^2\right)^{\frac{p - 2}{2}}$.
\end{theorem}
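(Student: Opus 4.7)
The overarching strategy is to reduce the theorem to showing $J_{u^\epsilon}>0$ on all of $M$, since this immediately yields $C^\infty$-smoothness by Proposition \ref{PosJacSmoothness}. To establish the positive Jacobian, I would adapt the continuity method in the spirit of Iwaniec--Onninen \cite{iwon} and Jost \cite{jost81}. Concretely, I would construct a smooth one-parameter family of boundary diffeomorphisms $\phi_0^t: \partial M \to \partial N$, $t \in [0,1]$, each with nonvanishing tangential derivative, interpolating from a reference $\phi_0^0$ (for which the associated minimizer is a diffeomorphism with strictly positive Jacobian, e.g.\ obtained from the boundary trace of a conformal diffeomorphism $M \to N$) to $\phi_0^1 = \phi_0$. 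Existence and uniqueness of the associated $\epsilon$-perturbed minimizers $u^{\epsilon,t}$ are guaranteed by Propositions \ref{existence} and \ref{sect-exist-uni-Prop4}, and each $u^{\epsilon,t}$ is $C^{1,\beta}$ up to the boundary by Corollary \ref{BeckCorollary}.

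Set
\[
I := \{t \in [0,1] : J_{u^{\epsilon,t}} > 0 \text{ everywhere on } M\},
\]
with the aim of proving $I = [0,1]$. Nonemptiness $0 \in I$ is arranged by the choice of base case. For openness, the uniform $C^{1,\beta}$-bounds on the family $\{u^{\epsilon,t}\}$ (with constants uniform in $t$, obtained by verifying Beck's hypotheses with the smooth $t$-dependence built in) together with the uniqueness in Proposition \ref{sect-exist-uni-Prop4} yield continuous dependence of $u^{\epsilon,t}$ on $t$ in the $C^1$-topology; since $J_{u^{\epsilon,t}}>0$ on the compact set $M$ is an open condition in $C^1$, openness follows.

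Closedness is the step where the subharmonicity result enters crucially. Suppose $t_n \in I$ and $t_n \to t^*$. By Arzel\`a--Ascoli applied to the uniform $C^{1,\beta}$-bounds and the uniqueness statement, $u^{\epsilon,t_n} \to u^{\epsilon,t^*}$ in $C^1(M,N')$, and hence $T^{t_n} = \lambda^\epsilon(|Du^{\epsilon,t_n}|^2)\, J_{u^{\epsilon,t_n}}$ converges uniformly to $T^{t^*}$. For each $n$, Proposition \ref{PosJacSmoothness} guarantees that $u^{\epsilon,t_n}$ is smooth, so Corollary \ref{MinPrin} applies and yields
\[
T^{t_n}(z) \geq \inf_{\partial M} T^{t_n} \quad \text{for all } z \in M.
\]
Passing to the limit gives $T^{t^*}(z) \geq \inf_{\partial M} T^{t^*}$. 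By Proposition \ref{Prop-2}, $J_{u^{\epsilon,t^*}}$ is strictly positive on $\partial M$ (the nonvanishing tangential derivative of the boundary data is preserved throughout the homotopy by construction), so $T^{t^*}>0$ on $\partial M$, and therefore $T^{t^*}>0$ on all of $M$. Thus $t^* \in I$, and $I$ is closed.

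The principal obstacle is securing the uniform $C^{1,\beta}$-estimates up to the boundary for the whole family $\{u^{\epsilon,t}\}$. This requires carefully verifying that the growth, continuity and ellipticity assumptions \cite[(2a--b), (4), (7), (9a--b)]{beck} hold with constants independent of $t$, using the maximum principle (Proposition \ref{weakmaxprin}) to keep the image of each $u^{\epsilon,t}$ inside a single coordinate chart of $N'$ so that Corollary \ref{BeckCorollary} is applicable, and then exploiting the smooth $t$-dependence of the boundary data, the metric terms and the geometric nonlinearity. Once these uniform estimates are in hand, the open-closed argument above gives $I = [0,1]$, so in particular $1 \in I$, i.e.\ $J_{u^\epsilon}>0$ throughout $M$, and the smoothness conclusion follows from Proposition \ref{PosJacSmoothness}.
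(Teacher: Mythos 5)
Your overall strategy (reduce to positivity of the Jacobian, run an open--closed continuity argument along a homotopy, use the minimum principle from Corollary \ref{MinPrin} for closedness, and invoke Proposition \ref{PosJacSmoothness} for smoothness) matches the paper's proof. However, there is a genuine gap at the base case of your homotopy. You deform only the boundary data, keeping the exponent $p$ and the parameter $\ep$ fixed, and you anchor the homotopy at $t=0$ with the boundary trace $\Psi_0$ of a conformal diffeomorphism $\Psi: M \to N$, asserting that ``the associated minimizer is a diffeomorphism with strictly positive Jacobian.'' But the minimizer of the $\ep$-perturbed $p$-harmonic energy with boundary data $\Psi_0$ is \emph{not} the conformal map $\Psi$ when $p \neq 2$: a holomorphic map $u$ has $u_{\bz}=0$ and $u_{z\bz}=0$, so the Euler--Lagrange system reduces to $\lambda^{\ep}_{\bz}\, u_z = 0$, which fails in general because $\lambda^{\ep} = (\ep^2 + \tfrac{\rho(u)}{\sigma}|u_z|^2)^{(p-2)/2}$ is not antiholomorphic. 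Hence your base-case minimizer is some unidentified map whose Jacobian positivity is exactly as hard to establish as the original claim, and $0 \in I$ is unjustified.

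The paper resolves this by homotoping the exponent as well: it sets $p_t = 2(1-t)+pt$ and lets $U^t$ minimize $\int_M(\ep^2+|DU^t|^2)^{p_t/2}$ with boundary data $\phi^t$ interpolating from $\Psi_0$ to $\phi_0$. At $t=0$ the energy is the Dirichlet energy plus a constant, so by the uniqueness result (Proposition \ref{sect-exist-uni-Prop4}) the minimizer \emph{is} the conformal map $\Psi$, which has positive Jacobian --- this is the only point in the deformation where the minimizer is explicitly known. The price is that all the uniform $C^{\gamma}$ and $C^{1,\gamma'}$ estimates of Section \ref{subs82} must be shown to hold uniformly in the varying exponent $p_t \in [2,p]$, which occupies most of that section. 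Your closedness and openness arguments are otherwise sound (the passage to the limit in the minimum principle, combined with the boundary positivity from Proposition \ref{Prop-2} and the lower bound $\lambda^{\ep} \geq \ep^{p-2}$, is essentially the paper's ``Claim''), but without repairing the base case the continuity method does not start.
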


\noindent The proof is based on constructing a smooth homotopy between the map $u^{\epsilon}$ and a conformal map and requires a careful analysis of the regularity of this homotopy up to the boundary. As such the main arguments of the proof are postponed to the end of this section, after Corollary~\ref{prop-unifnormal}.

Let $\Psi$ be any conformal mapping between ${\rm int} (M)$ and ${\rm int} (N)$. The construction of the homotopy between $u^{\epsilon}$ and $\Psi$ will be simple: First we define a homotopy between the boundary values of these two maps. After that we define a homotopy between the following systems of two equations: the Laplace equation and the Euler-Lagrange system of equations \eqref{yhtalot}, as these two systems are satisfied by the mappings $\Psi$ and $u^{\epsilon}$ respectively. Having a homotopy between the boundary values and the respective systems of equations at hand, we finally obtain a homotopy between the two mappings by solving the Dirichlet problem uniquely.

By the Carath\'eodory--Osgood--Taylor theorem extending $\Psi$ up to the boundary $\partial M$ is always possible for simply-connected Jordan domains in $\C$ and this property is preserved in the setting of Riemannian surfaces. Indeed, by assumptions $M$ and $N$ are covered by exactly one map $z$ and $\tilu$, respectively. Therefore, the extension theorem (see e.g. \cite{car1, ot}) applied to the following conformal map between Jordan domains in the complex plane:
\[
G:=\tilu|_{{\rm int} N}\circ \Psi\circ z|_{{\rm int} M}: z({\rm int}\, M)\to \tilu({\rm int}\, N)
\]
showes that $g$ possesses a homeomorphic extension, also denoted by $G$, between $z(M)$ and $\tilu(N)$ (recall in particular that both $M$ and $N$ are compact). Hence, we may define the boundary values of the conformal map $\Psi$ as follows:
\[
 \Psi_0:= \zinv \circ G=\Psi|_{\partial M}.
\]
Comparing the map $\Psi$ to map $u^\ep$, it is clear that the boundary values $\Psi_0$ and $\phi_0$ may not necessarily agree on $\partial M$, but at least we know that $\Psi$ is smooth up to the boundary and has constant sign Jacobian on $M$. Indeed, without loss of generality we may assume that $J_{\Psi}> 0$ on $M$, as we only require that $\Psi$ is conformal.

Let $\phi^t$ denote a smooth homotopy between the boundary maps $\phi^0 := \Psi_0$ and $\phi^1:=\phi_0$ such that the tangential derivative of $\phi^t$ is always nonvanishing. More precisely, since $M$ is by assumptions a surface diffeomorphic to a unit disc and $\partial M\not=\emptyset$ we may assume without the loss of generality that $\partial M$ is a closed arc, whose image in $\C$ under map $z$ we denote by $\gamma_M$. Let us define $\widetilde{\Psi_0}:\gamma_M\to \partial N$ as follows  $\widetilde{ \Psi_0}:=\Psi_0 \circ \zinv|_{\gamma_M}$. Similarly we define a map $\widetilde{\phi_0}: \gamma_M\to \partial N$ corresponding to boundary map $\phi_0$. Next, we define the associated velocity functions: $\tau_0:=\frac{\partial \widetilde{\Psi_0}(e^{is})}{\partial s}$ and $\tau_1:=\frac{\partial \widetilde{\phi_0}(e^{is})}{\partial s}$, where $s\in I$, for some closed interval $I\subset \R$. We set a homotopy between the speed functions, as follows:
\[
 |\tau_t(s)|=(1-t)|\tau_0(s)|+t|\tau_1(s)|,\quad t\in [0,1].
\]
One checks by the direct computations that $\int_{I}  |\tau_t(s)| ds=|\partial N|$, see \cite[Section 4]{iwon} for the similar reasoning. As consequence, for every $t\in [0,1]$ function  $|\tau_t|$ represents a unique diffeomorphism $\phi^t: \partial M\to \partial N$. For each $t\in[0,1]$ we denote by $\Phi^t$ a Lipschitz extension of the boundary value map $\phi^t$ into $M$ obtained from the McShane extension theorem. From now on we fix extensions $\Phi^t$ for all $t$.

\subsection{An auxiliary family of systems of equations}

Recall that by $M$ and $N$ we denote Riemannian surfaces equipped with conformal metrics $\sigma$ and $\rho$ respectively.

Fix $\ep\in(0,1)$. To define the homotopy between $u^\epsilon$ and the conformal map $\Psi$, we note that since $\Psi$ is conformal it is also $2$-harmonic. Hence, we vary the exponent from $2$ to $p$ in our defining energy integral \eqref{ep-energy}, and use the fact that the Dirichlet problem has the unique solution with given boundary data $\phi^t$ to our advantage, Proposition \ref{sect-exist-uni-Prop4}. For our given exponent $p\geq 2$ we define a homotopy from the exponent $2$ to $p$ as follows:
\begin{equation}\label{pt-est}
p_t := 2(1-t) + pt\quad \hbox{for } t\in[0,1];\quad 2\leq p_t \leq p\quad\hbox{for all } 0\leq t \leq 1.
\end{equation}

This leads us to consider the following particular cases of energy functional \eqref{ep-energy} and system \eqref{u-system-weak} for $U^t \in W^{1, p_t}_{ex}(M, N')$ with $U^t|_{\partial M} = \phi^t$:
\begin{align}
& \mathbb{E}_{\ep,p_t}(U^t)=\int_{M} \left(\ep^2+|DU^t|^2\right)^{\frac{p_t}{2}}\,dV_M, \label{Ft-system} \\
&[\lambda^{\ep}(z) U^t_z]_{\zbar}+[\lambda^{\ep}(z) U^t_{\zbar}]_{z}+\lambda^{\ep}(z)\left(\frac{\partial}{\partial U^t} \log \rho(U^t(z))\right) U^t_z U^t_{\zbar}=0, \hbox{ where } \lambda^{\ep}:=\left(\ep^2+|DU^t(z)|^2\right)^{\frac{p_t-2}{2}}. \nonumber
\end{align}
Thus, with this  notation $U^1 = u^\ep$.

\begin{rem}
 Recall, that the above mappings $U^t : M \to N'$ satisfy both of the conclusions of Propositions \ref{strongmax} and \ref{Prop-2}. In particular, it holds that $U^t(M) \subset N$ with no points in the interior being mapped to the boundary, and the Jacobian $J_{U^t}$ is positive on the boundary $\partial M$. By Corollary \ref{BeckCorollary} it holds that $U^t$ has also continuous derivatives up to the boundary. Moreover, the existence and uniqueness for $U^t$ follows from Propositions~\ref{existence}, \ref{sect-exist-uni-Prop4}.
\end{rem}

In what follows we will need the regularity properties of mappings $U^t$ up to the boundary of $M$, and for this reason we again employ results of Beck~\cite{beck} and follow the discussion from Section~\ref{sec-C1alp}. In particular, in order to do this we express the energy \eqref{Ft-system} as an energy with respect to the domain in $\C$. Hence, by using the coordinate chart $\tilu$ we set
\[
 \tilU^t=\tilu(U^t)
\]
to be map from $M\to \C$. In consequence, we arrive at  the energy integral \eqref{f-ep-energy2} and the integrand \eqref{f-ep-energy} with $s$ and $p$ corresponding now to $\tilU^t$ and $p_t$, respectively.

\subsection{Uniform H\"older and $C^{1,\alpha}$-estimates}\label{subs82}
In this section we study the properties of mappings $\tilU^t$ in more depth. First, we prove the H\"older estimates on $M$, uniform with respect to the homotopy parameter $t$. It turns out that the result of Beck plays a crucial role in such investigations, cf. Theorem~\ref{Thm14-Beck}.

Note that verifying the assumptions of Theorem~\ref{Thm14-Beck} for the energies $\mathbb{E}_{\ep,p_t}$ (cf. \eqref{Ft-system} above), with respect to mappings $\tilU^t$ and exponents $p_t$, for $t\in [0,1]$, reduces to the discussion in Section~\ref{sec-C1alp}. Indeed, by \eqref{pt-est} we have uniform estimates for $p_t$ in Theorem~\ref{Thm14-Beck} from which the estimates involving $p_t$ reduce to the similar estimates for $p$, e.g. $c(p_t)L(1+|\zeta|)^{p_t}$ can be estimated by $c(p)L(1+|\zeta|)^{p}$.

\begin{prop}\label{prop-unifsmooth}(Uniform $C^\gamma$-estimate). There exists $\gamma > 0$ such that
\[
\|\tilU^t\|_{C^{\gamma}(M)} \leq C
\]
uniformly for $t \in [0,1]$. The constant $C$ depends on the same set of parameters as the corresponding constant in the proof of Theorem~\ref{Thm14-Beck}.
\end{prop}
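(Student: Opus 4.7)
The plan is to deduce the uniform $C^\gamma$-estimate as a direct consequence of Beck's Theorem~\ref{Thm14-Beck}, applied to each $\tilde{U}^t$ separately, by verifying that every quantitative ingredient in Beck's estimate can be bounded independently of the homotopy parameter $t\in[0,1]$. Recall that $\tilde{U}^t=\tilde{u}\circ U^t$ solves the Dirichlet problem for the planar energy $\int_{\Omega} W_{\epsilon,p_t}(z^{-1},\tilde{U}^t,D\tilde{U}^t)\,d\mathcal{L}^2$, with boundary data $\tilde{u}\circ\phi^t\circ z^{\mathrm{inv}}$, where $W_{\epsilon,p_t}$ has the same form as \eqref{f-ep-energy} but with exponent $p_t/2$ in place of $p/2$.

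First I would verify that Beck's structural assumptions (2a--b), (4), (7), (9a--b) hold for the family $\{W_{\epsilon,p_t}\}_{t\in[0,1]}$ with constants uniform in $t$. This amounts to re-running the verification already carried out in Section~\ref{sec-C1alp} for the single energy $W_{\epsilon,p}$, but now observing that the exponent $p_t$ lies in the compact interval $[2,p]$ by \eqref{pt-est}. In particular, the growth constants $\nu$ and $L$ of \eqref{nuL-constants} depend monotonically and continuously on $p_t$ and so can be chosen uniform in $t$; the modulus of continuity in \eqref{f-ep-ass44} only picks up a factor of the form $c(p_t,\epsilon)$, which is bounded by $c(p,\epsilon)$ for all $t$; and the ellipticity bounds \eqref{f-ep-ass9} are of the form $\nu(1+|\zeta|)^{p_t-2}|\xi|^2\leq D_{\zeta\zeta}W_{\epsilon,p_t}(\cdot)\xi\cdot\xi\leq L(1+|\zeta|)^{p_t-2}|\xi|^2$, with $\nu,L$ depending on $p_t$ only in a bounded manner on $[2,p]$. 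All of these constants depend exclusively on $\epsilon$, $p$, the metrics $\sigma$ and $\rho$, the coordinate chart, and the map $z$, but not on $t$.

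Next I would control the boundary data uniformly. By the construction preceding this section, $\{\phi^t\}_{t\in[0,1]}$ is a smooth homotopy of $C^\infty$-diffeomorphisms $\partial M\to\partial N$ with nonvanishing tangential derivative, and hence $\|\tilde{u}\circ\phi^t\circ z^{\mathrm{inv}}\|_{C^{1,2\beta}(\partial\Omega)}$ is bounded uniformly in $t$ for any $\beta\in(0,1/2)$. Since this is the only dependence of the estimate of Theorem~\ref{Thm14-Beck} on the boundary data, we obtain a uniform bound here as well. Finally, the total energy $\mathbb{E}_{\epsilon,p_t}(\tilde{U}^t)$ is controlled by $\mathbb{E}_{\epsilon,p_t}(\Phi^t)$ through minimality, and the right-hand side is uniformly bounded because $\Phi^t$ is a Lipschitz homotopy whose Lipschitz constant can be taken uniform in $t$. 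Combining these three ingredients and applying Beck's theorem to each $\tilde{U}^t$ yields $\|\tilde{U}^t\|_{C^{1,\beta}(\overline{\Omega})}\leq C$ with $C$ independent of $t$, and taking $\gamma=\beta$ gives the claim.

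The main obstacle is that Theorem~\ref{Thm14-Beck} is stated for a single energy with fixed boundary data, so one has to check that the constant in Beck's estimate depends on the structural parameters of the energy and on $\|g\|_{C^{1,2\beta}}$ in a quantifiable way and nothing else. This amounts to a careful reading of Beck's proof to confirm that the constant can be made explicit in $\nu,L,\omega,\|g\|_{C^{1,2\beta}}$ and the geometry of $\Omega$; given that Beck's argument proceeds by local Caccioppoli and Campanato-type estimates whose constants are standard, this tracking is essentially mechanical. Once uniformity of these structural quantities over the family $\{(W_{\epsilon,p_t},\phi^t)\}_{t\in[0,1]}$ has been established as above, the proposition follows.
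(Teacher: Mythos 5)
Your proposal is correct and follows essentially the same route as the paper: verify that Beck's structural constants are uniform in $t$ because $p_t\in[2,p]$, bound the boundary data and the $p_t$-energy uniformly via the uniformly Lipschitz extensions $\Phi^t$ (the paper does this through the Caccioppoli-type estimate \eqref{CacU-1}, which is equivalent to your direct minimality comparison), and then invoke Theorem~\ref{Thm14-Beck}. The only bookkeeping point you gloss over is that Beck's constant also depends on the higher-integrability norm $\|D\tilU^t\|_{L^{q_t}(M)}$, which the paper reduces to the uniform $p_t$-energy bound via the Gehring-type Lemma 3.3 of \cite{beck}; the paper additionally records the more elementary Morrey-embedding route to the H\"older bound on $\tilU^t$ itself, but neither difference affects the validity of your argument.
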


\begin{proof}
As in Section~\ref{sec-C1alp} we apply a change of variables $\zinv$ to the integrand of \eqref{Ft-system} (considered with respect to $\tilU^t$ and $p_t$)  in order to obtain an integral on $z(M)\subset \C$ and the corresponding map solving the Euler-Lagrange system of equations of energy $\mathbb{E_\ep}(\tilU^t)$. Then, by the proofs of Theorems 1.3 and 1.4 in \cite{beck} (cf. Theorem~\ref{Thm14-Beck} above) we get the bound for the H\"older norm of $D\tilU^t$ on $z(M)$ (and hence on $M$), see the details of the proof in \cite[Section 7]{beck}. Indeed, Step 2b of the proof of Theorem 1.3 together with Lemma 3.3 in \cite{beck} imply that
\begin{equation}\label{est1-unifsmooth}
 \|D\tilU^t\|_{C^{0,\beta}(M)}\leq C(p_t, \nu, L, \|\tilU^t\|_{L^{\infty}(M)}, \|D\tilU^t\|_{L^{q_t}(M)}),
\end{equation}
where $q_t$ stands for the higher integrability exponent for $\tilU^t$, cf. \cite[Lemma 3.3]{beck}.
Since for all $\tilU^t$ the target domain is $\tilu(N)$, we have that $\|\tilU^t\|_{L^{\infty}(M)}\leq \diam (\tilu(N))$ uniformly in $t$. The uniform  bound in $t$ for $\|D\tilU^t\|_{L^{q_t}(M)}$ requires some more work. First, note that, since $\tilU^t=\tilu(U^t)$, then
\begin{equation}\label{estDu-sect 82}
 \int_{M} |D\tilU^t|^{p_t}\,dV_M\leq \int_{M} |DU^t|^{p_t} |D\tilu|^{p_t}\,dV_M \leq \|\tilu\|_{W^{1,\infty}(M)}^{p_t}\int_{M}|DU^t|^{p_t}\,dV_M.
\end{equation}

Recall that $\Phi^t$ denote extensions of the boundary value maps $\phi^t$ into $M$ (see the beginning of Section~\ref{sect-homotopy} for a detailed description of $\phi^t$ and their extensions $\Phi^t$). By applying the Caccioppoli-type estimate (which we will prove in the next section, see \eqref{CacU-1}) we get that
\begin{equation}\label{Prop5-intpt}
 \int_{M} |DU^t|^{p_t}\,dV_M \leq c\int_{M} \left(\ep^2+|D\Phi^t|^2\right)^{\frac{p_t-2}{2}} |D\Phi^t|^2\,dV_M.
\end{equation}
By choosing the extensions $\Phi^t$ appropriately, one can see that the right hand side of \eqref{Prop5-intpt} can be controlled uniformly in $t$, cf. the introduction to Section~\ref{sect-homotopy}. Indeed, since  the boundary homotopy $\phi^t$ is assumed to be smooth on $M$, then the Lipschitz norms of $\phi^t$ are uniformly bounded in $t$  in terms of the data $\Psi_0$ and $\phi_0$. Then, the McShane extension theorem applied to maps $\Phi^t$, the Lipschitz extensions of $\phi^t$ into $M$, implies that also the Lipschitz norms of $\Phi^t$ are bounded uniformly in $t$.

Next, we apply the Gehring-type estimate in Lemma 3.3(a) in \cite{beck} in order to obtain the estimate of $\int_{M} |D\tilU^t|^{q_t}\,dV_M$ in terms of its $p_t$-energy. This, combined with \eqref{estDu-sect 82} and \eqref{Prop5-intpt}, result in the uniform in $t$ estimate for $\|D\tilU^t\|_{L^{q_t}(M)}$. In fact, despite $q_t$ depends on number of parameters, the $p_t$ is the only one where the homotopy parameter comes in. One verifies directly, that the right-hand side of the inequality in the statement of \cite[Lemma 3.3(a)]{beck} can be estimated in terms of expressions whose powers depend on $p, \gamma$ and the dimension $n=2$ only (due to the embedding of spaces $L_{loc}^{p}\hookrightarrow L_{loc}^{p_t}$ on balls). As $U^t$ is $C^{1, \beta}$-smooth, this discussion justifies writing $\|D\tilU^t\|_{L^{p}(M)}$ in estimates below.

 Furthermore, the careful analysis of constants in the proof of Theorem~\ref{Thm14-Beck} reveals that the above constant $C$ is, in fact, independent of $p_t$, again due to the uniform bound $2\leq p_t\leq p$. In a consequence, \eqref{est1-unifsmooth} holds true for $C$ independent of $t$.

 By the definition of the (intrinsic) Sobolev spaces in Section~\ref{sec-basic}, we have that $C^{\infty}(M, N)$ are dense in the Sobolev norm in $W^{1, p_t}(M, N)$. Moreover, since all $p_t\geq 2$, the Morrey embedding theorem is available. In particular, Theorem 2.8 in Hebey~\cite[Section 2.6]{hebey} applied to every component function of $\tilU^t$ allows us to conclude
that for $\beta'=1-\frac{2}{p_t}$ it holds
\[
\|(\tilU^t)^i\|_{C^{0,\beta'}}\leq C(p_t, M) \|(\tilU^t)^i\|_{W^{1, p_t}(M)},\qquad i=1,2.
\]
Notice that $0\leq \beta' \leq 1-\frac{2}{p}$. Furthermore, the dependence of constant $C$ on $p_t$ can be reduced to the dependence on $p$, as in the Euclidean case, see e.g. the proof of Theorem 7.1 in \cite{gt} and the remark following it on pg. 158 in \cite{gt}. By combining the Morrey estimate with \eqref{est1-unifsmooth}--\eqref{Prop5-intpt} and taking into account that, as already observed in the begining of the proof, $\|\tilU^t\|_{L^{\infty}(M)}\leq \diam (\tilu(N))$ uniformly in $t$ we obtain that
\[
 \|\tilU^t\|_{C^{0,\beta}(\overline{M})}\leq 4C(p, \nu, L, \diam_{\sigma} (N), \|D\tilU^t\|_{L^{p}(M)}).
\]
Here the constant $\beta$ is the minimum of the H\"older exponent in Theorem~\ref{Thm14-Beck} and $1-\frac{2}{p}$.

Hence we proved that the H\"older continuity norm of $\tilU^t: z(M)\to \C$ is uniformly bounded by an expression independent of $t$. In order to complete the proof, we notice that when coming back to the setting of mappings between $M$ and $N$ (via map $\zinv$), the above estimates change only by the factor of $\|D\zinv\|_{L^p(z(M))}$. This, however, does not affect the validity of the assertion. Thus, the proof of Proposition~\ref{prop-unifsmooth} is completed.
\end{proof}
In the next result we show the local H\"older continuity of $D\tilU^t$ uniformly with respect to the homotopy parameter $t$ for any fixed value of $\ep>0$.

\begin{prop}[$C^{1,\alpha}$-estimate up to the boundary] \label{prop-unifderiv}
There exists an exponent $\gamma' > 0$ such that the following uniform estimate holds for the family $\tilU^t$ up to the boundary of $M$:
\begin{equation}
\|D\tilU^t\|_{C^{\gamma'}(M)} \leq C.
\end{equation}
The constants $C$ and $\gamma'$ depend on the same set of parameters as the corresponding constants in the proof of Theorem~\ref{Thm14-Beck}, but are  uniform in $t$, in particular $C$ and $\gamma'$ do not explode when $p_t\searrow 2$ or $p_t\nearrow p$.
\end{prop}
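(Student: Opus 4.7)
The plan is to invoke Beck's boundary $C^{1,\beta}$ regularity result (Theorem~\ref{Thm14-Beck}) applied to each $\tilde U^t$ and then to carefully track the dependence of the constants on the homotopy parameter $t$, showing that they stay bounded away from zero and infinity. All the required structural assumptions on the integrand $W_\ep$ (conditions (2a-b), (4), (7), (9a-b) of \cite{beck}) have already been checked in Section~\ref{sec-C1alp}, so the main work is the uniformity analysis.

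First I would set up the problem in planar coordinates as in the proof of Proposition~\ref{prop-unifsmooth}: pushing forward by $z:M\to\overline\Om$ and composing with the chart $\tilu$, the map $\tilU^t$ becomes a minimizer of an energy integrand $W_{\ep,p_t}$ on $\Om\subset\C$ of the form \eqref{f-ep-energy} with exponent $p_t$ in place of $p$. The key observation is that the ellipticity constants $\nu(p_t),L(p_t)$ from \eqref{nuL-constants} and \eqref{f-ep-ass9}, as well as the modulus-of-continuity constants arising in \eqref{f-ep-ass44}, depend on $p_t$ only through continuous functions of $p_t$ on the compact interval $[2,p]$; hence they admit uniform-in-$t$ bounds $\nu_\ast,L_\ast$ and a single modulus $\omega_\ast$. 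Crucially, since we are in the perturbed regime $\ep>0$, these bounds do \emph{not} degenerate.

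Next I would verify that the two a priori quantities on which Beck's constant in Theorem~\ref{Thm14-Beck} depends, namely $\|\tilU^t\|_{L^\infty}$ and $\|D\tilU^t\|_{L^{q_t}}$ for the Gehring-type exponent $q_t>p_t$, are bounded uniformly in $t$. The $L^\infty$-bound is immediate since $\tilU^t(M)\subset\tilu(N)$ has diameter controlled by $\diam(\tilu(N))$, independent of $t$. The $L^{q_t}$-bound follows exactly as in Proposition~\ref{prop-unifsmooth}: the Caccioppoli-type inequality \eqref{CacU-1} controls $\int_M |DU^t|^{p_t}dV_M$ by the energy of the extension $\Phi^t$, which is uniformly bounded since the boundary homotopy $\phi^t$ is smooth with uniform Lipschitz norm and the McShane extension inherits this bound; then Lemma~3.3(a) of \cite{beck} upgrades this to a uniform $L^{q_t}$-bound on $D\tilU^t$, with the exponents of $\|D\tilU^t\|_{L^{p_t}}$ in that lemma controlled only by $p$, $\gamma$ and the dimension.

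With these uniform inputs in hand, Theorem~\ref{Thm14-Beck} applied to each $\tilU^t$ yields the estimate
\[
\|\tilU^t\|_{C^{1,\beta(t)}(M)}\le C\bigl(p_t,\nu_\ast,L_\ast,\omega_\ast,\|\tilU^t\|_{L^\infty},\|D\tilU^t\|_{L^{q_t}}\bigr),
\]
and the uniformity arguments above let us replace this by a single constant $C$ and a single exponent $\gamma':=\inf_{t\in[0,1]}\beta(t)>0$, provided $\beta(t)$ does not collapse to $0$ as $t\to 0$ or $t\to 1$. The main obstacle of the proof is precisely this last point: one must inspect the quantitative H\"older exponent produced in Beck's argument (which is built from the Gehring exponent and from the continuity moduli $\omega$ and $\omega_1$) and check that all of its defining quantities vary continuously in $p_t$ on $[2,p]$ and stay in their admissible open ranges. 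Since $p_t\in[2,p]$ is a compact range and $\ep>0$ keeps the equation uniformly elliptic throughout the homotopy, a continuity/compactness argument then yields the desired uniform positive lower bound on $\beta(t)$, completing the proof with a constant $\gamma'>0$ and a bound $C$ that are both independent of $t$.
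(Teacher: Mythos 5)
Your overall strategy coincides with the paper's: reduce to the planar integrand $W_{\ep,p_t}$, bound the two a priori quantities ($\|\tilU^t\|_{L^\infty}\le\diam(\tilu(N))$ and $\|D\tilU^t\|_{L^{q_t}}$ via the Caccioppoli estimate \eqref{CacU-1}, the uniformly Lipschitz McShane extensions $\Phi^t$, and the Gehring lemma) uniformly in $t$, and then invoke Beck's boundary regularity. Steps (i)--(iii) of your argument match the paper's Proposition~\ref{prop-unifsmooth} and the opening of its proof of Proposition~\ref{prop-unifderiv} almost verbatim.

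The gap is in the final step, which you yourself flag as ``the main obstacle'': you assert that the constant and the H\"older exponent produced by Beck's proof ``vary continuously in $p_t$ on $[2,p]$'' and conclude by a compactness argument that $\gamma'=\inf_t\beta(t)>0$. This is not something one can read off from the \emph{statement} of Theorem~\ref{Thm14-Beck}: for each fixed $t$ the theorem only provides \emph{some} $\beta(t)>0$ and \emph{some} $C(t)<\infty$, and the mere existence of these for each $t$ gives no lower bound on $\inf_t\beta(t)$ nor upper bound on $\sup_t C(t)$; continuity (or even local boundedness) of $p\mapsto(\beta,C)$ must be extracted from the internals of the proof. That extraction is precisely what the paper's proof consists of: it establishes the $t$-uniform Campanato decay \eqref{prop-Beck-form} with an exponent $\lambda$ depending only on $p$, $p^*$, $\sup_N|\rho|$ and $\inf_M|\sigma|$, by auditing one by one the ingredients of Beck's argument --- re-deriving the Gehring/higher-integrability step (Lemma~3.3 of \cite{beck}, via \cite{dgk}) for the $p_t$-energy with $t$-independent constants, and checking Lemmas~4.1 and~4.4, Proposition~4.2 (Campanato's theorems), and Steps~2(a)--2(b) of Beck's Theorem~1.3, in each case reducing the dependence on $p_t$ to a dependence on $p$ alone using $2\le p_t\le p$. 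Your proposal defers exactly this audit to an unperformed ``inspection,'' so the decisive part of the proof is missing; to complete it you would need to carry out the verification of those lemmas (or at least exhibit explicit formulas for $\beta$ and $C$ in terms of quantities already bounded uniformly in $t$).
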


\begin{proof}
 Let us analyze the steps of the proof for Theorem 1.4 in \cite{beck} and the related results to which that proof appeals to.
 Since the proof in \cite{beck} relies on a number of auxiliary results presented in \cite{beck} and refers to many other observations well-established in the literature, below we restrict ourselves only to sketching the main ideas and focus our attention on arguments showing the independence of the corresponding constants on $p_t$.

  Our goal is to explain that the following counterpart of \cite[Formula (31)]{beck} holds for mappings $\tilU^t\circ \zinv: z(M)\to\C$. In what follows we will abuse the notation and write $\tilU^t$ to denote these maps (since maps $\tilU^t$ understood as maps from $M$ to $\C$ do not appear explicitly in the proof). Denote by
 $$
  B_{r}^{+}(x_0)=\{x=(x_1, x_2)\in \C: |x-x_0|<r, x_2\geq x^2_0\},
 $$
 a half ball in $\C$, centered at point $x_0=(x^1_0,x^2_0)\in \C$ in the boundary of the image of $z(M)$, obtained by flattening the boundary, cf. the discussion in the beginning of Section~\ref{sec-basic}. Furthermore, let $V(D\tilU^t)$ stand for the following expression (cf. Lemma~\ref{lem93} below):
 \[
 V(D\tilU^t):=\big(\ep^2+ |D\tilU^t|^2\big)^{\frac{p_t-4}{2}}D\tilU^t.
 \]
 Notice that in the definition of $V$ we use the norm as defined in \eqref{eq-Du-norm}. Then, for $r\leq r_0$ it holds by \cite[Formula (31)]{beck} that:
 \begin{equation}\label{prop-Beck-form}
  \int_{B_{r}^{+}(x_0)}\Big|V(D\tilU^t)-\left(V(D\tilU^t)\right)_{r, x_0}\Big|^2\,dx\leq c(p, \sigma, \rho)r^{2(1+\lambda)},
 \end{equation}
 where $r_0$ is independent of $x_0$ and depends on $p$, $\|\tilU^t\|_{L^{\infty}(M)}<\diam N$ (for all $t$) and $\|D\tilU^t\|_{L^{q_t}}(M)$ (with $q_t$, the higher integrability exponents as in Proposition~\ref{prop-unifsmooth}). Thus, by Proposition~\ref{prop-unifsmooth} the dependence of $r_0$ on $\|D\tilU^t\|_{L^{q_t}(M)}$ can be reduced to the dependence on $\|D\tilU^t\|_{L^{p}(M)}$. As for exponent $\lambda>0$, see \eqref{prop-Beck-form}, it depends on $p$, the Sobolev conjugate exponent $p^{*}$ (set to be equal, for instance, $p+1$, as $p\geq 2=n$) and also on $\sup_{N}|\rho|$ and  $\inf_{M}|\sigma|$.

 Let us also discuss exponents $\alpha_1$ and $\alpha_2$ appearing in Beck's proof and arising from structure conditions (4) and (6b):  $\alpha_1$ can be taken as an arbitrary number in $(0,1)$, see the discussion in the paragraph before \eqref{f-ep-ass9}. As for $\alpha_2$, it arises in \cite{beck} as the modulus of continuity for term $h=h(x,u)$ in the energy integral \cite[formula (1)]{beck}. However, such a term does not appear in our case, and so $\alpha_2$ can be neglected.

 Once we show that estimate \eqref{prop-Beck-form} is independent of $p_t$, then the argument for H\"older continuity of $D\tilU^t$ follows from the standard Campanato's characterization of H\"older continuous functions and the proposition is proven (for further details we refer to pg. 821 in \cite{beck}).

 The key tools leading to \eqref{prop-Beck-form} are  Lemmas 3.3, 4.1 and 4.4, Proposition 4.2 (all stated in \cite{beck}) and the general approach from Steps 2a and 2b of the proof for \cite[Theorem 1.3]{beck}. We will discuss that the dependence on $p_t$ in constants appearing in the aforementioned tools can be reduced to dependence on $p$. Upon completing this task, one can check directly the proof \cite[Theorem 1.4]{beck} on pg. 820-821 that, due to \eqref{pt-est}, the constants in the remaining inequalities can as well be estimated by constants where dependence on $p_t$ is reduced to the dependence on $p$. The same applies to constants denoted in \cite[pg. 820-821]{beck} as $\delta, \sigma$ and $\tau_0$.

{\bf Lemma 3.3.} The discussion of the similar Gehring-type inequalities can be found, for example, in another work of Beck~\cite[Lemma 4.2]{Beck2009} and in Duzaar--Grotowski--Kronz~\cite[Lemma 3.1]{dgk}. Since the latter result is proven for the $p$-harmonic type energies, we choose its proof to discuss the uniformity of constants with respect to $t$.

By the discussion in \cite{dgk} it is enough to localize the higher integrability estimate to set, which in our case reads, $B^+:=B(0, R)\cap \{\Im z> 0\}$. Let $x_0\in B^+$ and as in \cite{dgk} we study two cases: (1) $\Im x_0\leq \frac{3}{4}r$, and (2) $\Im x_0> \frac{3}{4}r$ for $0<r<R-|x_0|$. Since the analysis of constants follows the similar approaches in both cases we discuss the first case only (see formula (15) in \cite{dgk}). Upon choosing $0<w<s\leq r$ and the test function $\eta\in C_0^{\infty}(B(x_0, s))$, $0\leq \eta \leq 1$ with $\eta|_{_{B(x_0, w)}}\equiv 1$ and $|\nabla \eta|\leq \frac{2}{s-w}$ we have that, by $\tilU^t$ minimizing the energy \eqref{Ft-system}, it holds
\begin{align*}
c(p, \rho, \sigma, \zinv) \int_{B(x_0, w)\cap B^+} |D\tilU^t|^{p_t}&\leq \int_{B(x_0, w)\cap B^+} \big(\ep^2+|D\tilU^t|^2\big)^{\frac{p_t}{2}}\,|J_{\zinv}| \\
& \leq \int_{B(x_0, s)\cap B^+} \big(\ep^2+\big|D(\tilU^t-\eta(\tilU^t-\Phi^t))\big|^2\big)^{\frac{p_t}{2}}\,|J_{\zinv}|.
\end{align*}
Recall that $\Phi^t$ stand for the fixed Lipschitz extensions of the corresponding maps $\phi^t$ (see Section~\ref{sect-homotopy} and the discussion following \eqref{Prop5-intpt}). Next, one applies the classical Young inequality $a^{p_t}\leq 3^{p_t}(b^{p_t}+c^{p_t}+d^{p_t})$ for $a = b + c + d$, $b,c,d\geq 0$ to the right hand side, together with Lemma 2.1 in \cite{dgk} to obtain a counterpart of estimate (13) in \cite{dgk}:
\begin{align*}
 c(p, \rho, \sigma, \zinv) \vint_{B(x_0, \frac{r}{2})\cap B^+} |D\tilU^t|^{p_t}  & \leq c(p^t, \zinv)\left(\vint_{B(x_0, r)\cap B^+} \frac{|\tilU^t-\Phi^t|^{p_t}}{r^{p_t}}+ \vint_{B(x_0, r)\cap B^+} |D\Phi^t|^{p_t}+\ep^{p_t}|B^+|\right).
\end{align*}
Finally, we apply the Sobolev inequality to the first term on the right-hand side together with the H\"older inequality and arrive at the counterpart of (14) in \cite{dgk}. The arising constants are, upon the increment, independent on $t$ (by \eqref{pt-est}). In consequence, \cite[Theorem 2.4]{dgk} gives us a counterpart of the assertion of \cite[Lemma 3.1]{dgk}.

{\bf Lemma 4.1.} For the proof of this lemma,  \cite{beck} appeals to Lemma 3.2 in \cite{dgk}. However, the proof of that result is a direct consequence of the just described counterpart of Lemma 3.1 in \cite{dgk}.  In particular, assumptions (17) in \cite{dgk} are substituted by the analogous growth conditions on the integrand in the Dirichlet energy~\eqref{Ft-system}. Moreover, $h:=\Phi^t$ and $\overline{q}\in (p,\infty)$ can be chosen arbitrary. As in the case of Lemma 3.3, all constants do not depend on $t$.

 Let us now focus our attention on {\bf Lemma 4.4}. The exponent $p_t$ appears in a constant, denoted by Beck, $c(p,\nu)$, cf. pg. 806 in \cite{beck}. This constant arises from geometric inequalities for $p$-harmonic-type vector fields and can be estimated from below in terms of $p$ via \eqref{pt-est}. Then, constants $\nu$ and $L$ appearing in \cite[formula (13)]{beck} are in our case expressed in terms of $p$ and the geometry of $\rho$ and $\sigma$. The remaining part of the proof of Lemma 4.4 relies on Lemmas 3.3 and 4.1 and estimates where the dependence of exponents and constants on $p_t$ is reduced to the dependence on $p$.

 The proof of {\bf Proposition 4.2} follows for $p\geq 2$ from Theorems 3.I, 6.II (also 6.I) in Campanato~\cite{camp}. The careful scrutiny of proofs of these results reveals that in all estimates dependence on $p_t$ can be reduced to the dependence on $p$ uniformly.

 Finally, the proof of Theorem 1.4 in \cite{beck} appeals to {\bf Steps 2(a)} and {\bf 2(b)} of \cite[Theorem 1.3]{beck}. Since, $\dim M=2\leq p_t$ for all $t\in[0,1]$, Step 2(b) is crucial. There, the dependence of constants on $t$ comes through \cite[estimate (25)]{beck} and Step 2(a). The estimate relies on Proposition 4.2 and Lemma 4.4 already discussed above to give the uniform estimates in $t$. The same applies to Step 2(a).
\end{proof}

By using Propositions~\ref{prop-unifsmooth} and \ref{prop-unifderiv} we show the following result.

\begin{prop}\label{prop-smoothfamily}
The mapping family $U^t : [0,1] \times M \to N$ is $C^{1,\gamma''}$-regular in $M$ for some $\gamma''>0$ depending on the same set of parameters as the H\"older exponent in Proposition~\ref{prop-unifderiv}.
\end{prop}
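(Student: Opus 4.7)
The plan is to promote the uniform-in-$t$ spatial estimate of Proposition~\ref{prop-unifderiv} into joint regularity of the homotopy $(t, z) \mapsto U^t(z)$, first qualitatively by a compactness--uniqueness argument, and then quantitatively by linearizing the Euler--Lagrange system in the parameter $t$.

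The spatial part is immediate: Proposition~\ref{prop-unifderiv} provides a uniform bound $\|U^t\|_{C^{1,\gamma'}(M)} \leq C$, so by Arzel\`a--Ascoli the family is precompact in $C^{1,\gamma''}(M,N)$ for every $\gamma'' \in (0,\gamma')$. For continuity in $t$, take any sequence $t_n \to t_\infty$ and extract a subsequential limit $\bar U$ in $C^{1,\gamma''}$. Since $p_{t_n} \to p_{t_\infty}$, $\phi^{t_n} \to \phi^{t_\infty}$ smoothly, and the $C^{1,\gamma''}$-convergence controls $DU^{t_n}$ in $L^\infty$, one may pass to the limit in the weak Euler--Lagrange formulation~\eqref{u-system-weak}. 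Hence $\bar U$ is a weak solution with data $(\phi^{t_\infty}, p_{t_\infty})$, and the uniqueness result (Proposition~\ref{sect-exist-uni-Prop4}) identifies $\bar U = U^{t_\infty}$. Since every subsequential limit is the same, the entire sequence converges, so $t \mapsto U^t$ is continuous into $C^{1,\gamma''}$.

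To upgrade continuity to a H\"older modulus in $t$, set $V := U^{t_1} - U^{t_2}$ inside a common coordinate chart (available by the smallness of $N$). Applying the fundamental theorem of calculus to every $t$-dependent coefficient in the difference of the two Euler--Lagrange systems produces a linear divergence-form equation
\begin{equation*}
\div(A(z)DV) + B(z) \cdot DV + C(z) V = \div F(z) + g(z), \qquad V\big|_{\partial M} = \phi^{t_1} - \phi^{t_2},
\end{equation*}
whose coefficients $A,B,C$ lie in $C^{\gamma'}(M)$ with bounds controlled by the uniform $C^{1,\gamma'}$-norm from the first step, and whose principal matrix $A$ is uniformly elliptic because $\ep > 0$ keeps $\lambda^\ep$ bounded away from $0$ and $\infty$ (cf.~Corollary~\ref{BeckCorollary}). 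Since $p_{t_1} - p_{t_2} = (p-2)(t_1-t_2)$ and $\phi^t$ is a smooth homotopy, the source data satisfy $\|F\|_{C^{\gamma'}} + \|g\|_{C^{\gamma'}} + \|\phi^{t_1}-\phi^{t_2}\|_{C^{1,\gamma'}} \leq C|t_1-t_2|$. Standard Schauder estimates for linear elliptic systems then deliver $\|V\|_{C^{1,\gamma''}(M)} \leq C|t_1-t_2|^{\beta}$ for some $\beta = \beta(\gamma',\gamma'') > 0$, which combined with the spatial estimate yields joint $C^{1,\gamma''}$-regularity of the family on $[0,1] \times M$.

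The technical heart of the argument is the linearization step: extracting, from the difference of two nonlinear Euler--Lagrange systems (in which both the exponent $p_t$ and the coefficient $\lambda^\ep$ vary with $t$), a linear uniformly elliptic leading operator whose remainder is genuinely $O(|t_1-t_2|)$. Differentiating $\lambda^\ep = (\ep^2+|DU|^2)^{(p_t-2)/2}$ in $t$ produces logarithmic factors $\log(\ep^2+|DU|^2)$; these stay bounded precisely because of the uniform $C^{1,\gamma'}$-control (and here $\ep > 0$ is essential, since otherwise $\lambda^\ep$ could vanish and destroy uniform ellipticity). Once this bookkeeping is done, Schauder theory applies as a black box and concludes the proof.
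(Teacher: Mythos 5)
The first half of your argument --- uniform $C^{1,\gamma'}$ bounds from Proposition~\ref{prop-unifderiv}, Arzel\`a--Ascoli compactness in $C^{1,\gamma''}$, passage to the limit in the weak Euler--Lagrange formulation, identification of the limit via the uniqueness result, and the subsequence trick to get convergence of the full sequence --- is exactly the paper's proof, and it already establishes everything the proposition asserts and everything the homotopy argument later uses (uniform spatial $C^{1,\gamma''}$ regularity together with continuity of $t\mapsto U^t$ into $C^{1,\gamma''}(M)$). Your second half, which tries to upgrade this to a quantitative H\"older modulus in $t$ by linearizing in the parameter and invoking Schauder theory, goes beyond what is claimed or needed; it also has a gap as written: the Schauder estimate for the divergence-form system satisfied by $V=U^{t_1}-U^{t_2}$ yields $\|V\|_{C^{1,\gamma''}}\leq C\bigl(\|V\|_{L^\infty}+\|F\|_{C^{\gamma''}}+\|g\|_{L^\infty}+\|\phi^{t_1}-\phi^{t_2}\|_{C^{1,\gamma''}}\bigr)$, and you have not shown that $\|V\|_{L^\infty}$ is itself $O(|t_1-t_2|^{\beta})$; that would require a separate stability argument (e.g.\ testing the difference of the two systems with $V$ to get an energy bound of order $|t_1-t_2|$, then interpolating with the uniform $C^{1,\gamma'}$ control). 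Since that quantitative refinement is not part of the statement, the proof of the proposition itself stands.
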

\begin{proof}
Let $t\in [0,1]$ and suppose first that $\{t_n\}_{n=1}^\infty$ is a sequence of points in $[0,1]$ such that $t_n \to t$. Since mappings $U^s$, for $s=t$ and $s=t_n$, correspond to $\tilU^s$ via $\tilU^s=\tilu(U^s)$, we may apply Propositions~\ref{prop-unifsmooth} and~\ref{prop-unifderiv} to $U^s$ by composing them with $\tilu$. Then, the resulting estimates change only by a constant depending on the geometry of $\tilu$.

By Proposition \ref{prop-unifsmooth}, the family $\{U^{t_n}\}_{n=1}^\infty$ is equicontinuous on $M$ and uniformly bounded. Therefore, the normal family argument implies that for a subsequence $\{t_{n_k}\}_{k=1}^{\infty}$ the sequence of mappings $\{U^{t_{n_k}}\}_{k=1}^{\infty}$ converges uniformly in $M$ to a map denoted $F$.

Let us now recall that Proposition \ref{prop-unifderiv} gives an uniform bound for the derivatives of the mappings $\{U^{t_{n_k}}\}_{k=1}^{\infty}$ of the form  $\|DU^t\|_{C^{\gamma'}(M)} \leq C$. By choosing $\gamma'' \leq \gamma$ if necessary, the application of the Arzela-Ascoli theorem shows that one may choose a further subsequence of this family whose derivatives converge in the $C^{\gamma''}$-norm. Hence this subsequence, which we still denote by $(U^{t_{n_k}})$, converges in $C^{1,\gamma''}$-norm and the limiting map remains to be $F$. Since the derivatives of $(U^{t_{n_k}})$ converge to the derivatives of $F$ uniformly in $M$, we may pass to the limit in the weak formulation of the $p_t$-harmonic system \eqref{Ft-system} and see that $F$ solves the same weak system as the map $U^t$. Furthermore, since $F$ and $U^t$ have the same boundary values we must have $F = U^t$ by the uniqueness, Proposition~\ref{sect-exist-uni-Prop4}, applied to the boundary data $\phi^t$ and exponent $p_t$. By the above reasoning, every subsequence of $(U^{t_n})$ must have a subsequence converging to $U^t$. This proves that $U^{t_n} \to U^t$ uniformly in $C^{1,\gamma''}(M)$, as $t_n\to t$.
\end{proof}

\begin{cor}\label{prop-unifnormal}(Uniform boundary estimate). For some $c_1 > 0$ which does not depend on $t$ we have that
\[J_{\tilU^t}(z) \geq c_1 \ \ \text{ for all } z \in \partial M.\]
\end{cor}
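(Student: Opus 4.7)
The plan is to combine the pointwise positivity of the Jacobian along $\partial M$ from Proposition \ref{Prop-2} with the uniform $C^{1,\gamma''}$-convergence of the homotopy family from Proposition \ref{prop-smoothfamily}, and then extract a uniform lower bound by a compactness argument on the compact set $[0,1] \times \partial M$.

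First, I would note that Proposition \ref{Prop-2} applies individually to each $U^t$, $t \in [0,1]$, since $U^t$ is the unique minimizer of the $\epsilon$-perturbed $p_t$-harmonic energy with boundary data $\phi^t$, and $\phi^t$ has nonvanishing tangential derivative by the construction of the boundary homotopy (the speeds $|\tau_t|$ are positive convex combinations of the positive speeds $|\tau_0|$ and $|\tau_1|$). Consequently, $J_{U^t}(z) > 0$ for every $z \in \partial M$ and every $t \in [0,1]$. Since $\tilU^t = \tilu \circ U^t$ and $\tilu$ is a smooth chart with bounded, bounded-below Jacobian on $N$, the same conclusion transfers to $J_{\tilU^t}$: it is pointwise positive on $\partial M$ for each $t$.

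Next, I would upgrade this pointwise positivity to a uniform lower bound via continuity and compactness. The key input is Proposition \ref{prop-smoothfamily}, from which the map
\[
[0,1] \times M \ni (t,z) \longmapsto DU^t(z)
\]
is continuous (indeed, $t \mapsto U^t$ is continuous into $C^{1,\gamma''}(M)$, so evaluation of the derivative is jointly continuous on the compact product). Composing with $\tilu$ and taking the determinant, the function $(t,z) \mapsto J_{\tilU^t}(z)$ is continuous on the compact set $[0,1] \times \partial M$. Suppose for contradiction that $\inf_{(t,z) \in [0,1] \times \partial M} J_{\tilU^t}(z) = 0$. By compactness there exist $(t_0, z_0) \in [0,1] \times \partial M$ with $J_{\tilU^{t_0}}(z_0) = 0$, contradicting the pointwise positivity established in the previous paragraph. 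Therefore the infimum is attained at some positive value $c_1 > 0$, which is the desired uniform bound.

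I do not expect any genuine obstacle here: the statement is essentially a packaging of (i) pointwise boundary positivity, which is already available from Proposition \ref{Prop-2}, together with (ii) joint continuity in $(t,z)$, which is exactly what Proposition \ref{prop-smoothfamily} provides. The only thing to be slightly careful about is to verify that Proposition \ref{Prop-2} is indeed applicable uniformly along the homotopy, i.e., that each $\phi^t$ has nonvanishing tangential derivative and each $U^t$ satisfies the hypotheses (uniqueness, $C^{1,\beta}$-regularity up to the boundary, interior sphere condition on $z(M)$), all of which have been arranged in Section \ref{sect-homotopy} and in the reduction at the end of Section \ref{sect-geometry}.
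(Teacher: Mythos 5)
Your argument is correct and coincides with the paper's own proof: both combine the pointwise positivity of $J_{U^t}$ on $\partial M$ from Proposition \ref{Prop-2} (which the paper notes applies to every $U^t$ along the homotopy) with the joint continuity in $(t,z)$ supplied by Proposition \ref{prop-smoothfamily}, and conclude by compactness of $[0,1]\times\partial M$. Your extra verifications (nonvanishing tangential derivative of each $\phi^t$, transfer of positivity through the chart $\tilu$) are consistent with what the paper arranges in Sections \ref{sect-geometry} and \ref{sect-homotopy}.
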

\begin{proof} By Proposition \ref{prop-smoothfamily}, we know that the family $\{\tilU^t : t \in [0,1]\}$ is $C^{1,\gamma''}$-smooth up to the boundary of $M$. Hence the Jacobians $J_{\tilU^t}$ form a continuous family in $M$. Since $\partial{M}$ is compact and the Jacobian $J_{\tilU^t}$ is positive along the boundary (Proposition \ref{Prop-2}), this implies an uniform lower bound by continuity and compactness.
\end{proof}

We are finally in a position to complete the proof of Theorem~\ref{thm-jacob}.

\begin{proof}[Proof of Theorem~\ref{thm-jacob}]
The smoothness of mappings $\ups$ follows from Theorem~\ref{Thm14-Beck} and Proposition~\ref{PosJacSmoothness}.

In order to show the positivity of Jacobians $J_{\ups}$ in $M$, let us introduce the following set
\[
S = \{t \in [0,1] : J_{\tilU^t} > 0 \text{ everywhere in } M\}.
 \]
We prove that $S$ is both open and closed on the interval $[0,1]$. In conclusion, since $0 \in S$ we must also have that $1 \in S$, proving the assertion that $J_{u^\epsilon} > 0$, cf. the definition of the homotopy $\tilU^t$ above. Therefore, the proof of Theorem \ref{thm-jacob} will be  concluded, provided that we show the following claim.
\smallskip

\noindent \emph{Claim.} There exists a constant $c_2 > 0$ such that if $t \in S$, then in fact $J_{\tilU^t} > c_2$ in $M$.
\smallskip

\noindent \emph{Proof of the claim.} The proof is based on the minimum principle for the expression $T$, see Corollary \ref{MinPrin}. We obtain that
\[\inf_{z \in M} (\epsilon^2 + |D\tilU^t(z)|^2)^{\frac{p_t-2}{2}} J_{\tilU^t}(z) \geq \inf_{z \in \partial M} (\epsilon^2 + |D\tilU^t(z)|^2)^{\frac{p_t-2}{2}} J_{\tilU^t}(z) \geq \epsilon^{p_t - 2} c_1.\]
Proposition \ref{prop-unifderiv} also gives the uniform upper estimate
\[(\epsilon^2 + |D\tilU^t(z)|^2)^{\frac{p_t-2}{2}} \leq C_3 < \infty \ \ \text{ for all } z \in M.\]
Thus we have the uniform estimate $J_{\tilU^t}(z) \geq \epsilon^{p_t - 2} c_1/C_3 > 0$ for all $z \in M$. This proves the claim.
\medskip

\noindent \emph{The set $S$ is open.} Suppose now that $t_0 \in S$. Then $J_{\tilU^{t_0}}(z) \geq c_2$ in $M$. By smoothness,  $J_{\tilU^{t}}(z) \geq c_2/2 > 0$ for $t$ close to $t_0$ and $z \in M$. This proves that $S$ is open.
\medskip

\noindent \emph{The set $S$ is closed.} Suppose that $(t_n) \subset S$, and $t = \lim_{n\to\infty} t_n$. By Proposition \ref{prop-unifderiv}, we infer that
\[J_{\tilU^{t}}(z) = \lim_{n\to\infty} J_{\tilU^{t_{n}}}(z) \geq c_2 > 0,\]
for all $z$ in $M$. Thus $t \in S$ and the set $S$ is closed.
\end{proof}

\section{Convergence results for $\ep$-perturbed $p$-harmonic systems}\label{sect9}

\subsection{The Caccioppoli-type estimates}\label{sect91}

The purpose of this section is to provide various energy estimates for weak solutions of the $\ep$-perturbed $p$-harmonic system. Such estimates are employed in Section~\ref{subsect-conv} to show the $L^p$ convergence of $Du^{\ep}$ to $Du$ and the uniform convergence of $\det Du^{\ep}$
on compacta. We follow the ideas from Lemmas 11.1, 11.2, 11.3 in \cite{iko}, as well as adapting techniques from \cite{fare}.

Since the computations in \cite{fare} are based on the Nash embedding of the target manifold into the Euclidean space, we also prefer to work in this setting here. Hence, let $i : N' \to \rr^k$ denote the isometric Nash embedding of $N'$, and set $\vps := i \circ \ups$. Thus $\vps$ satisfies the system \eqref{v-system-weak} for $\lambda = \lambda^{\ep}$. Here we also identify the space $W^{1,p}_{ex}(M,N')$ with the space of Sobolev maps from $M$ to $\rr^k$ taking values in $i(N')$ almost everywhere, and denote by $W^{1,p}_{ex}(M,N)$ the subspace of maps taking values only in $i(N)$.

\begin{lem}\label{lem91}
  Let $v^{\ep} = i \circ \ups$, where $\ups$ denotes a minimizer of the $\ep$-perturbed $p$-harmonic energy \eqref{ep-energy}, including $\ep=0$, and $i: N' \to \rr^k$ is the Nash embedding. In particular $\vps$ is a weak solution to the system of equations~\eqref{v-system-weak}.
  Let also $v_0\in W^{1,p}_{ex}(M,N)$ be a given Sobolev map with the same trace as $\vps$ on $\partial M$. Then the following Caccioppoli-type estimates hold for $v^{\ep}$ and $v_{0}$:
  \begin{equation}\label{Cac1}
  \int_{M} \left(\ep^2+|\nabla  \vps|^2\right)^{\frac{p-2}{2}} |\nabla\vps|^2\,dV_M \leq c\int_{M} \left(\ep^2+|\nabla v_0|^2\right)^{\frac{p-2}{2}} |\nabla v_0|^2\,dV_M,
 \end{equation}
 provided that the submanifold $N$ in the target satisfies the smallness condition $\diam N \leq \ep_{N',p}$ for $\ep_{N',p}$ sufficiently small. The constant $c$ in \eqref{Cac1} depends only on $p, N',$ and $\ep_{N',p}$.
\end{lem}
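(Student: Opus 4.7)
My plan is to test the weak Euler--Lagrange system \eqref{v-system-weak} for $v^\ep$ against the difference $\psi := v^\ep - v_0$, which lies in $W^{1,p}_0(M,\R^k)$ since $v^\ep$ and $v_0$ share the same trace on $\partial M$. Substitution and rearrangement would produce the identity
\[
\int_M \lambda |\nabla v^\ep|^2 \, dV_M = \int_M \lambda \,\nabla v^\ep \cdot \nabla v_0 \, dV_M - \int_M \lambda \,A'(v^\ep)(\nabla v^\ep, \nabla v^\ep) \cdot (v^\ep - v_0) \, dV_M,
\]
where $\lambda = (\ep^2 + |\nabla v^\ep|^2)^{(p-2)/2}$. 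The entire proof then reduces to absorbing the two right-hand integrals into the left-hand one.

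For the cross term I would apply Cauchy--Schwarz followed by Young's inequality to get $\int_M \lambda \,\nabla v^\ep \cdot \nabla v_0 \leq \tfrac{1}{2}\int_M \lambda |\nabla v^\ep|^2 + \tfrac{1}{2}\int_M \lambda |\nabla v_0|^2$, and absorb the first half. For the curvature term I would combine the pointwise bound $|A'(v^\ep)(\nabla v^\ep, \nabla v^\ep)| \leq C_{N'} |\nabla v^\ep|^2$ recalled in Section \ref{sect31} with the fact that $v^\ep, v_0 \in i(N)$, which (using the isometry of the Nash embedding together with hypothesis (S)) gives $|v^\ep - v_0| \leq \diam_{\R^k}(i(N)) \leq C\,\ep_{N'\!,p}$. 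Together these would yield
\[
\left|\int_M \lambda \,A'(v^\ep)(\nabla v^\ep, \nabla v^\ep) \cdot (v^\ep - v_0) \, dV_M\right| \leq C_{N'}\,\ep_{N'\!,p} \int_M \lambda |\nabla v^\ep|^2 \, dV_M,
\]
so by choosing $\ep_{N'\!,p}$ small enough that $C_{N'}\,\ep_{N'\!,p} \leq 1/4$ this term is also absorbed, leaving the preliminary estimate $\int \lambda |\nabla v^\ep|^2 \leq C\int \lambda |\nabla v_0|^2$.

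It remains to convert $\lambda$ on the right-hand side to $\lambda_0 := (\ep^2 + |\nabla v_0|^2)^{(p-2)/2}$. For $p=2$ this is trivial since $\lambda \equiv 1 \equiv \lambda_0$. For $p > 2$ I would use the elementary bound $\lambda \leq C_p(\ep^{p-2} + |\nabla v^\ep|^{p-2})$. The first piece contributes $\ep^{p-2} |\nabla v_0|^2 \leq \lambda_0 |\nabla v_0|^2$ directly; for the second, Young's inequality with conjugate exponents $p/(p-2)$ and $p/2$ yields $|\nabla v^\ep|^{p-2} |\nabla v_0|^2 \leq \delta |\nabla v^\ep|^p + C(\delta) |\nabla v_0|^p$, and the pointwise identities $|\nabla v^\ep|^p \leq \lambda |\nabla v^\ep|^2$ and $|\nabla v_0|^p \leq \lambda_0 |\nabla v_0|^2$ allow a final absorption of the $|\nabla v^\ep|^p$ contribution into the left-hand side for $\delta$ small enough, while the other term merges into the desired right-hand side.

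The hard part will be the curvature term: its absorption is exactly what forces the smallness condition (S), and this is precisely the reason (S) recurs at several places throughout the paper. Without it the nonhomogeneous term on the right of \eqref{v-system-weak}, arising from the second fundamental form of $N'$, would not be controllable by the left-hand side and the entire testing procedure would yield no comparison at all.
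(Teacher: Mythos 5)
Your proposal is correct and follows essentially the same route as the paper: test \eqref{v-system-weak} with $\psi=v^\ep-v_0$, absorb the second-fundamental-form term using $|A'(\vps)(\nabla\vps,\nabla\vps)|\le C_{N'}|\nabla\vps|^2$ together with $|\vps-v_0|\le\diam i(N)\le\ep_{N'\!,p}$, and absorb half of the cross term. The only difference is cosmetic: where you convert the weight $(\ep^2+|\nabla\vps|^2)^{(p-2)/2}$ on the right-hand side to $(\ep^2+|\nabla v_0|^2)^{(p-2)/2}$ by a second application of Young's inequality and a further absorption, the paper invokes a single pointwise inequality (formula (11.5) of \cite{iko}) that accomplishes the same conversion in one step.
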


\begin{proof}
 Recall from the maximum principles in Section \ref{sect-geometry} that the maps $\vps$ are already known to take values inside $i(N)$. Let $v_0\in W^{1,p}_{ex}(M,N)$ be a given map and let us define a test mapping $\phi:= \vps - v_0$ with zero trace in $W^{1,p}_{ex}(M,N)$. We use $\phi$ and $\lambda^{\ep}:=\left(\ep^2+|\nabla \vps|^2\right)^{\frac{p-2}{2}}$ at \eqref{v-system-weak} and arrive at the following estimate:
 \begin{equation*}
 - \int_{M} \lambda^{\ep} \left(|\nabla \vps|^2 - \nabla \vps \cdot \nabla v_{0} \right) \,dV_M =\int_{M} \lambda^{\ep} A'(\vps)(\nabla \vps,\nabla \vps)\cdot(\vps- v_0)\,dV_M.
 \end{equation*}
 From this we get
 \begin{equation*}
  \int_{M} \lambda^{\ep} |\nabla \vps|^2 \,dV_M \leq \int_{M} \lambda^{\ep} |A'(\vps)(\nabla \vps,\nabla \vps)| |\vps-v_{0}|\,dV_M+
  \int_{M} \lambda^{\ep} |\nabla \vps||\nabla v_{0}|.
 \end{equation*}
 In order to estimate the first term on the right-hand side of this inequality, we recall that the second fundamental form $A'(\vps)$ is defined as the bilinear form, and hence,  $|A'(\vps)(\nabla \vps,\nabla \vps)|\leq C_{N'}|\nabla \vps|^2$. The smallness assumption on the set $N$ guarantees that $\dist(\vps,v_0) \leq \diam\, i(N) = \diam N \leq \ep_{N',p}$, implying the estimate $|\vps - v_{0}| \leq \ep_{N',p}$.
Combined with the inequality \cite[(11.5)]{iko} to treat the second term above, we obtain as a consequence that
 \begin{equation}\label{Cac1-aux2}
  \int_{M} \lambda^{\ep} |\nabla \vps|^2 \,dV_M \leq C_{N'} \ep_{N',p} \int_{M} \lambda^{\ep} |\nabla \vps|^2 dV_M+
  \frac12\int_{M} \lambda^{\ep} |\nabla \vps|^2\,dV_M+2^p\int_{M} \left(\ep^2+|\nabla v_0|^2\right)^{\frac{p-2}{2}} |\nabla v_0|^2\,dV_M.
 \end{equation}
Upon rearranging, we find that \eqref{Cac1-aux2} takes the following form:
 \begin{equation*}
  \int_{M} \lambda^{\ep} |\nabla \vps|^2 \left(\frac12-C_{N'}\ep_{N',p}\right)\,dV_M \leq 2^p\int_{M} \left(\ep^2+|\nabla v_0|^2\right)^{\frac{p-2}{2}} |\nabla v_0|^2\,dV_M.
 \end{equation*}
 From this, assertion \eqref{Cac1} follows immediately with $c:=\frac{2^p}{\frac12-C_{N'}\ep_{N',p}}$, provided $\ep_{N',p}$ is sufficiently small.

 By taking $\phi:= \vps - i \circ u^0$ with zero trace in $W^{1,p}_{ex}(M,N)$, where $u^0$ denotes the $p$-harmonic map $u^{\ep}$ for $\ep=0$, we obtain in particular the assertion of Lemma \ref{lem91} with $v^0 = i \circ u^0$ instead of $v_0$ on the right-hand side of \eqref{Cac1}. Note that here we use the fact that the maps $v^0$ and $\vps$ for $\ep > 0$ have the same trace on $\partial M$.
 \end{proof}

 Since for any map $u_1\in W^{1,p}(M,N)$ the norm equality $|D u_1| = |\nabla (i \circ u_1)|$ holds, we obtain the following corollary as a consequence of Lemma \ref{lem91}.

 \begin{cor}\label{cor91}
  Let $\ups$, $\ep \geq 0$, denote a minimizer of the $\ep$-perturbed $p$-harmonic energy as before, and $u_0\in W^{1,p}(M,N)$ be a given Sobolev map with the same trace as $\ups$ on $\partial M$. Then we have the Caccioppoli-type inequality:
  \begin{equation}\label{CacU-1}
  \int_{M} \left(\ep^2+|D  \ups|^2\right)^{\frac{p-2}{2}} |D\ups|^2\,dV_M \leq c\int_{M} \left(\ep^2+|D u_0|^2\right)^{\frac{p-2}{2}} |D u_0|^2\,dV_M,
 \end{equation}
 The constant $c$ here depends only on $p, N',$ and $\ep_{N',p}$, and we again assume the smallness condition \eqref{cond-small} on $N$.
\end{cor}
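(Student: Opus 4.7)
The plan is to reduce the statement to a direct application of Lemma \ref{lem91} via the Nash embedding $i:N'\to\mathbb{R}^k$. Given $u_0\in W^{1,p}(M,N)$ with the same trace as $\ups$ on $\partial M$, the natural candidate for the comparison map in Lemma \ref{lem91} is $v_0:=i\circ u_0$. First I would check that this $v_0$ lies in $W^{1,p}_{ex}(M,N)$: this follows from the equivalence of the intrinsic and extrinsic Sobolev spaces for maps taking values in a single coordinate chart, which was recalled in Section \ref{sect-map} (Theorem 2 in Pigola--Veronelli). Next I would note that $v_0$ shares its trace with $\vps=i\circ\ups$ on $\partial M$, since $i$ is continuous and the boundary traces of $u_0$ and $\ups$ coincide.

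With $v_0$ in place, Lemma \ref{lem91} delivers the desired estimate but expressed in terms of the extrinsic gradients $\nabla\vps$ and $\nabla v_0$. The crucial observation now is the pointwise norm identity
\[
|Du_1|=|\nabla(i\circ u_1)|
\]
valid for any $u_1\in W^{1,p}(M,N)$. This identity was already derived in Section \ref{sect31}: starting from the pullback computation $u^{*}(\rho)(X,Y)=dv(X)\cdot dv(Y)$ where $v=i\circ u_1$, one obtains that the trace (with respect to the metric on $M$) of the pullback of the target metric coincides with the Hilbert--Schmidt norm squared of $\nabla v$, hence $|Du_1|^2=|\nabla v|^2$. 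Applying this identity to both $u_1=\ups$ and $u_1=u_0$ converts the $\nabla$-norms in Lemma \ref{lem91} into the intrinsic $D$-norms, which immediately yields \eqref{CacU-1} with the same constant $c$.

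There is no substantive obstacle in this argument; the entire content of the corollary is the Nash-embedding identification together with the already-established identity $|Du|=|\nabla(i\circ u)|$. The smallness hypothesis on $\diam N$ is inherited from Lemma \ref{lem91}, since it is precisely the condition needed there to absorb the second fundamental form term $A'(\vps)(\nabla\vps,\nabla\vps)\cdot(\vps-v_0)$ on the right-hand side of the Euler--Lagrange equation \eqref{v-system-weak} into the left-hand side.
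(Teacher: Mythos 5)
Your proposal is correct and follows exactly the paper's route: the paper deduces Corollary \ref{cor91} from Lemma \ref{lem91} precisely via the norm identity $|Du_1|=|\nabla(i\circ u_1)|$ established in Section \ref{sect31}, applied with $v_0=i\circ u_0$. Your additional verifications (membership of $i\circ u_0$ in $W^{1,p}_{ex}(M,N)$ and the matching of traces) are sound and merely make explicit what the paper leaves implicit.
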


\subsection{Convergence of $\nabla \vps$ and the Jacobian $J_{\vps}$}\label{subsect-conv}

 In this section we employ some of the energy estimates presented above to show two convergence lemmas. First, in Lemma~\ref{lem92} we discuss the $L^p$ convergence on $M$ for the differentials $\nabla v$ of weak mappings solving the $\ep$-perturbed $p$-harmonic systems \eqref{v-system-weak} and prove that $\nabla v$ can be $L^p$-approximated globally by $\nabla \vps$. Then we show the $W^{1,2}$-Sobolev regularity for an auxiliary differential expression, see Lemma~\ref{lem93}. By combining results of this section together with the Gehring-type reverse H\"older inequality we arrive at one of the key-results of this note, namely, at Corollary~\ref{cor-det-conv}. It says that on compact subsets of $M$ we have the uniform convergence:
$$
 J_{\vps}\to J_v,\qquad \hbox{for } \ep\to 0.
$$

We use this observation to complete the proof of Theorem~\ref{MainThm} in Section~\ref{Sec10}.

\begin{lem}\label{lem92}
 Under the above notation and definitions, let $v$ (defined as $v = i \circ u$ where $u = u^0$) and $\vps$ be solutions to the boundary value problem with the same trace $v_0\in W^{1,p}_{ex}(M,N)$. Then it holds that
 \begin{equation*}
  \|\nabla \vps - \nabla v\|_{L^p(M)}\to 0,\quad\hbox{ as }\ep\to 0.
 \end{equation*}
\end{lem}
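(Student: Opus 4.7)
The plan is to combine the uniform Caccioppoli-type bound of Corollary~\ref{cor91} with a $\Gamma$-convergence-style argument: first establish weak $W^{1,p}$-convergence $v^{\ep}\rightharpoonup v$ along subsequences, then upgrade this to convergence of the $p$-harmonic energies $\mathbb{E}(v^{\ep})\to \mathbb{E}(v)$, and finally invoke uniform convexity of $L^p$ (Radon--Riesz) to conclude strong convergence of the gradients.

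First I would apply Corollary~\ref{cor91} with $u_0=u$ to get $\int_M (\ep^2+|\nabla v^{\ep}|^2)^{(p-2)/2}|\nabla v^{\ep}|^2\,dV_M \leq c\int_M (1+|\nabla v|^2)^{p/2}\,dV_M$ uniformly in $\ep\in(0,1)$. Since $p\geq 2$, the pointwise inequality $(\ep^2+|\nabla v^{\ep}|^2)^{(p-2)/2}|\nabla v^{\ep}|^2 \geq |\nabla v^{\ep}|^p$ yields a uniform $W^{1,p}$-bound on the family $\{v^{\ep}\}$. Therefore any sequence $\ep_n\to 0$ admits a subsequence (still denoted $\ep_n$) along which $v^{\ep_n}\rightharpoonup w$ weakly in $W^{1,p}(M,\R^k)$, with $w$ having the same trace $v_0$ on $\partial M$; by passing to a further subsequence, convergence holds pointwise a.e., and the maximum principle (Proposition~\ref{weakmaxprin}) applied to each $v^{\ep_n}$ forces $w\in W^{1,p}_{ex}(M,N)$.

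Next I would identify $w$ with $v$. Lower semicontinuity of $\mathbb{E}$ with respect to weak $W^{1,p}$-convergence gives $\mathbb{E}(w)\leq \liminf_n \mathbb{E}(v^{\ep_n})$. On the other hand, since $v^{\ep_n}$ minimizes $\mathbb{E}_{\ep_n}$ among maps with boundary trace $v_0$, and since $v$ itself is an admissible competitor,
\begin{equation*}
\mathbb{E}(v^{\ep_n})\leq \mathbb{E}_{\ep_n}(v^{\ep_n}) \leq \mathbb{E}_{\ep_n}(v).
\end{equation*}
Dominated convergence (with majorant $(1+|\nabla v|^2)^{p/2}$) yields $\mathbb{E}_{\ep_n}(v)\to \mathbb{E}(v)$, so $\mathbb{E}(w)\leq \mathbb{E}(v)$. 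By the uniqueness of the $p$-harmonic minimizer under the smallness condition (Proposition~\ref{sect-exist-uni-Prop4} and Appendix A), $w=v$. Because every subsequence admits a further subsequence with this same limit, the full family satisfies $v^{\ep}\rightharpoonup v$ weakly in $W^{1,p}(M,\R^k)$, and in particular $\nabla v^{\ep}\rightharpoonup \nabla v$ weakly in $L^p(M,\R^{k\times 2})$.

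To upgrade to strong convergence I would sandwich the energies:
\begin{equation*}
\mathbb{E}(v)\leq \liminf_{\ep\to 0}\mathbb{E}(v^{\ep})\leq \limsup_{\ep\to 0}\mathbb{E}(v^{\ep})\leq \limsup_{\ep\to 0}\mathbb{E}_{\ep}(v)=\mathbb{E}(v),
\end{equation*}
so $\|\nabla v^{\ep}\|_{L^p(M)}\to \|\nabla v\|_{L^p(M)}$. Since $L^p$ is uniformly convex for $1<p<\infty$, weak convergence combined with norm convergence implies strong convergence, i.e.\ $\|\nabla v^{\ep}-\nabla v\|_{L^p(M)}\to 0$. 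The main obstacle is the uniqueness step: one must verify that the weak limit $w$ is genuinely an admissible competitor for the uniqueness theorem, which is exactly what the a.e.\ pointwise convergence together with the maximum principle (forcing $w(M)\subset N$, hence into a single geodesic ball satisfying the smallness condition \eqref{cond-small}) is designed to guarantee.
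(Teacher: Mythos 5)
Your argument is correct, but it takes a genuinely different route from the paper. The paper's proof is a direct, quantitative monotonicity computation: it inserts the test map $\phi=\vps-v$ into the weak formulations of the two Euler--Lagrange systems, subtracts, and then controls the resulting curvature terms via the second-fundamental-form estimate \eqref{app-A-est} of Fardoun--Regbaoui together with the Caccioppoli-type bound of Proposition~\ref{app-prop}, absorbing the dangerous terms thanks to the smallness of the target and finishing with dominated convergence applied to integrals involving $\nabla v$ only. Your proof is instead a soft compactness argument: uniform energy bounds from Corollary~\ref{cor91}, weak subsequential limits, identification of the limit with $v$ through lower semicontinuity, minimality and uniqueness (Proposition~\ref{sect-exist-uni-Prop4}, Theorem~\ref{app-thm-uniq}), and the Radon--Riesz upgrade from weak-plus-norm convergence to strong convergence. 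Note that both routes ultimately rest on the smallness condition \eqref{cond-small}: the paper uses it to absorb the curvature terms directly, while you invoke it through the uniqueness of the limiting minimizer, whose proof in Appendix A is a monotonicity argument of exactly the same flavour --- so the hard analysis is relocated rather than avoided. What your approach buys is brevity and robustness (it works verbatim for any uniformly coercive, weakly lower semicontinuous family of energies with unique minimizers); what the paper's approach buys is the stronger, quantitative conclusion that the natural distance $I_\ep=\int_M(\ep^2+|\nabla\vps|^2+|\nabla v|^2)^{(p-2)/2}|\nabla\vps-\nabla v|^2\,dV_M$ tends to zero, which is what is actually reused in the subsequent convergence results. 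Two small points would make your write-up airtight: the a.e.\ limit $w$ a priori takes values only in the closure of the small geodesic ball, so one should enlarge the radius slightly before applying the uniqueness theorem; and the Radon--Riesz step should be stated in the vector-valued space $L^p(M,\R^{k\times 2})$ with the $\sigma$-weighted Hilbert--Schmidt norm, which is still uniformly convex.
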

\begin{proof}
 For a given $\ep\geq 0$ let us take the following test function $\phi:=\vps - v$. We may apply $\phi$ in the weak equation \eqref{v-system-weak} by the density of $C_{0}^{\infty}(M, \rr^k)$ in $\Sob(M, \rr^k)$ and by the fact that both $\vps$ and $v$ have the same boundary data $v_0$. Therefore, we use $\phi$ in \eqref{v-system-weak} for both of the maps $v$ and $\vps$ and subtract the two equations from each other (as in the proof of \cite[Lemma 11.2]{iko}), to obtain:
 \begin{equation}\label{lem-conv0}
 - \int_{M} \left(\lambda^{\ep} \nabla\vps \cdot \nabla \phi -  \lambda \nabla v \cdot \nabla \phi \right)\,dV_M =\int_{M} (\lambda^{\ep} A'(\vps)(\nabla \vps,\nabla \vps)-\lambda  A'(v)(\nabla v, \nabla v))\cdot \phi  \,dV_M.
 \end{equation}

 We follow the steps of the proof for Lemma 11.2 in \cite{iko} and obtain the estimate similar to (11.8) in \cite{iko}:
 \begin{align}
  \|\nabla \vps - \nabla v\|^p_{_{L^p(M)}} & \leq\! 2^{\frac{p-1}{2}}\!\left[\int_{M} \left(\ep^2+|\nabla \vps|^2+|\nabla v|^2\right)^{\frac{p}{2}}\right]^{\frac12}\! \left[\int_{M} \left(\ep^2+|\nabla \vps|^2+|\nabla v|^2\right)^{\frac{p-2}{2}}\,\left|\nabla \ups-\nabla v\right|^2\right]^{\frac12}.  \label{lem-conv1}
 \end{align}

Let us estimate the first factor on the right-hand side of \eqref{lem-conv1}.
\begin{equation}
\int_{M} \left(\ep^2+|\nabla \vps|^2+|\nabla v|^2\right)^{\frac{p}{2}}\leq 2^{\frac{p}{2}}
\left(\int_{M} \left(\ep^2+|\nabla \vps|^2\right)^{\frac{p}{2}}+\int_{M} |\nabla v|^p \right).
\label{lem-1st-rhs}
\end{equation}
Since by assumptions $p>2$, then $\frac{p}{p-2}>1$ and by the $\delta$-Young inequality and \eqref{Cac1} applied above we obtain that
\begin{align}
\int_{M}\!\!\left(\ep^2+|\nabla \vps|^2\right)^{\frac{p}{2}}\!dV_M&=\ep^2\!\int_{M} \left(\ep^2+|\nabla \vps|^2\right)^{\frac{p-2}{2}}\,dV_M+\int_{M} \left(\ep^2+|\nabla \vps|^2\right)^{\frac{p-2}{2}} |\nabla \vps|^2\,dV_M \label{lem-conv2} \\
&\leq\! \int_{M}\! \left\{\left(\ep^2+|\nabla \vps|^2\right)^{\frac{p}{2}}\delta^\frac{p}{p-2}+\frac{\ep^p}{\delta^{\frac{p}{2}}}\right\}\!dV_M+ c(p, C_N, \ep_{N',p})\!\int_{M} \left(\ep^2+|\nabla v_0|^2\right)^{\frac{p-2}{2}} |\nabla v_0|^2\,dV_M. \nonumber
\end{align}
We choose $\delta=2^{\frac{2-p}{p}}$ and include the first of the above integrals on the right-hand side into the left-hand side of \eqref{lem-conv2}. As $\ep\leq 1$, we obtain that
\begin{align*}
\int_{M} \left(\ep^2+|\nabla \vps|^2\right)^{\frac{p}{2}}\,dV_M
 \leq 2^{\frac{p}{2}}\ep^p|{\rm vol} M|+ c(p, C_N, \ep_{N',p})\int_{M} \left(\ep^2+|\nabla v_0|^2\right)^{\frac{p}{2}}\,dV_M  \leq C\int_{M} (1+|\nabla v_0|^{2})^{\frac{p}{2}}.
\end{align*}
The constant $C$ depends on $2^{\frac{p}{2}}$, ${\rm vol} M$ and $c(p, C_{N'}, \ep_{N',p})$. In summary, $C$ depends only on $p$ and the geometry of surfaces $M$ and $N'$. This completes the estimate of \eqref{lem-1st-rhs}.

The estimate of the crucial second factor in \eqref{lem-conv1} is more tedious and difficult. By the standard $p$-harmonic estimate, see e.g. (11.7) in \cite{iko}, the definition of the inner product and \eqref{lem-conv0}, we have
\begin{align}
&\int_{M} \left(\ep^2+|\nabla \vps|^2+|\nabla v|^2\right)^{\frac{p-2}{2}}\,\left|\nabla \vps-\nabla v\right|^2\,dV_M \nonumber \\
&\leq c(p) \int_{M} \left \langle \left(\ep^2+|\nabla \vps|^2\right)^{\frac{p-2}{2}}\nabla \vps-\left(\ep^2+|\nabla v|^2\right)^{\frac{p-2}{2}}\nabla v, \nabla \vps-\nabla v\right \rangle \,dV_M\label{lemm92-ineq1} \\
& = c(p) \left(- \int_{M} (\lambda^{\ep} A'(\vps)(\nabla \vps,\nabla \vps)-\lambda  A'(v)(\nabla v,\nabla v))\cdot (\vps-v) \,dV_M  \nonumber \right.\\
& \left.\phantom{AA}+ \int_{M} \left \langle |\nabla v|^{p-2}\nabla v-\left(\ep^2+|\nabla v|^2\right)^{\frac{p-2}{2}}\nabla v, \nabla \vps-\nabla v\right \rangle \,dV_M \right) \nonumber \\
& \leq c(p) \left(- \int_{M} \left(\left(\ep^2+|\nabla \vps|^2\right)^{\frac{p-2}{2}} A'(\vps)(\nabla \vps,\nabla \vps)-|\nabla v|^{p-2} A'(v)(\nabla v,\nabla v)\right)\cdot (\vps-v) \,dV_M \right. \label{lem92-aux1}  \\
&\left.\phantom{AA}+ \int_{M} \left||\nabla v|^{p-2}-\left(\ep^2+|\nabla v|^2\right)^{\frac{p-2}{2}}\right||\nabla v||\nabla \vps-\nabla v| \,dV_M \right). \label{lem92-aux2}
\end{align}

{\bf Estimates for integral \eqref{lem92-aux1}:}

According to Lemma 2.2 in \cite{fare} the following estimate holds for any point $y,z\in \rr^k$ and any vectors $Y,Z\in \rr^k$ with a constant $C=C(k)$:
\begin{equation}\label{app-A-est}
|A'(y)(Y,Y)-A'(z)(Z,Z)|\leq C(|Y|^2+|Z|^2)|y-z|+C(|Y|+|Z|)|Y-Z|.
\end{equation}

Let us apply observation \eqref{app-A-est} for
\[
y=\vps,\quad z=v, \quad Y=\left(\ep^2+|\nabla \vps|^2\right)^{\frac{p-2}{4}}\nabla \vps, \quad Z=|\nabla v|^{\frac{p-2}{2}}\nabla v.
\]
This, together with the bilinearity of the second fundamental form $A'$ allow us to estimate from the above the integral in \eqref{lem92-aux1} by the following expression:
\begin{align}
&\int_{M} \left|A'(\vps)\left(\left(\ep^2+|\nabla \vps|^2\right)^{\frac{p-2}{4}}\nabla \vps,\left(\ep^2+|\nabla \vps|^2\right)^{\frac{p-2}{4}}\nabla \vps\right)- A'(v)\left(|\nabla v|^{\frac{p-2}{2}}\nabla v,|\nabla v|^{\frac{p-2}{2}}\nabla v\right)\right|\,|\vps-v|\,dV_M \nonumber \\
&\leq C\int_{M}\left(\left(\ep^2+|\nabla \vps|^2\right)^{\frac{p-2}{2}}|\nabla \vps|^2+ |\nabla v|^p \right)|\vps-v|^2\,dV_M \label{lem92-int3}\\
&\phantom{AA}+C\int_{M}\left(\left(\ep^2+|\nabla \vps|^2\right)^{\frac{p-2}{4}}|\nabla \vps|+|\nabla v|^{\frac{p}{2}}\right)
\left|\left(\ep^2+|\nabla \vps|^2\right)^{\frac{p-2}{4}}\nabla \vps-|\nabla v|^{\frac{p-2}{2}}\nabla v\right||\vps-v|\,dV_M.
\label{lem92-int4}
\end{align}

In order to estimate the above two integrals we now need to apply Proposition \ref{app-prop} from the appendix. We apply the proposition twice by choosing $\epsilon$ in the proposition to be $\epsilon>0$ and zero, respectively. Moreover, we choose as our test function $\eta = \vps - v$ and thus obtain the estimate
\begin{equation*}
  \int_{M} \left((\ep^2+|\nabla \vps|^2)^{\frac{p-2}{2}}|\nabla \vps|^2 + |\nabla v|^p\right)|\vps - v|^2\,dV_M \leq 16 r^2 \int_{M} \left((\ep^2+|\nabla \vps|^2)^{\frac{p-2}{2}} + |\nabla v|^{\frac{p-2}{2}}\right)|\nabla \vps - \nabla v|^2\,dV_M.
 \end{equation*}
We apply this directly to \eqref{lem92-int3}, while for \eqref{lem92-int4} we use the Cauchy-Schwartz inequality and the elementary inequalities $(x+y)^2 \leq 2(x^2 + y^2)$ and $2xy \leq x^2 + y^2$ to obtain
\begin{align}
 & \int_{M}\left(\left(\ep^2+|\nabla \vps|^2\right)^{\frac{p-2}{4}}|\nabla \vps| + |\nabla v|^{\frac{p}{2}}\right)
\left|\left(\ep^2+|\nabla \vps|^2\right)^{\frac{p-2}{4}}\nabla \vps-|\nabla v|^{\frac{p-2}{2}}\nabla v\right||\vps-v|\,dV_M
\nonumber\\&\leq\! 4\sqrt{2}r \Bigg(\!\int_{M}\bigg|\left(\ep^2+|\nabla \vps|^2\right)^{\!\frac{p-2}{4}}\nabla \vps-|\nabla v|^{\frac{p-2}{2}}\nabla v\bigg|^2\!dV_M\!\Bigg)^{\!\frac12}\!\!\left(\int_{M} \left((\ep^2+|\nabla \vps|^2)^{\!\frac{p-2}{2}}+ |\nabla v|^{\frac{p-2}{2}}\right)|\nabla \vps - \nabla v|^2\!dV_M\!\right)^{\!\frac12}\nonumber\\&\leq\! 2\sqrt{2} r\! \int_{M}\left|\left(\ep^2+|\nabla \vps|^2\right)^{\!\frac{p-2}{4}}\nabla \vps-|\nabla v|^{\frac{p-2}{2}}\nabla v\right|^2\!dV_M + 2\sqrt{2} r\! \int_{M}\! \left((\ep^2+|\nabla \vps|^2)^{\frac{p-2}{2}}+ |\nabla v|^{\frac{p-2}{2}}\right)|\nabla \vps - \nabla v|^2\,dV_M.
\label{lem92someEstim}
 \end{align}
We now estimate the first term in the above expression \eqref{lem92someEstim}. By the elementary inequality
\[\left|(\ep^2 + |x|^2)^{\frac{p-2}{4}}x - (\ep^2 + |y|^2)^{\frac{p-2}{4}}y\right| \leq C_p (\ep^2 + |x|^2 + |y|^2)^{\frac{p-2}{4}} |x-y|, \qquad \text{ for } x,y \in \rr^k,\]
which holds for a constant $C_p$ independent on $\epsilon \in [0,1)$, we find that
\begin{align*}
\int_{M}\left|\left(\ep^2+|\nabla \vps|^2\right)^{\frac{p-2}{4}}\nabla \vps-|\nabla v|^{\frac{p-2}{2}}\nabla v\right|^2\,dV_M
&\leq C \int_{M} \left(\ep^2+|\nabla \vps|^2+ |\nabla v|^2\right)^{\frac{p-2}{2}}|\nabla \vps - \nabla v|^2\,dV_M
\\& \quad + C \int_{M} \left|\left(\ep^2+|\nabla v|^2\right)^{\frac{p-2}{4}}\nabla v-|\nabla v|^{\frac{p-2}{2}}\nabla v\right|^2\,dV_M.
\end{align*}
We now combine the above estimates for terms \eqref{lem92-aux1} and \eqref{lem92-aux2}. For small enough $r$ we hence have the following estimate for the integral \eqref{lem92-aux1}
\begin{align}
&- \int_{M} \left(\left(\ep^2+|\nabla \vps|^2\right)^{\frac{p-2}{2}} A'(\vps)(\nabla \vps,\nabla \vps)-|\nabla v|^{p-2} A'(v)(\nabla v,\nabla v)\right)\cdot (\vps-v) \,dV_M \nonumber \\& \leq \frac{1}{2} \int_{M} \left(\ep^2+|\nabla \vps|^2+|\nabla v|^2\right)^{\frac{p-2}{2}}\,\left|\nabla \vps-\nabla v\right|^2\,dV_M + \frac{1}{2} \int_{M} \left|\left(\ep^2+|\nabla v|^2\right)^{\frac{p-2}{4}}\nabla v-|\nabla v|^{\frac{p-2}{2}}\nabla v\right|^2\,dV_M.
\label{lem92-48estim}
\end{align}
{\bf Estimates for integral \eqref{lem92-aux2}:}
The only estimate we need here is what we get from an application of H\"older's inequality
\begin{align}
&\int_{M} \left||\nabla v|^{p-2}-\left(\ep^2+|\nabla v|^2\right)^{\frac{p-2}{2}}\right||\nabla v||\nabla \vps-\nabla v| \,dV_M
\nonumber \\&\leq \| \nabla \vps - \nabla v \|_{L^p(M)} \left(\int_M \left||\nabla v|^{p-2}-\left(\ep^2+|\nabla v|^2\right)^{\frac{p-2}{2}}\right|^{\frac{p}{p-1}}|\nabla v|^{\frac{p}{p-1}}\, dV_M\right)^{\frac{p-1}{p}}. \label{lem92-49estim}
\end{align}
Set $I_{\ep}:= \int_{M} \left(\ep^2+|\nabla \vps|^2+|\nabla v|^2\right)^{\frac{p-2}{2}}\,\left|\nabla \vps-\nabla v\right|^2\,dV_M$. By \eqref{lem-conv1} we know that $\| \nabla \vps - \nabla v \|_{L^p(M)}^p \leq C I_{\ep}$, where $C$ is independent of $\ep$ by the discussion following \eqref{lem-1st-rhs}. Hence, it will be enough to show that $I_{\ep} \to 0$ as $\ep \to 0$. Combining estimates \eqref{lem92-48estim} and \eqref{lem92-49estim} gives together with the estimates starting at \eqref{lemm92-ineq1} that
\begin{align}I_{\ep} &\leq \frac12 I_{\ep} + I_{\ep}^{\frac{1}{p}}  \left(\int_M \left||\nabla v|^{p-2}-\left(\ep^2+|\nabla v|^2\right)^{\frac{p-2}{2}}\right|^{\frac{p}{p-1}}|\nabla v|^{\frac{p}{p-1}}\, dV_M\right)^{\frac{p-1}{p}}\nonumber\\&\quad + \frac{1}{2} \int_{M} \left|\left(\ep^2+|\nabla v|^2\right)^{\frac{p-2}{4}}\nabla v-|\nabla v|^{\frac{p-2}{2}}\nabla v\right|^2\,dV_M. \label{Iep_estimate}\end{align}
By the Lebesgue Dominated Convergence theorem we have that
\begin{align*}
\int_M \left||\nabla v|^{p-2}-\left(\ep^2+|\nabla v|^2\right)^{\frac{p-2}{2}}\right|^{\frac{p}{p-1}}|\nabla v|^{\frac{p}{p-1}}\, dV_M &\to \ 0 \quad \text{ and }\\
\int_{M} \left|\left(\ep^2+|\nabla v|^2\right)^{\frac{p-2}{4}}\nabla v-|\nabla v|^{\frac{p-2}{2}}\nabla v\right|^2\,dV_M &\to\  0
\end{align*}
as $\ep \to 0$. Therefore, \eqref{Iep_estimate} implies that $I_{\ep}\to 0$, as $\ep \to 0$. This proves the claim of the lemma.
\end{proof}

As a result of Lemma \ref{lem92} we now know that the differentials $\nabla \vps$ converge to $\nabla v$ in $L^p$-norm. As a consequence, we also know that the differentials $D \ups$ must converge to $Du$ in $L^p$ norm. Our next aim will be to strengthen this to local uniform convergence. For this we will not use the Nash embedding interpretation for the derivatives. Instead, we recall that the mappings $u^\ep$ arise from solutions of the following system of equations (cf. \eqref{system} and the presentation following it):
\begin{equation*}
 \delta \left(\left(\ep^2+|\dups|^2\right)^{\frac{p-2}{2}}Du^{\ep}\right)=0,
\end{equation*}
where $|\dups|^2=|\utet|^2+|\utetb|^2$, cf. \eqref{eq-Du-norm} and \eqref{def-lam}. We use here this notation in order to appeal to Hamburger~\cite{ham}. Then, we have the following result.

\begin{lem}\label{lem93}
 Let $\ep\geq 0$. Denote by
 \[
  V_{\ep}:=\left(\ep^2+|\dups|^2\right)^{\frac{p-2}{4}}\dups.
 \]
 Then $V_{\ep}\in W^{1,2}(B_R,\Lambda_1)$ for any (geodesic) ball $B_R\Subset M$ and
 \begin{equation}\label{lem93-ineq1}
  \int_{B_{\frac{R}{2}}}|DV_{\ep}|^2\,dV_M\leq \frac{c}{R^2}\int_{B_R}|V_{\ep}-(V_{\ep})_{B_R}|^2\,dV_M+c\int_{B_R} (\ep^2+|\dups|^2)^{\frac{p}{2}}\,dV_M.
  \end{equation}
  Moreover, for any compact set $K\subset \Om$ and for all $\ep\geq 0$ it holds that
  \begin{equation}\label{lem93-uni-est}
   \|DV_{\ep}\|_{L^2(K)}\leq C(p, K, M, \|Du_0\|_{L^p(M)}),
  \end{equation}
  where $u_0$ is a given Sobolev map such that $u^{\ep}-u_0\in \Sobzero(M, \C)$.
\end{lem}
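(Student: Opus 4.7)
\textbf{Proof plan for Lemma \ref{lem93}.} The strategy is the classical Caccioppoli/difference-quotient argument for $p$-harmonic type systems, adapted to the Riemannian setting by working in isothermal coordinates. Since the metric $\sigma$ on $M$ is bounded above and below, on any (geodesic) ball $B_R \Subset M$ we may identify $B_R$ with a subset of $\C$ via the coordinate map $z$ and treat the Euler--Lagrange system \eqref{u-system2} as a system in the plane with bounded smooth coefficients. The nonhomogeneity $A(u^\ep)(Du^\ep, Du^\ep)$ on the right-hand side is quadratic in $Du^\ep$, and $u^\ep$ itself is uniformly bounded since its image lies in the small set $N$ (by Proposition \ref{weakmaxprin} and \eqref{cond-small}); these facts make the curvature contribution a lower-order perturbation to the standard Euclidean argument.

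\textbf{Step 1 (Difference quotients).} Let $\eta \in C_0^\infty(B_R)$ be a standard cut-off with $\eta \equiv 1$ on $B_{R/2}$, $0 \leq \eta \leq 1$ and $|\nabla \eta| \leq C/R$. For each coordinate direction $e_i$ and small $|h|>0$, the test function $\psi_h := \Delta_{-h}\bigl(\eta^2\, \Delta_h(u^\ep - (V_\ep)_{B_R} \cdot z)\bigr)$ (one may equivalently subtract any affine map) has compact support in $B_R$ and can be inserted in the weak formulation \eqref{u-system-weak} with $\lambda = \lambda^\ep$. A discrete integration by parts transfers $\Delta_h$ to the coefficient term, giving
\begin{align*}
\int_{B_R} \langle \Delta_h(\lambda^\ep Du^\ep),\, D(\eta^2 \Delta_h u^\ep)\rangle\, dV_M
 = -\int_{B_R} \Delta_h\bigl(\lambda^\ep A(u^\ep)(Du^\ep, Du^\ep)\bigr)\cdot \eta^2\, \Delta_h u^\ep\, dV_M.
\end{align*}

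\textbf{Step 2 (Algebraic monotonicity).} The central ingredient is the classical pointwise inequality, valid for $p\geq 2$,
\begin{equation*}
\bigl\langle (\epsilon^2+|P|^2)^{(p-2)/2}P - (\epsilon^2+|Q|^2)^{(p-2)/2}Q,\; P-Q\bigr\rangle \;\gtrsim_p\; \bigl|(\epsilon^2+|P|^2)^{(p-2)/4}P - (\epsilon^2+|Q|^2)^{(p-2)/4}Q\bigr|^2,
\end{equation*}
applied to $P = Du^\ep(z+he_i)$ and $Q = Du^\ep(z)$. Dividing by $h^2$ converts the principal part of the left-hand side above into $\eta^2 |\Delta_h V_\ep|^2$ plus a cross term controlled by the cut-off, $|\nabla \eta|\, |\Delta_h u^\ep|\, |\Delta_h V_\ep|\, (\ep^2+|Du^\ep|^2)^{(p-2)/4}$, which is absorbed via Young's inequality. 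The curvature term on the right-hand side is treated using $|A(u^\ep)(Du^\ep, Du^\ep)| \leq C|Du^\ep|^2$ together with the uniform bound on $u^\ep$; one splits the difference quotient by the Leibniz rule, estimates $|\Delta_h A(u^\ep)| \leq C|\Delta_h u^\ep|$, and absorbs terms involving $|\Delta_h V_\ep|^2$ into the left-hand side. What remains on the right is controlled by $R^{-2}\|V_\ep - (V_\ep)_{B_R}\|_{L^2(B_R)}^2 + \|(\ep^2+|Du^\ep|^2)^{p/4}\|_{L^2(B_R)}^2$, and the Poincar\'e inequality applied to $V_\ep$ lets us replace pure $|\Delta_h u^\ep|$ terms by cut-off-friendly expressions.

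\textbf{Step 3 (Passing $h \to 0$ and uniform bound).} The standard characterization of $W^{1,2}$ via uniform bounds on difference quotients yields $V_\ep \in W^{1,2}(B_{R/2}, \Lambda_1)$ together with the estimate \eqref{lem93-ineq1}. For the uniform-in-$\ep$ bound \eqref{lem93-uni-est}, choose a finite cover of $K$ by balls $B_{R/2}$ with $B_R \Subset M$; both terms on the right of \eqref{lem93-ineq1} are majorised by $\int_M (\ep^2 + |Du^\ep|^2)^{p/2}\, dV_M$ since $|V_\ep|^2 \leq (\ep^2 + |Du^\ep|^2)^{p/2}$, and this quantity is uniformly controlled by $\|Du_0\|_{L^p(M)}$ via the Caccioppoli estimate \eqref{CacU-1} of Corollary \ref{cor91} (together with $\ep < 1$ and the bounded volume of $M$).

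\textbf{Expected main obstacle.} The delicate point is the careful handling of the quadratic curvature nonhomogeneity under difference quotients: naive bounds produce terms involving $|Du^\ep|^2\, |\Delta_h u^\ep|^2$ which are not obviously absorbable. The resolution uses that $|\Delta_h u^\ep| \leq |h|\,\|Du^\ep\|_\infty$ locally would be circular, so instead one exploits the Poincar\'e-type bound $\|\Delta_h u^\ep\|_{L^2(B_R)} \leq \|Du^\ep\|_{L^2(B_{R+h})}$, combined with the higher integrability of $Du^\ep$ coming from the Gehring-type self-improvement already invoked in Section \ref{sect-homotopy}, to ensure all right-hand side terms are under control and the absorption step closes cleanly.
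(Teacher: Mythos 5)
Your overall strategy --- a difference-quotient Caccioppoli argument for the $V$-function in isothermal coordinates, followed by a covering of $K$ and the energy bound \eqref{CacU-1} --- is the right one in spirit. The paper does not carry out this computation from scratch: it obtains \eqref{lem93-ineq1} by reducing to Hamburger's Caccioppoli inequality for minimizers of degenerate functionals of differential forms (Theorems 4.1, 6.1 and 6.2 in \cite{ham}), checking only that the conformal factors $\rho/\sigma$ enter as bounded coefficients that change constants. Your Step 3 and your derivation of the uniform bound \eqref{lem93-uni-est} coincide with the paper's.

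There is, however, a genuine gap at exactly the point you flag as the ``main obstacle''. After applying the Leibniz rule to $\Delta_h\bigl(\lambda^{\ep}A(\ups)(D\ups,D\ups)\bigr)$ and pairing with $\eta^2\Delta_h\ups$, one is left with a term of the form
\[
\int_{B_R}\lambda^{\ep}\,|D\ups|^2\,\eta^2\,|\Delta_h\ups|^2\,dV_M,
\]
which as $h\to 0$ behaves like $\int(\ep^2+|D\ups|^2)^{\frac{p-2}{2}}|D\ups|^4$, i.e.\ an $L^{p+2}$-type quantity in $D\ups$. The Gehring self-improvement you invoke only yields $D\ups\in L^{p+\delta}_{loc}$ for some small $\delta>0$, so it cannot control this term, and the Poincar\'e bound $\|\Delta_h\ups\|_{L^2}\leq\|D\ups\|_{L^2}$ does not help either; moreover a pointwise bound on $D\ups$ may not be used here, since \eqref{lem93-uni-est} must be uniform in $\ep$. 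The ingredient that actually closes the absorption is the smallness condition \eqref{cond-small} on the target (equivalently, the near-orthogonality of the second fundamental form to the tangent space in the Nash-embedded picture): applying Proposition \ref{app-prop} with the test function $\eta\,\Delta_h\ups$ gives
\[
\int_{B_R}\lambda^{\ep}|D\ups|^2\eta^2|\Delta_h\ups|^2\,dV_M\;\leq\;Cr^2\int_{B_R}\eta^2\,\lambda^{\ep}|\Delta_h D\ups|^2\,dV_M+Cr^2\int_{B_R}\lambda^{\ep}|\nabla\eta|^2|\Delta_h\ups|^2\,dV_M,
\]
and since $\lambda^{\ep}(x)|\Delta_h D\ups(x)|^2\leq C_p|\Delta_h V_{\ep}(x)|^2$ by the standard monotonicity inequality \eqref{phs-est-aux1}, the first term on the right is absorbed into the left-hand side once $r$ is small. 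Without invoking the smallness of $N$ (or the tangential/normal splitting of $\Delta_h\ups$) at this step, the absorption does not close; this is the same mechanism already used in Lemma \ref{lem91} and Theorem \ref{app-thm-uniq}.
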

\begin{proof}
 In order to show the first part of the theorem we follow the discussion in the proof of Theorem 6.2 in Hamburger~\cite{ham}. The proof of \eqref{lem93-ineq1} therein is reduced to the corresponding part of the proof of Theorem 6.1 which in turn  reduces to the proof of parts 1 and 2 in \cite[Theorem 4.1]{ham}. In particular, we may apply the Caccioppoli inequality on pg. 29 in \cite{ham} in our case with the following notation, cf. (2.8) and (2.9) in \cite{ham}:
  \begin{align*}
  H(\omega):=\left(\ep^2+|\utet|^2+|\utetb|^2\right)^{\frac{p}{2}}, \qquad
  V_{\ep}(\omega_0):=(V_{\ep})_{x_0, R}=(V_{\ep})_{B_R}=\vint_{B_R}V_{\ep}dV_M.
  \end{align*}
 In consequence one obtains estimate \eqref{lem93-ineq1}:
  \begin{equation*}
  \int_{B_{R/2}}|DV_{\ep}|^2\,dV_M \leq \frac{c}{R^2}\int_{B_R}|V_{\ep}-(V_{\ep})_{B_R}|^2\,dV_M+c\int_{B_R} (\ep^2+|\dups|^2)^{\frac{p}{2}}\,dV_M.
 \end{equation*}
The constant $c$ does not depend on $\ep$ and does not explode for any value of $p>2$. Moreover, let us observe that since our definition of norm~\eqref{eq-Du-norm} appears in definitions of $V_{\ep}$ and $H$ above, it influences formula (4.10) on pg. 28 in \cite{ham} and computations following it on pg. 28-29. Indeed, the factor $\rho(\ups)/\sigma$ can be included into coefficients $G^{IJ}$ and $G^{KL}$, cf. \cite[(4.10)]{ham} as a multiplication factor. However, since both metrics $\sigma$ and $\rho$ are assumed to be smooth and bounded, there presence in computations at \cite[(4.11)-(4.14)]{ham} manifests in changing only the constants in Hamburger's estimates.

As a result we have that
 \begin{align}
  \int_{B_{R/2}}|DV_{\ep}|^2\,dV_M &\leq \frac{c}{R^2}\int_{B_R}|V_{\ep}-(V_{\ep})_{B_R}|^2\,dV_M+c\int_{B_R} (\ep^2+|\dups|^2)^{\frac{p}{2}}\,dV_M \nonumber \\
  & \leq \frac{4c}{R^2}\int_{B_R}|V_{\ep}|^2\,dV_M+c\int_{B_R} (\ep^2+|\dups|^2)^{\frac{p}{2}}\,dV_M \nonumber \\
  & \leq \frac{4c}{R^2}\int_{B_R} \left(\ep^2+|\dups|^2\right)^{\frac{p-2}{2}}|Du^{\ep}|^2\,dV_M+ c\int_{B_R} (\ep^2+|\dups|^2)^{\frac{p}{2}}\,dV_M. \label{DVep-est0}
\end{align}

In order to complete the above estimate, let us notice that by Caccioppoli-type inequality \eqref{CacU-1} we have for all $\ep>0$ that
\[
  \int_{M} |\dups|^p \,dV_M \leq c\int_{M} \left(\ep^2+|Du_0|^2\right)^{\frac{p-2}{2}} |Du_0|^2\,dV_M.
\]
By combining this estimate with \eqref{CacU-1} and applying these at \eqref{DVep-est0}, we obtain
\begin{align}
 &\int_{B_{R/2}}|DV_{\ep}|^2\,dV_M \nonumber \\
 & \leq \frac{c}{R^2} \int_{M}\left(\ep^2+|Du_0|^2\right)^{\frac{p-2}{2}} |Du_0|^2\,dV_M+c2^p\ep^p{\rm Vol}_M|B_R|+
  2^pc\int_{M} \left(\ep^2+|Du_0|^2\right)^{\frac{p-2}{2}} |Du_0|^2\,dV_M \nonumber \\
 &\leq c\left(1+\frac{1}{R^2}\right) \int_{M}\left(\ep^2+|Du_0|^2\right)^{\frac{p-2}{2}} |Du_0|^2\,dV_M +c\ep^p{\rm Vol}_M|B_R|.   \label{DVep-unif}
\end{align}
where $u_0$ corresponds to the fixed boundary data. Thus, we may bound $\|DV_{\ep}\|_{L^p(B_R)}$ uniformly by a constant depending only on $p$, $R$, norm $\|Du_0\|_{L^p(M, \R^{2\times 2})}$ and on the geometry of $M$.

For an arbitrary compact set $K\subset \Om$ we cover it by balls $B_R$ with $B_R\Subset \Om$ and $R>\frac12$ and obtain assertion \eqref{lem93-uni-est}.
\end{proof}

We are now in a position to state and prove the following crucial result of this section.
\begin{cor}\label{cor-det-conv}
 Under the above notation it holds that $Du^\ep$ converges uniformly to $Du$ on compacta for $\ep\to 0$.
 Furthermore, it holds that
 \begin{equation*}
 J_{V_{\ep}}\to J_V,\quad \hbox{for } \ep \to 0
 \end{equation*}
  uniformly on compact subsets $K\subset M$.
\end{cor}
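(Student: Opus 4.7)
The plan is to first establish locally uniform convergence of the auxiliary fields $V_\ep$, and then translate this information to $D\ups$ and to the Jacobian expression $J_{V_\ep}$ via the bijective radial scaling $\Phi_\ep \colon \xi \mapsto (\ep^2+|\xi|^2)^{(p-2)/4}\xi$. The starting point is Lemma~\ref{lem92}: since $|\nabla \vps|=|D\ups|$ and $v=i\circ u$ via the isometric Nash embedding, one obtains $\|D\ups-Du\|_{L^p(M)}\to 0$. Extracting a subsequence then gives $D\ups\to Du$ pointwise almost everywhere and, consequently, $V_\ep\to V:=|Du|^{(p-2)/2}Du$ pointwise a.e.\ in $M$.

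Next, I would feed Lemma~\ref{lem93} into a Gehring-type self-improvement. Recall that \eqref{lem93-ineq1} has the form
\begin{equation*}
\int_{B_{R/2}}|DV_\ep|^2\,dV_M\leq \frac{c}{R^2}\int_{B_R}|V_\ep-(V_\ep)_{B_R}|^2\,dV_M+c\int_{B_R}(\ep^2+|D\ups|^2)^{p/2}\,dV_M,
\end{equation*}
with constants independent of $\ep$, and that \eqref{CacU-1} provides an $\ep$-uniform bound on the $L^p$-norm of $D\ups$, hence on the inhomogeneous term. Using the two-dimensional Sobolev--Poincar\'e inequality $\|V-V_B\|_{L^2(B)}\leq CR\,\|DV\|_{L^1(B)}$ on the first summand converts this into an inhomogeneous reverse H\"older inequality for $|DV_\ep|^2$ with $\ep$-uniform data. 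Gehring's lemma then yields some $\delta=\delta(p)>0$ and, for every compact $K\subset M$, a constant $C_K$ independent of $\ep$ with $\|DV_\ep\|_{L^{2+\delta}(K)}\leq C_K$.

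Since $\dim M=2$, Morrey's embedding $W^{1,2+\delta}(K)\hookrightarrow C^{0,\alpha}(K)$ with $\alpha=\delta/(2+\delta)$ provides a uniform H\"older bound for $V_\ep$ on each compact set. Arzel\`a--Ascoli then extracts, from any subsequence of $V_\ep$, a further subsequence converging uniformly on compacta; the pointwise a.e.\ identification from the first step forces the limit to be $V$, and since this applies to \emph{every} subsequence, the full family $V_\ep$ converges to $V$ locally uniformly in $M$. To transfer this to $D\ups$, observe that for each $\ep\geq 0$ the map $\Phi_\ep$ is a homeomorphism of $\C^2$ (radial, strictly monotone in the length of $\xi$) and that $\Phi_\ep^{-1}\to \Phi_0^{-1}$ uniformly on bounded subsets of $\C^2$ as $\ep\to 0$. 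Since $V_\ep=\Phi_\ep(D\ups)$ is uniformly bounded on every compact $K\subset M$ (by the uniform H\"older bound) and converges uniformly there to $V=\Phi_0(Du)$, applying $\Phi_\ep^{-1}$ gives the locally uniform convergence $D\ups\to Du$. The Jacobian $J_{V_\ep}$ is a continuous quadratic expression in the components of $V_\ep$, so $J_{V_\ep}\to J_V$ uniformly on compacta follows at once.

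The main obstacle I anticipate is the $\ep$-uniformity in the Gehring step: one must verify that neither the constant in the Caccioppoli inequality \eqref{lem93-ineq1} nor the $L^1$-norm of $(\ep^2+|D\ups|^2)^{p/2}$ hidden in the inhomogeneous term blows up as $\ep\to 0$. The first is built into Lemma~\ref{lem93} (cf.\ the bound \eqref{lem93-uni-est}), and the second is supplied by the Caccioppoli estimate \eqref{CacU-1}; once these two $\ep$-uniform ingredients are in hand, the remainder of the argument reduces to a fairly standard combination of Gehring's lemma, the Morrey embedding and Arzel\`a--Ascoli.
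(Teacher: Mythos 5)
Your proof is correct and follows essentially the same route as the paper's: $\ep$-uniform gradient bounds on $V_\ep$ from Lemma~\ref{lem93} yield equicontinuity on compacta, and Arzel\`a--Ascoli combined with the a.e.\ convergence coming from Lemma~\ref{lem92} identifies the locally uniform limit as $V=|Du|^{(p-2)/2}Du$. You are in fact more careful than the paper's written proof in two places: the Gehring self-improvement from $W^{1,2}$ to $W^{1,2+\delta}$ (needed because $W^{1,2}$ does not embed into $C^{0,\alpha}$ in dimension two; the paper's proof invokes ``the Sobolev embedding'' directly, although the introduction to Section~\ref{subsect-conv} does announce a Gehring-type reverse H\"older inequality), and the explicit inversion of $\xi\mapsto(\ep^2+|\xi|^2)^{(p-2)/4}\xi$ to pass from $V_\ep\to V$ back to $Du^{\ep}\to Du$, a step the paper omits. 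The one point to verify in your Gehring step is that the inhomogeneous term $(\ep^2+|D\ups|^2)^{p/2}$ is bounded in $L^{1+\sigma}$ for some $\sigma>0$ uniformly in $\ep$ (plain $L^1$-boundedness, which is all you mention, does not suffice for the inhomogeneous Gehring lemma); this is supplied by the $\ep$-uniform higher integrability of $D\ups$ (Lemma~3.3 of \cite{beck}), which the paper uses elsewhere.
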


\begin{proof}
 By estimate \eqref{DVep-unif} and the discussion following it we have the uniform in $\ep$ estimates on the Sobolev norms $\|V_\ep\|=\|V_\ep\|_{L^{p}(K)}+\|DV_\ep\|_{L^{p}(K)}$ for any compact $K\subset M$. Hence, by the Sobolev embedding we also have uniform in $\ep$ H\"older estimates for $V_{\ep}$ on $K$. This implies the equicontinuity of the family $\{V_{\ep}\}_{\ep\in(0,1)}$. We combine this observation with a consequence of the convergence result in Lemma~\ref{lem92}:
 \[
  V_{\ep}\longrightarrow |Du|^{p-2} Du\qquad\hbox{ for }\ep\to 0\quad \hbox{ a.e. in } M
 \]
 and infer from the Arzela--Ascoli theorem the uniform convergence of $V_{\ep}$ on any compact $K\subset M$.
\end{proof}

\section{The proof of Theorem~\ref{MainThm}}\label{Sec10}

 \noindent We are in a position to collect the results of the work and to complete the proof of the Rad\'o--Knseser--Chocquet theorem.

\begin{proof} We may assume without loss of generality that $\phi_0$ is positively oriented.

By Proposition \ref{prop67-sec6}, we know that $J_u > 0$ near the boundary $\bdy M$. Thus we can reduce the problem to a subdomain of $M$ with smooth boundary and diffeomorphic boundary data.

 By Propositions~\ref{existence}, \ref{sect-exist-uni-Prop4} and Theorem~\ref{app-thm-uniq} we obtain the existence and the uniqueness of mappings $\ups$ solving the $\ep$-perturbed $p$-harmonic Dirichlet problem and thus can apply  the homotopy argument in Theorem~\ref{thm-jacob}. By Proposition~\ref{Prop-2} we know that each of the Jacobians $J_{\ups}$ of $\ups$ is positive on the boundary and thus obtain by the homotopy argument that all $J_{\ups}$ are positive everywhere.

The computations in the previous section, in particular Corollary~\ref{cor-det-conv}, show that the Jacobians $J_{\ups}$ converge uniformly on compact subsets to $J_u$. Since the Jacobians $J_{\ups}$ are positive, Corollary~\ref{MinPrin} applies and we obtain a minimum principle for the respective expressions $T$ whenever $\ep > 0$. By the uniform convergence, the minimum principle must also hold when $\ep = 0$.

Since the Jacobian of $u$ does not vanish close to the boundary, we may finally conclude that $J_u$ is strictly positive  everywhere.

Therefore, we have shown that $u: M\to N'$ is a local $C^{1,\alpha}$-diffeomorphism with homeomorphic boundary data $\phi_0$. Hence, $u$ is also a homeomorphism from $M$ to $N$. Moreover, $u$ is actually $C^{1, \alpha}$-diffeomorphism from $M$ to $N$, by the inverse function theorem.
\end{proof}

\appendix

\section{Uniqueness for solutions of $\ep$-perturbed $p$-harmonic systems}

\noindent
The first part of the appendix is devoted to proving Proposition \ref{sect-exist-uni-Prop4} (Theorem~\ref{app-thm-uniq}), the uniqueness result for the solution of the Dirichlet problem for the $\ep$-perturbed $p$-harmonic mappings between Riemannian surfaces under the curvature and the smallness assumptions (A1-3), see the introduction  and also the system of equations \eqref{u-system-weak} for $\lambda:=\lambda^{\ep}$. According to our the best knowledge such a result does not appear in the literature explicitly, except for the case $\ep=0$.

First we show an auxiliary result, namely a Caccioppoli type estimate. For this recall that, as in Section~\ref{sect91}, we denote by $v^{\ep} = i \circ \ups$, where $\ups$ stands for a minimizer of the $\ep$-perturbed $p$-harmonic energy \eqref{ep-energy}, including $\ep=0$, and $i: N' \to \rr^k$ is the Nash embedding. By the discussion in Section~\ref{sect31}, $\vps$ is a weak solution to the system of equations~\eqref{v-system-weak}.

\begin{prop}[cf. Proposition 2.1 in \cite{fare} for $\ep=0$]\label{app-prop}
 Let $\vps \in \Sob_{ex}(M,N')$ be a solution to the $\ep$-perturbed system of equations for some $\ep>0$ such that $\vps(M)\subset i(N) \subset B(x,r)$ for some $x\in i(N')$ and $0<r<c_{N'}$, where $B(x, r)$ denotes an Euclidean ball in $\R^k$ and $c_{N'}$ depends only on $N'$. Then it holds
 \begin{equation*}
  \int_{M} (\ep^2+|\nabla \vps|^2)^{\frac{p-2}{2}}|\nabla \vps|^2|\eta|^2\,dV_M \leq 16r^2 \int_{M} (\ep^2+|\nabla \vps|^2)^{\frac{p-2}{2}}|\nabla \eta|^2\,dV_M\quad \hbox{ for all } \eta\in \Sobzero(M, \R^k).
 \end{equation*}
\end{prop}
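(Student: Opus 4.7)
The plan is to adapt the strategy from the $\epsilon=0$ case (Fardoun--Regbaoui, Proposition 2.1) by testing the weak Euler--Lagrange system \eqref{v-system-weak} for $v^\epsilon = i \circ u^\epsilon$ against the test function $\psi := (v^\epsilon - x)\,|\eta|^2$. Since $v^\epsilon(M) \subset B(x,r)$ is bounded and $\eta \in W^{1,p}_0(M,\R^k)$, this $\psi$ is an admissible test function in $W^{1,p}_0(M,\R^k)$. Expanding $\nabla \psi^j = |\eta|^2 \nabla (v^\epsilon)^j + ((v^\epsilon)^j - x^j)\,\nabla|\eta|^2$ and summing over $j$ yields the identity
\[
\int_M \lambda^\epsilon |\nabla v^\epsilon|^2 |\eta|^2 \, dV_M \;=\; -\tfrac{1}{2} \int_M \lambda^\epsilon\, \nabla |v^\epsilon - x|^2 \cdot \nabla |\eta|^2 \, dV_M \;-\; \int_M \lambda^\epsilon\, A'(v^\epsilon)(\nabla v^\epsilon, \nabla v^\epsilon)\cdot(v^\epsilon-x)\,|\eta|^2 \, dV_M,
\]
where the identity $\sum_j((v^\epsilon)^j-x^j)\nabla(v^\epsilon)^j = \tfrac12\nabla|v^\epsilon - x|^2$ was used to simplify the cross term.

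Next, I would bound the two right-hand side terms independently. For the cross term, the pointwise estimate $|\nabla |v^\epsilon - x|^2| \leq 2|v^\epsilon - x|\,|\nabla v^\epsilon| \leq 2r\,|\nabla v^\epsilon|$ together with $|\nabla |\eta|^2| \leq 2|\eta|\,|\nabla \eta|$ reduces it to $2r\int \lambda^\epsilon |\nabla v^\epsilon|\,|\eta|\,|\nabla \eta|\,dV_M$. Applying Cauchy--Schwarz and then the weighted Young inequality $2ab \leq \tfrac{a^2}{4} + 4b^2$ splits this into an absorbable piece $\tfrac14 \int \lambda^\epsilon |\nabla v^\epsilon|^2 |\eta|^2$ plus a good piece $4r^2 \int \lambda^\epsilon |\nabla \eta|^2$.

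For the second fundamental form term, the uniform pointwise bound $|A'(v^\epsilon)(\nabla v^\epsilon,\nabla v^\epsilon)| \leq C_{N'} |\nabla v^\epsilon|^2$ (valid since $i(N')$ is compact) combined with $|v^\epsilon - x| \leq r$ yields a bound by $C_{N'} r \int \lambda^\epsilon |\nabla v^\epsilon|^2 |\eta|^2$. Defining $c_{N'} := 1/(4C_{N'})$ ensures that for $r < c_{N'}$ this piece is dominated by $\tfrac14 \int \lambda^\epsilon |\nabla v^\epsilon|^2 |\eta|^2$. Combining both bounds, absorbing the two $\tfrac14$-pieces into the left-hand side, and rearranging gives
\[
\tfrac12 \int_M \lambda^\epsilon |\nabla v^\epsilon|^2 |\eta|^2 \, dV_M \;\leq\; 4 r^2 \int_M \lambda^\epsilon |\nabla \eta|^2 \, dV_M,
\]
which yields the claim (with room to spare on the constant~$16$).

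The only real subtlety is the justification of the chain rule identities above for $v^\epsilon$ of mere $W^{1,p}$-regularity; however, Proposition~\ref{jatkuvuus} and Corollary~\ref{reunalleasti} already provide $v^\epsilon \in C^{1,\beta}$ locally (and up to $\partial M$ when $\epsilon > 0$), so the manipulations are classically valid. The proof for $\epsilon = 0$ proceeds identically with $\lambda^0 = |\nabla v|^{p-2}$, noting that the test function and all pointwise bounds are insensitive to whether $\epsilon$ vanishes, which matches the statement allowing $\epsilon \geq 0$ via a direct comparison with the Fardoun--Regbaoui argument.
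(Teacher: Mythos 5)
Your proposal is correct and follows essentially the same route as the paper: the same test function $(v^\ep-x)|\eta|^2$ in the weak formulation \eqref{v-system-weak}, the same two-term decomposition, the pointwise bounds $|v^\ep-x|\le r$ and $|A'(v^\ep)(\nabla v^\ep,\nabla v^\ep)|\le C_{N'}|\nabla v^\ep|^2$, and absorption of the resulting terms into the left-hand side. The only (immaterial) difference is that you absorb the Cauchy--Schwarz term via a weighted Young inequality, whereas the paper divides through by $(1-C_{N'}r)$ and squares; both land within the stated constant $16r^2$.
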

\begin{proof}
 Following the steps of the proof for Proposition 2.1 in \cite{fare} we fix a point $x\in i(N')$ and define a test mapping $\phi=\eta^2(\vps-x)$, where $\eta \in \Sobzero(M, \R^k)$. We use this mapping in \eqref{v-system-weak} with $\lambda^{\ep}=(\ep^2+|\nabla \vps|^2)^{\frac{p-2}{2}}$ and obtain
 \begin{align*}
  \int_{M} (\ep^2+|\nabla \vps|^2)^{\frac{p-2}{2}}|\nabla \vps|^2 \eta^2\,dV_M &\leq
 2\int_{M} (\ep^2+|\nabla \vps|^2)^{\frac{p-2}{2}}|\nabla \vps||\nabla \eta||\eta||\vps-x|\,dV_M  \nonumber \\
 &\phantom{AA}+\int_{M} (\ep^2+|\nabla \vps|^2)^{\frac{p-2}{2}}A'(\vps)(\nabla \vps, \nabla \vps)|\vps-x||\eta|^2\,dV_M \nonumber \\
 & \leq 2r\! \left(\int_{M} (\!\ep^2+|\nabla \vps|^2)^{\frac{p-2}{2}}|\nabla \vps|^2|\eta|^2\,dV_M\!\right)^{\!\frac12}\!\! \left(\!\int_{M} (\ep^2+|\nabla \vps|^2)^{\frac{p-2}{2}}|\nabla \eta|^2\,dV_M\!\right)^{\!\frac12}\nonumber \\
 & \phantom{AA}+C_{N'}r\int_{M} (\ep^2+|\nabla \vps|^2)^{\frac{p-2}{2}}|\nabla \vps|^2|\eta|^2\,dV_M,
 \end{align*}
where above we appeal to the smallness assumption on the image of $\ups$ and the H\"older inequality. The bound $C_{N'}$ comes from  the second fundamental form $A'$ and it depends only on the geometry of $N'$ as noted in Section \ref{sect31}.

Let $r$ be such that $C_{N'}r<\frac12$, then upon dividing the both sides by the appropriate factor integral we get
\begin{equation*}
 (1-C_{N'}r)^2\int_{M} (\ep^2+|\nabla \vps|^2)^{\frac{p-2}{2}}|\nabla \vps|^2|\eta|^2\,dV_M
 \leq 4r^2 \int_{M} (\ep^2+|\nabla \vps|^2)^{\frac{p-2}{2}}|\nabla\eta|^2\,dV_M.
\end{equation*}
 From this the assertion follows immediately.
\end{proof}

In the proof of the next uniqueness result we will need the following well-known inequalities, see e.g.  Mingione~\cite{ming}. Let $p\geq 2$. Then for any $\ep>0$ and and vectors $X,Y\in \R^k$ it holds:
\begin{align}
 &\left ( (\ep^2+|X|^2)^{\frac{p-2}{2}}X-(\ep^2+|Y|^2)^{\frac{p-2}{2}}Y \right)\cdot(X-Y) \geq C(p)(\ep^2+|X|^2+|Y|^2)^{\frac{p-2}{2}}|X-Y|^2 \label{phs-est-aux1} \\
 &\left|(\ep^2+|X|^2)^{q}X-(\ep^2+|Y|^2)^{q}Y\right|\leq C(q) \left((\ep^2+|X|^2)^{q}+(\ep^2+|Y|^2)^{q}\right) |X-Y|,\quad q\geq 0. \label{phs-est-aux2}
 \end{align}

For the proof of~\eqref{phs-est-aux2} we define $F:\R^k\to \R^k$  as  follows: if $X \in \R^k$, then set $F(X):=(\ep^2+|X|^2)^{q}X$. We apply the mean-value theorem and find that for any given $X,Y\in \R^k$ it holds that
$$
\aligned
|F(X) - F(Y)| &\leq \sup_{t\in[0,1]}\|DF(tX+(1-t)Y)\|\,|X - Y|\leq \sup_{t\in[0,1]} \left(C_1(q)\left(\ep^2+\left|t|X|+(1-t)|Y|\right|^2\right)^{q}\right)|X - Y| \\
&\leq \left(C_1(q)\left(\ep^2+|X|^2+|Y|^2\right)^{q}\right)|X - Y| \leq C_2(q)\left((\ep^2+|X|^2)^{q}+ (\ep^2+|Y|^2)^{q}\right)|X - Y|,
\endaligned
$$
thus resulting in \eqref{phs-est-aux2}.

We are in a position to state an $\ep$-version of Theorem 1.1 in \cite{fare}.  In the proof below we follow the steps for Theorem 1.1 in \cite{fare} proven for the case $\ep=0$ only. However, since the proof of uniqueness for $\ep$-perturbed system of equations on Riemannian manifolds is, to our best knowledge, not available in the literature, we present it here for the readers convenience and completeness of the work.

\begin{theorem}[cf. Theorem 1.1 in \cite{fare} for $\ep=0$]\label{app-thm-uniq}
Suppose that $v^{\ep}_1, v^{\ep}_2$ are solutions to the $\ep$-perturbed $p$-harmonic system of equations \eqref{v-system-weak} both satisfying the smallness condition for a sufficiently small $r_0=r_0(N', p)>0$ with respect to a geodesic ball $B(x,r_0)\subset N'$, that is, $v^{\ep}_1(M) \subset i(B(x, r_0))$ and $v^{\ep}_2(M) \subset i(B(x, r_0))$. If  $v^{\ep}_1=v^{\ep}_2$ on $\partial M$, then $v^{\ep}_1=v^{\ep}_2$ in $M$.
\end{theorem}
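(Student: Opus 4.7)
The plan is to follow the strategy of Fardoun--Regbaoui \cite{fare} (the case $\ep=0$) but track the $\ep$-dependence carefully. Write $\lambda_j = (\ep^2+|\nabla v^{\ep}_j|^2)^{(p-2)/2}$ for $j=1,2$ and set $\Lambda = \ep^2+|\nabla v^{\ep}_1|^2+|\nabla v^{\ep}_2|^2$. Since $v^{\ep}_1$ and $v^{\ep}_2$ share boundary data, the map $\phi := v^{\ep}_1-v^{\ep}_2$ lies in $W^{1,p}_0(M,\R^k)$ and can be used as a test function in \eqref{v-system-weak} for each solution. Subtracting the two resulting identities yields
\[
-\int_M (\lambda_1\nabla v^{\ep}_1-\lambda_2\nabla v^{\ep}_2)\cdot\nabla\phi\,dV_M=\int_M\bigl(\lambda_1 A'(v^{\ep}_1)(\nabla v^{\ep}_1,\nabla v^{\ep}_1)-\lambda_2 A'(v^{\ep}_2)(\nabla v^{\ep}_2,\nabla v^{\ep}_2)\bigr)\cdot\phi\,dV_M.
\]
The left-hand side is estimated from below by the monotonicity inequality \eqref{phs-est-aux1}, giving
\[
C(p)\int_M \Lambda^{(p-2)/2}|\nabla v^{\ep}_1-\nabla v^{\ep}_2|^2\,dV_M\le|\mathrm{RHS}|.
\]

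To estimate the right-hand side, introduce $Y_j=(\ep^2+|\nabla v^{\ep}_j|^2)^{(p-2)/4}\nabla v^{\ep}_j$ so that, by bilinearity of $A'$, the integrand becomes $A'(v^{\ep}_1)(Y_1,Y_1)-A'(v^{\ep}_2)(Y_2,Y_2)$. Applying the algebraic bound \eqref{app-A-est} recalled in the proof of Lemma~\ref{lem92},
\[
|A'(v^{\ep}_1)(Y_1,Y_1)-A'(v^{\ep}_2)(Y_2,Y_2)|\le C(|Y_1|^2+|Y_2|^2)|v^{\ep}_1-v^{\ep}_2|+C(|Y_1|+|Y_2|)|Y_1-Y_2|,
\]
splits $|\mathrm{RHS}|$ into two pieces. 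For the first piece one notes that $|Y_j|^2=\lambda_j|\nabla v^{\ep}_j|^2$ and applies the Caccioppoli estimate (Proposition~\ref{app-prop}) to each $v^{\ep}_j$ with test function $\eta=\phi\in W^{1,p}_0(M,\R^k)$, since both images lie in $B(x,r_0)$, obtaining a bound by $Cr_0^2\int_M\Lambda^{(p-2)/2}|\nabla v^{\ep}_1-\nabla v^{\ep}_2|^2\,dV_M$. For the second piece, use $|v^{\ep}_1-v^{\ep}_2|\le 2r_0$ together with Cauchy--Schwarz (or Young with a small parameter), the bound $|Y_1-Y_2|\le C\Lambda^{(p-2)/4}|\nabla v^{\ep}_1-\nabla v^{\ep}_2|$ obtained from \eqref{phs-est-aux2} with exponent $q=(p-2)/4$, and again Proposition~\ref{app-prop} to convert $\int(|Y_1|+|Y_2|)^2|v^{\ep}_1-v^{\ep}_2|^2$ into $Cr_0^2\int\Lambda^{(p-2)/2}|\nabla v^{\ep}_1-\nabla v^{\ep}_2|^2$; this piece is again bounded by $Cr_0\int_M\Lambda^{(p-2)/2}|\nabla v^{\ep}_1-\nabla v^{\ep}_2|^2\,dV_M$.

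Combining, we arrive at
\[
(C(p)-C(N')r_0)\int_M\Lambda^{(p-2)/2}|\nabla v^{\ep}_1-\nabla v^{\ep}_2|^2\,dV_M\le 0,
\]
so choosing $r_0=r_0(N',p)>0$ small enough (consistent with the threshold in Proposition~\ref{app-prop}) forces $\nabla v^{\ep}_1=\nabla v^{\ep}_2$ a.e.\ on $M$. Since the traces on $\partial M$ coincide, we conclude $v^{\ep}_1=v^{\ep}_2$ in $M$, as required.

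The main obstacle is the second, mixed term in the bound for the second fundamental form difference: the factor $|Y_1-Y_2|$ is not itself controlled by the natural energy $\Lambda^{(p-2)/2}|\nabla v^{\ep}_1-\nabla v^{\ep}_2|^2$ without first invoking \eqref{phs-est-aux2}, and one must then combine this with the Caccioppoli estimate in such a way that the resulting constant is linear (not quadratic) in $r_0$, because the monotonicity coefficient on the left is an absolute constant $C(p)$. A secondary but routine issue is ensuring that the inequalities \eqref{phs-est-aux1}--\eqref{phs-est-aux2} and the Caccioppoli estimate are all uniform in $\ep\in[0,1)$, which they are since all constants depend only on $p$, $N'$ and $r_0$.
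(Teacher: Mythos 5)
Your argument is correct and follows essentially the same route as the paper's proof: the same test function $\phi=v^{\ep}_1-v^{\ep}_2$, the monotonicity inequality \eqref{phs-est-aux1} on the left, the splitting of the second-fundamental-form difference via \eqref{app-A-est} with $Y_j=(\ep^2+|\nabla v^{\ep}_j|^2)^{(p-2)/4}\nabla v^{\ep}_j$, the Lipschitz bound \eqref{phs-est-aux2} for $|Y_1-Y_2|$, and Cauchy--Schwarz plus the Caccioppoli estimate of Proposition~\ref{app-prop} to make the right-hand side $O(r_0)$ times the energy, which is then absorbed for small $r_0$. Your remark that the mixed term must come out linear (not quadratic) in $r_0$ is exactly the point the paper's computation handles, so nothing is missing.
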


\begin{proof}
 Let $v^{\ep}_1$ and $v^{\ep}_2$ be two solutions of the perturbed system \eqref{v-system-weak}
 as in the assumptions of the theorem. Then, $\phi:=v^{\ep}_1-v^{\ep}_2$ is a bounded test mapping in $\Sobzero(M, \R^k)$. We apply $\phi$ in \eqref{v-system-weak} for $v^{\ep}_1$ and $v^{\ep}_2$ and subtract both equations from each other. We obtain
 \begin{align}\label{app-thm-1}
  -&\int_{M} \left ( (\ep^2+|\nabla v^{\ep}_1|^2)^{\frac{p-2}{2}}\nabla v^{\ep}_1 - (\ep^2+|\nabla v^{\ep}_2|^2)^{\frac{p-2}{2}})\nabla v^{\ep}_2\right)\cdot (\nabla v^{\ep}_1-\nabla v^{\ep}_2) \,dV_M \nonumber \\
  &=\int_{M} \left( (\ep^2+|\nabla v^{\ep}_1|^2)^{\frac{p-2}{2}} A'(v^{\ep}_1)(\nabla v^{\ep}_1,\nabla v^{\ep}_1)-(\ep^2+|\nabla v^{\ep}_2|^2)^{\frac{p-2}{2}}A'(v^{\ep}_2)(\nabla v^{\ep}_2,\nabla v^{\ep}_2)\right)\cdot( v^{\ep}_1-v^{\ep}_2)\,dV_M.
 \end{align}
 Upon applying estimate \eqref{phs-est-aux1} to the left-hand side above one gets:
 \begin{align}
& C(p)\int_{M} (\ep^2+|\nabla v^{\ep}_1|^2+|\nabla v^{\ep}_2|^2)^\frac{p-2}{2}|\nabla v^{\ep}_1-\nabla v^{\ep}_2|^2\,dV_M \nonumber \\
& \phantom{AA}\leq\int_{M}  \left ( (\ep^2+|\nabla v^{\ep}_1|^2)^{\frac{p-2}{2}}\nabla v^{\ep}_1 - (\ep^2+|\nabla v^{\ep}_2|^2)^{\frac{p-2}{2}}\nabla v^{\ep}_2\right)\cdot( \nabla v^{\ep}_1-\nabla v^{\ep}_2) \,dV_M. \label{app-thm-2}
 \end{align}
  Next, we recall the following estimate from Lemma 2.2 in \cite{fare} holding for any point $y,z\in i(N')$ and any vectors $Y\in T_yi(N'), Z\in T_z i(N')$ with a constant $C=C(i, N')$, cf. discussion following~\eqref{app-A-est}:
\begin{equation*}
|A'(y)(Y,Y)-A'(z)(Z,Z)|\leq C(|Y|^2+|Z|^2)|y-z|+C(|Y|+|Z|)|Y-Z|.
\end{equation*}
We apply observation  \eqref{app-A-est} for
\[
y=u^{\ep},\quad z=v^{\ep}_2, \quad Y=\left(\ep^2+|\nabla v^{\ep}_1|^2\right)^{\frac{p-2}{4}}\nabla v^{\ep}_1, \quad Z=\left(\ep^2+|\nabla v^{\ep}_2|^2\right)^{\frac{p-2}{4}}\nabla v^{\ep}_2.
\]
In fact, we abuse here slightly the notation since, technically, we apply  \eqref{app-A-est}  to each of the $k$ gradients of the component functions of map $v^{\ep}_1$ ($v^{\ep}_2$, respectively) and then use the Cauchy-Schwarz inequality, cf. similar discussion on pg. 267 in \cite{fare}. As a consequence we arrive at a counterpart of formula (2.15) in \cite{fare}:
\begin{align}\label{app-thm-3}
&\left|(\ep^2+|\nabla v^{\ep}_1|^2)^{\frac{p-2}{2}} A'(v^{\ep}_1)(\nabla v^{\ep}_1,\nabla v^{\ep}_1)-(\ep^2+|\nabla v^{\ep}_2|^2)^{\frac{p-2}{2}}A'(v^{\ep}_2)(\nabla v^{\ep}_2,\nabla v^{\ep}_2)\right| \nonumber \\
&\leq C(p)\left(\left(\ep^2+|\nabla v^{\ep}_1|^2\right)^{\frac{p-2}{2}}|\nabla v^{\ep}_1|^2+\left(\ep^2+|\nabla v^{\ep}_2|^2\right)^{\frac{p-2}{2}}|\nabla v^{\ep}_2|^2\right)|v^{\ep}_1-v^{\ep}_2| \nonumber \\
&+C(p)\left(\left(\ep^2+|\nabla v^{\ep}_1|^2\right)^{\frac{p-2}{4}}|\nabla v^{\ep}_1|+\left(\ep^2+|\nabla v^{\ep}_2|^2\right)^{\frac{p-2}{4}}|\nabla v^{\ep}_2|\right)\left|\left(\ep^2+|\nabla v^{\ep}_1|^2\right)^{\frac{p-2}{4}}\nabla v^{\ep}_1-\left(\ep^2+|\nabla v^{\ep}_2|^2\right)^{\frac{p-2}{4}}\nabla v^{\ep}_2\right|.
\end{align}
We apply estimate \eqref{phs-est-aux2} and obtain:
\begin{align*}
\left|(\ep^2+|\nabla v^{\ep}_1|^2)^{\frac{p-2}{4}}\nabla v^{\ep}_1-(\ep^2+|\nabla v^{\ep}_2|^2)^{\frac{p-2}{4}}\nabla v^{\ep}_2\right|&\leq C(p) \left((\ep^2+|\nabla v^{\ep}_1|^2)^{\frac{p-2}{4}}+(\ep^2+|\nabla v^{\ep}_2|^2)^{\frac{p-2}{4}}\right) |\nabla v^{\ep}_1-\nabla v^{\ep}_2|,
\end{align*}
which applied at \eqref{app-thm-3} gives us the following estimate:
\begin{align}\label{app-thm-4}
&\left|(\ep^2+|\nabla v^{\ep}_1|^2)^{\frac{p-2}{2}} A'(v^{\ep}_1)(\nabla v^{\ep}_1,\nabla v^{\ep}_1)-(\ep^2+|\nabla v^{\ep}_2|^2)^{\frac{p-2}{2}}A'(v^{\ep}_2)(\nabla v^{\ep}_2,\nabla v^{\ep}_2)\right| \nonumber \\
&\leq C(p)\left(\left(\ep^2+|\nabla v^{\ep}_1|^2\right)^{\frac{p-2}{2}}|\nabla v^{\ep}_1|^2+\left(\ep^2+|\nabla v^{\ep}_2|^2\right)^{\frac{p-2}{2}}|\nabla v^{\ep}_2|^2\right)|v^{\ep}_1-v^{\ep}_2| \nonumber \\
&+2C(p) \left(\left(\ep^2+|\nabla v^{\ep}_1|^2\right)^{\frac{p-2}{4}}|\nabla v^{\ep}_1|+\left(\ep^2+|\nabla v^{\ep}_2|^2\right)^{\frac{p-2}{4}}|\nabla v^{\ep}_2|\right)\left((\ep^2+|\nabla v^{\ep}_1|^2)^{\frac{p-2}{4}}+(\ep^2+|\nabla v^{\ep}_2|^2)^{\frac{p-2}{4}}\right) |\nabla v^{\ep}_1-\nabla v^{\ep}_2|.
\end{align}

Notice that if $r_0$ is small enough, then a geodesic ball $B(x, r_0)\subset N'$\, is, under the Nash embedding $i$, contained in a ball $B(i(x),r)\subset \R^k$, for $0<r<C_{N'}$ satisfying assumptions of Proposition~\ref{app-prop}. Thus, upon collecting inequalities \eqref{app-thm-2}, \eqref{app-thm-4}, using them at \eqref{app-thm-1}  and applying the Cauchy-Schwarz inequality together with three times Proposition \ref{app-prop} for $\eta=v^{\ep}_1-v^{\ep}_2\in \Sobzero(M, \R^k)$ we get
\begin{align}
&\int_{M}  (\ep^2+|\nabla v^{\ep}_1|^2+|\nabla v^{\ep}_2|^2)^\frac{p-2}{2}|\nabla v^{\ep}_1-\nabla v^{\ep}_2|^2\,dV_M \nonumber \\
&\leq C(p)\int_{M} \left(\left(\ep^2+|\nabla v^{\ep}_1|^2\right)^{\frac{p-2}{2}}|\nabla v^{\ep}_1|^2+\left(\ep^2+|\nabla v^{\ep}_2|^2\right)^{\frac{p-2}{2}}|\nabla v^{\ep}_2|^2\right)|v^{\ep}_1-v^{\ep}_2|^2\,dV_M \nonumber \\
&\phantom{AA}+2C(p)\int_{M} \left(\left(\ep^2+|\nabla v^{\ep}_1|^2\right)^{\frac{p-2}{4}}|\nabla v^{\ep}_1|+\left(\ep^2+|\nabla v^{\ep}_2|^2\right)^{\frac{p-2}{4}}|\nabla v^{\ep}_2|\right)|v^{\ep}_1-v^{\ep}_2| \nonumber \\
&\phantom{AAAA}\times\left((\ep^2+|\nabla v^{\ep}_1|^2)^{\frac{p-2}{4}}+(\ep^2+|\nabla v^{\ep}_2|^2)^{\frac{p-2}{4}}\right) |\nabla v^{\ep}_1-\nabla v^{\ep}_2| \,dV_M \nonumber \\
&\leq 16r^2C(p)\int_{M} \left(\ep^2+|\nabla v^{\ep}_1|^2+|\nabla v^{\ep}_2|^2\right)^{\frac{p-2}{2}}|\nabla v^{\ep}_1-\nabla v^{\ep}_2|^2\,dV_M \nonumber \\
&\phantom{AA}+8\sqrt{2}rC(p)\left(\int_{M} \left(\left(\ep^2+|\nabla v^{\ep}_1|^2\right)^{\frac{p-2}{2}}+\left(\ep^2+|\nabla v^{\ep}_2|^2\right)^{\frac{p-2}{2}}\right)|\nabla v^{\ep}_1-\nabla v^{\ep}_2|^2 \,dV_M\right)^{\frac12} \nonumber \\
&\phantom{AAAA}\times\left(\int_{M} \left(\ep^2+|\nabla v^{\ep}_1|^2+|\nabla v^{\ep}_2|^2\right)^{\frac{p-2}{2}}|\nabla v^{\ep}_1-\nabla v^{\ep}_2|^2 \,dV_M\right)^{\frac12}. \label{thmA1-factor}
\end{align}
We divide the both sides of the resulting inequality by the factor~\eqref{thmA1-factor} and notice that both $\left(\ep^2+|\nabla v^{\ep}_i|^2\right)^{\frac{p-2}{2}}$, for $i=1,2$, are trivially bounded by $ (\ep^2+|\nabla v^{\ep}_1|^2+|\nabla v^{\ep}_2|^2)^\frac{p-2}{2}$. Hence, we obtain:
\begin{align*}
 &\left(\int_{M}  (\ep^2+|\nabla v^{\ep}_1|^2+|\nabla v^{\ep}_2|^2)^\frac{p-2}{2}|\nabla v^{\ep}_1-\nabla v^{\ep}_2|^2\,dV_M\right)^{\frac12}\\
& \leq C(p)(16r^2+8\sqrt{2}\cdot\sqrt{2}r)\left(\int_{M} \left(\ep^2+|\nabla v^{\ep}_1|^2+|\nabla v^{\ep}_2|^2\right)^{\frac{p-2}{2}}|\nabla v^{\ep}_1-\nabla v^{\ep}_2|^2\,dV_M\right)^{\frac12}. 
\end{align*}
From this, upon squaring the both sides of the last inequality we get the following estimate:
\begin{align*}
\int_{M}(\ep^2+|\nabla v^{\ep}_1|^2+|\nabla v^{\ep}_2|^2)^{\frac{p-2}{2}} |\nabla v^{\ep}_1-\nabla v^{\ep}_2|^2\leq C^2(p)(16r^2+16r)^2\int_{M} \left(\ep^2+|\nabla v^{\ep}_1|^2+|\nabla v^{\ep}_2|^2\right)^{\frac{p-2}{2}}|\nabla v^{\ep}_1-\nabla v^{\ep}_2|^2\,dV_M.
\end{align*}
Therefore, if $r$ is small enough, then $\nabla v^{\ep}_1=\nabla v^{\ep}_2$ a.e. in $M$ which implies that $v^{\ep}_1=v^{\ep}_2$ as these mappings agree on $\partial M$.
\end{proof}

\section{Construction of a local contraction in the proof of Proposition~\ref{weakmaxprin}}

\noindent
In this section we suppose that $(M,g)$ is a Riemannian manifold diffeomorphic to the unit sphere. We aim to prove the following.

\begin{theorem}\label{contraction}
Let $B_r := B(p,r)$ be a geodesic ball on the manifold $M$. If $r = r_M > 0$ is a small enough radius, then there exists a Lipschitz map $\Psi:M \to M$ which is the identity map on $\bar{B_r}$ and a local contraction from $M \setminus \bar{B_r}$ to $B_r$. In terms of the derivative, $|D\Psi| = 1$ on $\bar{B_r}$ and $|D\Psi| < 1$ outside $\bar{B_r}$.
\end{theorem}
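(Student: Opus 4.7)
The plan is to construct $\Psi$ explicitly in geodesic polar coordinates centred at $p$. Since $M$ is diffeomorphic to $S^2$, it is compact; hence the injectivity radius $\mathrm{inj}(p)$ is positive, and on the ball $B(0,\mathrm{inj}(p))\subset T_pM$ the map $\exp_p$ is a diffeomorphism. For $r$ small enough, I will choose $R\in(2r,\mathrm{inj}(p))$ such that in the polar coordinates $(s,\theta)\in(0,R]\times S^1$ induced by $\exp_p$, the metric takes the form $ds^2+G(s,\theta)^2\,d\theta^2$ with $G(s,\theta)>0$ and $\partial_s G(s,\theta)>0$ on $(0,R]\times S^1$. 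The latter monotonicity holds near $s=0$ because $G(s,\theta)=s-\tfrac{K(p)}{6}s^3+O(s^4)$; by compactness of $S^1$ and smoothness of the metric, it propagates to all of $(0,R]$ once $R$ is taken smaller than the first focal distance along radial geodesics, which is bounded below in terms of the geometry of $M$ alone.

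Next, pick a piecewise-linear profile $f\colon[0,\infty)\to[0,r]$ with $f(s)=s$ for $s\in[0,r]$, $f(s)=r-\tfrac{r}{R-r}(s-r)$ for $s\in[r,R]$, and $f(s)=0$ for $s\ge R$. Since $R>2r$, the slope satisfies $|f'(s)|=\tfrac{r}{R-r}<1$ on $(r,R)$. Define
\[
\Psi(x)=
\begin{cases}
\exp_p\bigl(f(d(x,p))\,\omega_x\bigr) & \text{if } x\in\bar B_R(p)\setminus\{p\},\\
p & \text{if } x=p \text{ or } x\in M\setminus\bar B_R(p),
\end{cases}
\]
where $\omega_x=\exp_p^{-1}(x)/|\exp_p^{-1}(x)|\in T_pM$ is the unit radial direction. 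By construction $\Psi$ is the identity on $\bar B_r$, $\Psi(M)\subset\bar B_r$, and the two definitions agree on $\partial B_R$ (both equal $p$, since $f(R)=0$), so $\Psi$ is continuous on $M$.

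The pointwise Lipschitz/contraction bound is obtained in the orthonormal frames $\{\partial_s,\,G(s,\theta)^{-1}\partial_\theta\}$ and $\{\partial_s,\,G(f(s),\theta)^{-1}\partial_\theta\}$ at source and target, where the differential of $\Psi$ takes the diagonal form
\[
D\Psi=\begin{pmatrix} f'(s) & 0 \\ 0 & G(f(s),\theta)/G(s,\theta)\end{pmatrix}.
\]
On $\bar B_r$ this gives $D\Psi=I$, hence $\|D\Psi\|_{\mathrm{op}}=1$. On the annulus $\bar B_R\setminus\bar B_r$ the diagonal entries satisfy $|f'(s)|<1$ by the choice of $R$, and $G(f(s),\theta)<G(s,\theta)$ by strict monotonicity of $G(\cdot,\theta)$ together with $f(s)<s$, so $\|D\Psi\|_{\mathrm{op}}<1$. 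On $M\setminus\bar B_R$, $\Psi\equiv p$, so $D\Psi=0$. In particular $\|D\Psi\|_{\mathrm{op}}<1$ everywhere on $M\setminus\bar B_r$, and the piecewise bounds imply that $\Psi$ is globally $1$-Lipschitz; the only non-smooth loci, the curves $\partial B_r$ and $\partial B_R$, contribute no extra Lipschitz constant since the defining formulas match continuously there.

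The only genuine obstacle is ensuring the two geometric hypotheses used above, namely that (i) $\exp_p$ is a diffeomorphism on $B(0,R)$ and (ii) $\partial_s G(s,\theta)>0$ on $(0,R]\times S^1$. Both are open conditions satisfied at $s=0$ and propagate to a uniform $R_0>0$ by compactness of $M$ and standard Jacobi-field estimates. Taking $r_M<R_0/3$ and $R=3r$ then validates the construction for every $r<r_M$.
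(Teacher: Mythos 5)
Your construction is correct, but it takes a genuinely different route from the paper's. The paper defines $\Psi$ on the annulus $B_{r_0}\setminus\bar{B_r}$ as the radial projection onto the sphere $\partial B_r$ and, outside $B_{r_0}$, as the composition of an arbitrary Lipschitz retraction with a strong radial contraction; the contraction property on the annulus is then proved \emph{synthetically}, by showing $\dist(\Psi(q_1),\Psi(q_2))<\dist(q_1,q_2)$ for pairs of points, which requires the four auxiliary lemmas of the appendix (an angle-sum estimate for small geodesic triangles via Gauss--Bonnet, the first variation of energy, the lemma on tangent geodesics not re-entering the ball, and the strict divergence of geodesics issuing from a common point). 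You instead write the metric in geodesic polar coordinates as $ds^2+G(s,\theta)^2\,d\theta^2$, choose a radial profile $f$ with $|f'|<1$ on the annulus, and read the contraction property off the diagonal form $\mathrm{diag}\bigl(f'(s),\,G(f(s),\theta)/G(s,\theta)\bigr)$ of $D\Psi$ in orthonormal frames; the only geometric input is the standard Jacobi-field fact that $\partial_s G>0$ below a uniform radius determined by an upper curvature bound and the injectivity radius. This is shorter, bypasses all four lemmas, and is more quantitative, since it yields an explicit bound on $\|D\Psi\|_{\mathrm{op}}$ rather than only a strict pairwise distance decrease; the paper's argument, in exchange, never differentiates $\Psi$ or touches the coordinate degeneracy at $p$. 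A cosmetic difference: your $\Psi$ sends $M\setminus\bar{B_r}$ into the open ball $B_r$ (collapsing $M\setminus B_R$ to the centre), whereas the paper's sends the annulus onto $\partial B_r$; both versions serve the energy-comparison argument in Proposition~\ref{weakmaxprin} equally well.
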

\noindent In the proof we will need the following auxiliary results.
\begin{lem}\label{triangleLemma} Let $T$ denote a geodesic triangle contained in a ball $B(p,r)$ on $M$. Denote its angles by $\theta_1,\theta_2$ and $\theta_3$. If $r$ is small enough (only depending on $M$), then the sum of any two of these angles is less than $\pi$.
\end{lem}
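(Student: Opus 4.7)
The natural approach is via Gauss--Bonnet for geodesic triangles on $M$:
\begin{equation*}
\theta_1 + \theta_2 + \theta_3 = \pi + \int_T K\, dA_M,
\end{equation*}
where $K$ denotes the Gaussian curvature of $M$. Thus the desired inequality $\theta_i + \theta_j < \pi$ is equivalent to $\theta_k > \int_T K\, dA_M$ for each permutation $\{i,j,k\}$. Since $M$ is compact, $|K| \leq K_0 = K_0(M)$, and the whole problem reduces to proving the \emph{shape-sensitive} area bound
\begin{equation*}
\mathrm{Area}(T) \leq C\, r^2\, \theta_v \qquad \text{at each vertex $v$ of $T$,}
\end{equation*}
with $C = C(M)$. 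Indeed, plugging this into Gauss--Bonnet gives $\theta_i + \theta_j \leq \pi - \theta_k(1 - CK_0 r^2) < \pi$ as soon as $r$ is chosen so that $CK_0 r^2 < 1$.

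To establish the area bound I would work in normal polar coordinates $(\rho,\phi)$ centered at the chosen vertex $v$, which are valid in $B(v,2r)$ provided $r$ is less than half the injectivity radius of $M$. Taking $r$ also smaller than the convexity radius of $M$ ensures that $B(p,r)$ is geodesically convex, so the opposite geodesic side of $T$ stays inside $B(v,2r)$ and the whole triangle is covered by the single chart $\exp_v$. In these coordinates, the two sides of $T$ emanating from $v$ are straight radial segments enclosing the angular sector $\phi \in [\phi_j,\phi_k]$ with aperture $|\phi_k - \phi_j| = \theta_v$, while the area element takes the form $J(\rho,\phi)\, d\rho\, d\phi$ for the radial Jacobi field $J$ normalized by $J(0)=0$, $J'(0)=1$. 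The expansion $J(\rho) = \rho - \tfrac{K(v)}{6}\rho^3 + O(\rho^4)$ (or Rauch comparison against the constant-curvature model) yields $J(\rho) \leq 2\rho$ uniformly for $\rho \leq 2r$ once $r$ is small enough. Since every radial ray inside $T$ has length at most $2r$, integrating over the sector gives
\begin{equation*}
\mathrm{Area}(T) \leq \int_{\phi_j}^{\phi_k}\!\int_0^{2r} 2\rho\, d\rho\, d\phi = 4 r^2\, \theta_v,
\end{equation*}
which is the required estimate with $C = 4$.

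The main obstacle is precisely the uniformity of this area estimate across all triangle shapes, especially nearly degenerate ones: a crude bound $\mathrm{Area}(T) \leq C r^2$ only produces a Gauss--Bonnet defect of size $O(r^2)$, which cannot be absorbed into $\theta_k$ when one of the angles is much smaller than $r$. The extra factor $\theta_v$ extracted from integrating over a sector of aperture $\theta_v$ is the decisive gain, and it is what dictates placing the normal polar coordinates at the chosen vertex rather than at the centre $p$ of the ambient ball. Note that in the curvature-nonpositive regime $K \leq 0$ the conclusion is immediate, as $\int_T K\, dA_M \leq 0 < \theta_k$; the substance of the argument lies in handling positive Gaussian curvature.
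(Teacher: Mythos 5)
Your proposal is correct and follows essentially the same route as the paper: Gauss--Bonnet reduces the claim to the shape-sensitive area bound $\mathrm{Area}(T)\leq C r^2\theta_1$, after which one chooses $r$ with $CK_0r^2<1$. The only (minor) difference is in how that area bound is obtained -- the paper compares the preimage of $T$ under the exponential map at the vertex with a flat triangle of area $\tfrac12 r^2\sin\theta_1$, whereas you integrate the radial Jacobi field over the sector of aperture $\theta_1$; your version is, if anything, slightly more careful, since the preimage of the opposite geodesic side need not be a straight segment, though it does stay in the sector.
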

\begin{proof}
Without loss of generality, we aim to show that
\begin{equation}\label{angleClaim}
\theta_2 + \theta_3 < \pi.
\end{equation}
Using the Gauss--Bonnet theorem, we find the formula
\begin{equation}\label{angleEstimate}
\pi - \theta_2 - \theta_3 = \theta_1 - \int_{T} K(x) dV_M(x),
\end{equation}
where $K(x)$ denotes the pointwise Gauss curvature of $M$. Let $K_0$ be an upper bound for $K(x)$. The triangle $T$ is contained in the ball $B(p,r)$, hence it has two sides with length less than $r$ and angle $\theta_1$ between them. The area of a flat triangle with these sides and angle is $\frac12 r^2 \sin \theta_1$. Since the exponential map is a local diffeomorphism and the preimage of $T$ under $\exp_p$ is contained in such a flat triangle, we get the estimate
\[\left|\int_{T} K(x) dV_M(x)\right| \leq K_0 \int_T dV_M(x) \leq C r^2 \sin \theta_1 \leq C r^2 \theta_1.\]
Choose now $r$ small enough so that $C r^2 < 1$. Then by \eqref{angleEstimate} we get equation \eqref{angleClaim} as wanted.
\end{proof}
\begin{lem}\label{oneillformula}
Let $\gamma : (-\ep,\ep) \times [a,b] \to M$ be a smooth family of curves $\gamma_s(t) = \gamma(s,t)$. Then
\begin{equation}\label{oneillequation}\frac{d}{ds} \biggr\rvert_{s=0} \ \frac{1}{2}\int_a^b |\dot{\gamma_s}|^2 dt = g(V,\dot{\gamma_0})\biggr\rvert_a^b - \int_a^b g(V,\ddot{\gamma}),
\end{equation}
where $V = \partial_s \gamma \rvert_{s = 0}$ is the variation field.
\end{lem}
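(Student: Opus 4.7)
The identity is the classical first variation formula for the energy functional, so the plan is standard: differentiate under the integral sign, swap covariant derivatives using the symmetry of the Levi-Civita connection, and then integrate by parts in $t$.

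First I would pull the derivative $d/ds$ inside the integral and apply the metric compatibility of the Levi-Civita connection $\nabla$ to get
\[
\frac{d}{ds}\,\tfrac12 |\dot\gamma_s|^2 \;=\; g\bigl(\nabla_{\partial_s} \dot\gamma_s,\, \dot\gamma_s\bigr),
\]
where $\dot\gamma_s = \partial_t \gamma$. The key geometric input is then the symmetry lemma (vanishing of the torsion of $\nabla$), which gives $\nabla_{\partial_s}\partial_t\gamma = \nabla_{\partial_t}\partial_s\gamma$. Thus the integrand becomes $g(\nabla_{\partial_t}(\partial_s\gamma),\, \dot\gamma_s)$.

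Second, I would apply the Leibniz rule (again using metric compatibility) to recognize this as a total $t$-derivative modulo a correction:
\[
\tfrac{d}{dt}\, g(\partial_s\gamma,\, \dot\gamma_s) \;=\; g\bigl(\nabla_{\partial_t}\partial_s\gamma,\, \dot\gamma_s\bigr) \;+\; g\bigl(\partial_s\gamma,\, \nabla_{\partial_t}\dot\gamma_s\bigr).
\]
Integrating in $t$ from $a$ to $b$ and rearranging gives
\[
\int_a^b g\bigl(\nabla_{\partial_s}\dot\gamma_s,\, \dot\gamma_s\bigr)\,dt \;=\; g(\partial_s\gamma,\, \dot\gamma_s)\Big|_a^b \;-\; \int_a^b g\bigl(\partial_s\gamma,\, \nabla_{\partial_t}\dot\gamma_s\bigr)\,dt.
\]
Evaluating at $s=0$, noting $\partial_s\gamma|_{s=0} = V$ and $\nabla_{\partial_t}\dot\gamma_s|_{s=0} = \ddot\gamma_0 =: \ddot\gamma$, yields exactly \eqref{oneillequation}.

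There is essentially no obstacle beyond correctly invoking the two defining properties of the Levi-Civita connection (metric compatibility and torsion-freeness). The only care needed is notational: $\ddot\gamma$ must be interpreted as the covariant acceleration $\nabla_{\partial_t}\dot\gamma$, not a naive second derivative, so that the identity is intrinsic and independent of local coordinates. Since the lemma is only used to derive the geodesic equation and convexity-type consequences in Appendix B, this standard computation is all that is required.
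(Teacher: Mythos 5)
Your argument is correct and complete: differentiating under the integral, invoking metric compatibility and the symmetry lemma $\nabla_{\partial_s}\partial_t\gamma=\nabla_{\partial_t}\partial_s\gamma$, and integrating by parts in $t$ is exactly the standard first-variation-of-energy computation, and your sign bookkeeping and the interpretation of $\ddot\gamma$ as the covariant acceleration $\nabla_{\partial_t}\dot\gamma$ at $s=0$ are right. The paper itself does not carry out this computation but simply cites O'Neill (Chapter 10, Proposition 39), so your proof is the standard derivation underlying that reference rather than a different route.
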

\begin{proof} This formula follows from O'Neill \cite{one}, Chapter 10, Proposition 39, page 289.
\end{proof}

\begin{lem}\label{tangentlemma}
Let $r > 0$ be a small enough radius and $B_r := B(p,r)$ be a geodesic ball on the manifold $M$. Let $\gamma$ denote a geodesic passing through a point $p_0 \in \partial B_r$ and passing in the same direction as the boundary $\partial B_r$ at $p_0$. By the latter condition we mean that $\dot{\gamma}(p_0)$ is equal, up to multiplication by $\pm 1$, to the tangent vector of $\partial B_r$ at $p_0$, see the picture below. Then the intersection of $B_r$ and $\gamma$ contains only the point $p_0$.
\end{lem}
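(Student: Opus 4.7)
Parametrize $\gamma$ by arclength with $\gamma(0) = p_0$, and set $h(t) := \tfrac{1}{2}d(p,\gamma(t))^2$. The tangency $\dot\gamma(0) \in T_{p_0}(\partial B_r)$ combined with the fact that $\nabla h$ at $p_0$ equals $r$ times the outward radial unit vector immediately gives $h'(0)=0$, while $h(0)=r^2/2$ by construction. By choosing $r$ smaller than the convexity radius of $M$ at $p$ (halved if needed), standard Hessian comparison results---or a direct computation in normal coordinates at $p$, where $\Gamma^k_{ij}(x)=O(|x|)$---yield $\operatorname{Hess} h \geq \tfrac{1}{2}\,g$ pointwise on $B(p,2r)$. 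Consequently $h\circ\gamma$ is strictly convex on any connected interval during which $\gamma$ stays in $B(p,2r)$.

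Let $I_0$ be the maximal such interval containing $0$. Strict convexity of $h|_{I_0}$ together with $h'(0)=0$ and $h(0)=r^2/2$ forces $h(t) > r^2/2$ for every $t \in I_0 \setminus \{0\}$, hence $\gamma(t) \notin \bar B_r$ on this set. Outside $I_0$, either $\gamma(t)$ lies at Riemannian distance $\geq 2r$ from $p$---automatically outside $B_r$---or $\gamma(t)$ belongs to a later component of $\{s : \gamma(s) \in B(p,2r)\}$ which can only be reached after $\gamma$ has first exited $B(p,2r)$.

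The main obstacle is ruling out this last scenario: a re-entry of $\gamma$ into $B(p,2r)$ that also enters $\bar B_r$. Suppose for contradiction that $\gamma(t_1) \in \bar B_r$ for some minimal $t_1 > 0$, and call this point $p_1$. I would form the geodesic triangle $T$ with vertices $p$, $p_0$, $p_1$: two sides are the radial minimizing geodesics of length $r$, and the third side is the unique minimizing geodesic from $p_0$ to $p_1$. Since $p_0, p_1 \in \bar B_r \subset B(p,2r)$ and $B(p,2r)$ is geodesically convex by the choice of $r$, this third side also lies in $B(p,2r)$, so $T \subset B(p,2r)$.

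The interior angle of $T$ at $p_0$ is exactly $\pi/2$ by the tangency of $\gamma$ to $\partial B_r$. At $p_1$, the velocity of $\gamma$ has non-negative inner product with the inward radial direction $[p_1,p]$ (since $\gamma$ either crosses into $\bar B_r$ from outside or is tangent to $\partial B_r$ from outside), so the opposite velocity---which points into the triangle along the third side---has non-positive inner product with $[p_1,p]$. Thus the interior angle at $p_1$ is at least $\pi/2$, and the sum of the interior angles at $p_0$ and $p_1$ is at least $\pi$. Provided $r$ is also taken smaller than half the threshold $r_M$ of Lemma~\ref{triangleLemma}, the triangle $T \subset B(p,2r) \subset B(p,r_M)$ lies in the range of applicability of that lemma, which forces the sum of any two interior angles to be strictly less than $\pi$---a contradiction. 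Hence no such $p_1$ exists, completing the proof.
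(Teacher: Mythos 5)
Your local argument (strict convexity of $h=\tfrac12 d(p,\cdot)^2$ on a small ball via Hessian comparison, so that $h\circ\gamma$ has a strict minimum at the tangency point on the maximal interval where $\gamma$ stays in $B(p,2r)$) is correct and is a different, arguably cleaner route than the paper's, which instead computes the first variation of $\dist(p,\gamma(s))^2$ via the radial geodesics $\phi_s$, uses the Gauss lemma to get a critical point at $s=0$, and determines the sign of the derivative for $s\neq 0$ from the angle-sum Lemma~\ref{triangleLemma}. Both give the same local conclusion.

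The gap is in your treatment of a re-entry of $\gamma$ into $\bar B_r$. You build the geodesic triangle $T=\Delta p\,p_0\,p_1$ whose third side is, by your own construction, the \emph{minimizing} geodesic from $p_0$ to $p_1$ --- but you then compute the interior angles of $T$ at $p_0$ and $p_1$ using the velocity of $\gamma$. The angle of $T$ at $p_0$ is the angle between the radial geodesic $[p_0,p]$ and the chord $[p_0,p_1]$, not between $[p_0,p]$ and $\dot\gamma(0)$; these coincide only if $\gamma|_{[0,t_1]}$ \emph{is} the chord, which is exactly what fails in the re-entry scenario (that segment leaves $B(p,2r)$ and so is not the minimizing geodesic between two points of $\bar B_r$). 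Likewise $-\dot\gamma(t_1)$ does not point along the third side at $p_1$. So neither claimed angle of $T$ is justified, and Lemma~\ref{triangleLemma} yields no contradiction. If you instead take the third side of the ``triangle'' to be $\gamma|_{[0,t_1]}$ itself, the angles are as you say, but that region is not contained in a small ball, so Lemma~\ref{triangleLemma} does not apply to it. The paper closes this step differently: it invokes the strong convexity of small geodesic balls (Whitehead's theorem, Sakai IV.5.3) to conclude that a geodesic segment of $\gamma$ with both endpoints in $\bar B_r$ must itself lie in $\bar B_r$, which contradicts the local statement near $p_0$; to repair your proof you should replace the triangle argument by an appeal to this convexity property (or restrict the lemma to the geodesic segment staying in $B(p,2r)$, which is all that is used later).
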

\begin{proof}
Let $\epsilon > 0$ be a small positive number. We suppose that $\gamma(0) = p_0$, and aim to first prove that the geodesic segment $\gamma((-\ep,\ep))$ does not contain other points of $B_r$ except $p_0$. To this end, let $\phi_s$ denote the geodesic passing through $p$ (the center of $B_r$) and the point $\gamma(s)$. This way we obtain a family of geodesics $\phi_s : (-\ep,\ep) \times [0,1] \to M$. By Lemma \ref{oneillformula} applied at time $s_0\in (-\ep,\ep)$ in place of $0$, we obtain the formula
\[\frac{d}{ds} \biggr\rvert_{s=s_0} \ \frac{1}{2}\int_0^1 |\dot{\phi_s}|^2 dt = g(\dot{\gamma}(s_0),\dot{\phi_{s_0}}(1)).\]
Since $|\dot{\phi_s}|$ is constant and equal to the geodesic distance between points $p$ and $\gamma(s)$ due to $\phi_s$ being parametrized on $[0,1]$, we obtain that
\begin{equation}\label{distanceCalculation}\frac{d}{ds} \biggr\rvert_{s=s_0} \ \frac12 \dist(p,\gamma(s))^2 = g(\dot{\gamma}(s_0),\dot{\phi_{s_0}}(1)).\end{equation}
We wish to show that the minimum of $\dist(p,\gamma(s_0))^2$ is attained at $s_0 = 0$, as this will prove that $\dist(p,\gamma(s_0)) > r$ for every $s_0 \in (-\ep,\ep) \setminus \{0\}$. If we let $s_0 = 0$, then the right hand side of the above equation is zero since the Gauss lemma states that the boundary of $B_r$ at $p_0$ is orthogonal to the geodesic passing through $p$ and $p_0$. Furthermore, Lemma \ref{triangleLemma} shows that the angle between $\gamma$ and $\phi_{s_0}$ is less than $\pi/2$ for $s_0 > 0$ and greater than $\pi/2$ for $s_0 < 0$. Hence by inspecting the sign of the derivative we see from \eqref{distanceCalculation} that the minimum for $\dist(p,\gamma(s_0))^2$ is truly attained at $s_0 = 0$.\\
\begin{center}\includegraphics[scale=0.35]{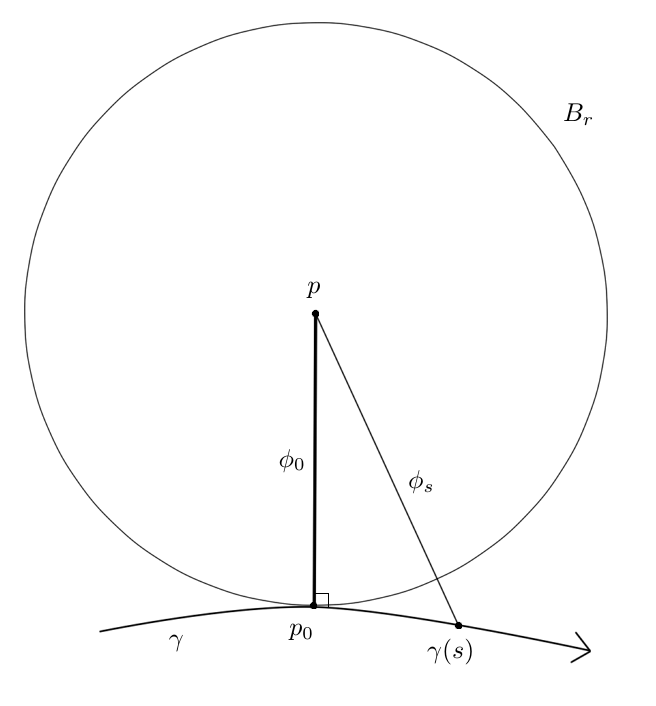}\end{center}
Now by \cite{Sakai}, Theorem 5.3, p. 169 in Chapter IV, the ball $B_r$ is geodesically convex for small enough $r$. Hence if a point $\gamma(s)$ lies in $\bar{B_r}$, then the whole geodesic segment $\gamma([0,s])$ lies in $\bar{B_r}$. If $s \neq 0$ this gives a contradiction to the fact that the segment $\gamma((-\ep,\ep))$ only intersects $\partial B_r$ at $p_0$.
\end{proof}
\begin{lem}\label{geodesiclemma}
Let $a, b: [0,1]\to M$ denote two unit speed geodesics on $M$, such that $a(0) = b(0) = p \in M$. Then the geodesic distance $\dist(a(t),b(t))$ is a strictly increasing function of $t$ for all small enough $t$. More explicitly, the distance remains strictly increasing as long as $a(t), b(t) \subset B(p,r)$ for a fixed small enough radius $r$ only depending on $M$.
\end{lem}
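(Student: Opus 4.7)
The plan is to differentiate the squared distance $L(t)^2 := \dist(a(t),b(t))^2$ using the first variation formula (Lemma~\ref{oneillformula}) and to show strict positivity of the derivative for small $t > 0$ by invoking the angle bound from Lemma~\ref{triangleLemma}. First I would fix $r > 0$ small enough that $B(p,r)$ is geodesically convex (this is \cite{Sakai}, Theorem 5.3, p.~169, already used above) and small enough that the conclusion of Lemma~\ref{triangleLemma} holds for every geodesic triangle contained in $B(p,r)$. For each $t$ in the range where $a(t), b(t) \in B(p,r)$ and $a(t) \neq b(t)$, let $\gamma_t : [0,1] \to M$ denote the unique minimizing geodesic from $a(t)$ to $b(t)$; convexity together with smoothness of the exponential map give smooth dependence of $\gamma_t$ on $t$.

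Set $L(t) := |\dot\gamma_t|$ (constant in $s$ since $\gamma_t$ is a geodesic) and $E(t) := \tfrac12 L(t)^2 = \tfrac12\int_0^1 |\dot\gamma_t|^2\, ds$. Applying Lemma~\ref{oneillformula} with variation field $V(s) = \partial_t \gamma_t(s)$, so that $V(0) = \dot a(t)$ and $V(1) = \dot b(t)$, and using $\ddot\gamma_t \equiv 0$, I would obtain
\[
 E'(t) = g(\dot b(t), \dot\gamma_t(1)) - g(\dot a(t), \dot\gamma_t(0)).
\]
Next I would interpret these inner products geometrically. Let $\alpha(t), \beta(t)$ be the interior angles at $a(t)$ and $b(t)$, respectively, of the geodesic triangle with vertices $p, a(t), b(t)$. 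The direction from $a(t)$ back toward $p$ is $-\dot a(t)$, while $\dot\gamma_t(0)/L(t)$ is the unit tangent at $a(t)$ pointing toward $b(t)$; these enclose the angle $\alpha(t)$, so $g(\dot a(t),\dot\gamma_t(0)) = -L(t)\cos\alpha(t)$. A symmetric computation at $b(t)$ gives $g(\dot b(t),\dot\gamma_t(1)) = L(t)\cos\beta(t)$, and hence
\[
 E'(t) = L(t)\bigl(\cos\alpha(t) + \cos\beta(t)\bigr).
\]

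To conclude, Lemma~\ref{triangleLemma} applied to the triangle $\{p, a(t), b(t)\} \subset B(p,r)$ yields $\alpha(t)+\beta(t) < \pi$, and the identity $\cos\alpha+\cos\beta = 2\cos\tfrac{\alpha+\beta}{2}\cos\tfrac{\alpha-\beta}{2}$ shows $\cos\alpha(t)+\cos\beta(t) > 0$ since both factors are strictly positive when $\alpha,\beta \in [0,\pi]$ and $\alpha+\beta < \pi$. Thus $E'(t) > 0$ whenever $L(t) > 0$, and from $E = \tfrac12 L^2$ this forces $L'(t) > 0$ for all $t > 0$ in the relevant range. Combined with continuity and $L(0) = 0$, this yields strict monotonicity of $\dist(a(t), b(t))$ on an interval $[0, t_0]$.

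The main obstacle I expect is the sign bookkeeping in the second step: correctly identifying $g(\dot a,\dot\gamma_t(0))$ and $g(\dot b,\dot\gamma_t(1))$ with $\mp L\cos$ of the triangle's interior angles, keeping in mind that $\dot\gamma_t$ has magnitude $L(t)$ (not one) and that the interior angles are measured between the geodesic segments as they emerge from the respective vertices. Smooth dependence of $\gamma_t$ on $t$ is a secondary technical point but follows routinely from geodesic convexity of $B(p,r)$; once these details are in place, Lemma~\ref{triangleLemma} together with the sum-to-product trig identity closes the argument.
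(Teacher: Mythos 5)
Your proof is correct and follows essentially the same route as the paper's: differentiate the energy of the connecting geodesics via Lemma~\ref{oneillformula}, kill the integral term because $\gamma_t$ is a geodesic, and deduce positivity of the derivative from the angle bound of Lemma~\ref{triangleLemma}. Your explicit sign bookkeeping and the sum-to-product identity make the final positivity step cleaner than the paper's phrasing, but the underlying argument is identical.
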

\begin{proof}
 Let $\gamma_t : [0,1] \to M$ denote the geodesic between $a(t)$ and $b(t)$ for any $t\in [0,1]$. Then $|\dot{\gamma_t}|$ is equal to the length of this geodesic since $\gamma_t$ has constant speed. Hence, it holds that
\begin{equation}\label{variationGeodesic}
\dist(a(t),b(t))^2 = \int_0^1 |\dot{\gamma_t}(s)|^2 ds.
\end{equation}
We wish to differentiate this expression at $t = t_0$ and prove that the derivative is positive when $a(t)$ and $b(t)$ are in a small enough ball $B(p,r)$. To calculate the derivative of the expression \eqref{variationGeodesic} with respect to $t$, we apply Lemma \ref{oneillformula} and see that all but the last term on the right hand side of \eqref{oneillequation} vanish due to geodesics having constant speed. Recalling that $g$ denotes the Riemannian metric on $M$, we obtain the following formula for any small enough $t\geq 0$
\[
\left. \frac{d}{d t}\dist(a(t),b(t))^2 \right\rvert_{t = t_0} = g(\dot{a}(t_0),\dot{\gamma}_{t_0}(0)) - g(\dot{b}(t_0),\dot{\gamma}_{t_0}(1)).
\]
Since $a$ and $b$ have unit speed, the expression on the right hand side is nonnegative if the angle between the geodesics $a$ and $\gamma_{t_0}$ is greater or equal to the angle between $b$ and $\gamma_{t_0}$. For this purpose, let us denote by $q_1 = a(t_0)$ and $q_2 = b(t_0)$ the respective intersection points between these geodesics. Consider now the geodesic triangle $T := \Delta p q_1 q_2$ whose angles we denote by $\theta_1, \theta_2$ and $\theta_3$. Our claim is hence equivalent with the inequality
\begin{equation*}
\pi - \theta_2 > \theta_1,
\end{equation*}
which follows directly from Lemma \ref{triangleLemma}.
\end{proof}
We are now in a position to prove the main result of this appendix.
\begin{proof}[Proof of Theorem \ref{contraction}]
Let $B_{r_0}:= B(p,r_0)$ denote a small geodesic ball centered at $p$. Let first $\psi:M\setminus B_{r_0}\to \bar{B_{r_0}}$ denote any Lipschitz map which is the identity map on the boundary. Let $r < r_0$ be a small radius. We now let $\tau : B_{r_0} \to B_r$ be a contraction along geodesics emanating from $p$. In other words, if $\tau_0(v) = \frac{r}{r_0} v$ denotes a contraction map from $T_pM$ onto itself then
\[
\tau(q) = \exp_p\left(\tau_0\left(\exp_p^{-1}(q)\right)\right), \qquad q \in B_{r_0}.
\]
From this representation and the smoothness of the exponential map, its inverse and $\tau_0$ we infer that the Lipschitz constant of $\tau$ may be chosen as small as we wish by choosing $r > 0$ small. Let us now define $\Psi$ as follows
\begin{enumerate}
\item{On $\bar{B_r}$, let $\Psi$ be the identity map.}
\item{On $M \setminus B_{r_0}$, let $\Psi = \tau \circ \psi$. Hence $\Psi$ is a local contraction here for $r$ small enough.}
\item{On $B_{r_0} \setminus \bar{B_r}$, define $\Psi(q)$ as follows. Let $\gamma$ denote the unique geodesic passing through $p$ and $q$. Define $\Psi(q)$ as the intersection point of $\gamma$ and $\partial B_r$.}
\end{enumerate}
The only remaining thing to show is that $\Psi$ is a local contraction on $V := B_{r_0} \setminus \bar{B_r}$. For this, let $q_1$ and $q_2$ be two points in $V$. Let $p_1 = \Psi(q_1)$ and $p_2 = \Psi(q_2)$ denote their images on $\partial B_r$. Our aim is to show that $\dist(p_1,p_2) < \dist(q_1,q_2)$.

Suppose without loss of generality that $\dist(q_1,p) \leq \dist(q_2,p)$. Let $q_3$ denote a point on the geodesic segment between $q_2$ and $p$ such that $\dist(q_3,p) = \dist(q_1,p)$. We claim that $\dist(q_1,q_3) \leq \dist(q_1,q_2)$.

We claim that the angle $\angle q_1 q_3 q_2 > \pi/2$ (by angle between points, we mean the angle between respective geodesics). By the Gauss lemma, the angle between the geodesic circle $\partial B(p,\dist(q_1,p))$ and the geodesic from $p$ to $q_3$ is $\pi/2$. The geodesic from $q_1$ to $q_3$ lies inside the ball $B(p,\dist(q_1,p))$ by convexity, which means that the angle $\angle q_1 q_3 q_2$ must be strictly larger than $\pi/2$.

We now note that the circle with center $q_1$ passing through the point $q_3$ does not intersect with the geodesic segment between $q_3$ and $q_2$. This is due to the fact that $\angle q_1 q_3 q_2 > \pi/2$ and another application of the Gauss lemma and Lemma \ref{tangentlemma}. Hence this circle intersects the segment between $q_1$ and $q_2$ at some point $q$. Now $\dist(q_1,q_3) = \dist(q_1,q) \leq \dist(q_1,q_2)$ as we wanted to prove.

Let now $a(t)$ and $b(t)$ denote unit speed geodesics passing through $p$ and the points $q_1$ and $q_3$ respectively. Hence there are $t_1,t_2$ with $t_1 \leq t_2$ such that $a(t_1) = p_1, b(t_1) = p_2$ and $a(t_2) = q_1, b(t_2) = q_3$.
By Lemma \ref{geodesiclemma}, we find that if $r_0$ is small enough then $\dist(p_1,p_2) < \dist(q_1,q_3)$. Since $\dist(q_1,q_3) \leq \dist(q_1,q_2)$, we have shown that $\Psi$ is a contraction as wanted.
\end{proof}

\end{document}